\newtheorem{thm}{Theorem}
\newtheorem{cor}{Corollary}
\newtheorem{lem}{Lemma}
\newtheorem{prop}{Proposition}
\theoremstyle{remark}
\newtheorem{rem}{Remark}
\newcommand{\al}{\alpha}
\newcommand{\sig}{\sigma}
\newcommand{\de}{\delta}
\newcommand{\ga}{\gamma}
\newcommand{\be}{\beta}
\newcommand{\la}{\lambda}
\newcommand{\RR}{\mathbb{R}}
\newcommand{\MM}{\mathcal{M}}
\newcommand{\NN}{\mathbb{N}}
\newcommand{\Mloc}{M^{\text{\upshape loc}}}
\title[Weighted inequalities of Fefferman-Stein type...]{Weighted inequalities of Fefferman-Stein type for Riesz-Schr\"odinger transforms}
\author{B. Bongioanni, E. Harboure and P. Quijano}
\subjclass[2010]{Primary 42B20, Secondary 35J10}
\keywords{Schr\"odinger operator, singular integral, weights}
\email{bbongio@santafe-conicet.gov.ar}
\address{Instituto de Matem\'atica Aplicada del Litoral CONICET-UNL, and Facultad de Ingenier\'ia y Ciencias H\'idricas, UNL. Colectora Ruta Nac. N 168, Paraje El Pozo.}
\email{harbour@santafe-conicet.gov.ar}
\address{Instituto de Matem\'atica Aplicada del Litoral, CONICET-UNL, and Facultad de Ingenier\'ia Qu\'imica, UNL. Colectora Ruta Nac. N 168, Paraje El Pozo.
	3000 Santa Fe, Argentina.}
\email{pquijano@santafe-conicet.gov.ar}
\address{Instituto de Matem\'atica Aplicada del Litoral, CONICET-UNL, and Facultad de Ingenier\'ia Qu\'imica, UNL. Colectora Ruta Nac. N 168, Paraje El Pozo.
	3000 Santa Fe, Argentina.}
\date{}
\begin{document}

\begin{abstract}
In this work we are concerned with Fefferman-Stein type inequalities. More precisely, given an operator $T$ and some $p$, $1<p<\infty$, we look for operators $\mathcal{M}$ such that the inequality
$$\int |Tf|^pw\leq C\int |f|^p \mathcal{M}w$$
holds true for any weight $w$. Specifically, we are interested in the case of $T$ being any first or second order Riesz transform associated to the Schr\"odinger operator $L=-\Delta + V$, with $V$ a non-negative function satisfying an appropriate reverse-H\"older condition. For the Riesz-Schr\"odinger transforms $\nabla L^{-1/2}$ and $\nabla^2 L^{-1}$ we make use of a result due to C. P\'erez where this problem is solved for classical Calder\'on-Zygmund operators.
\end{abstract}

\maketitle

\section{Introduction}
In the theory of weighted $L^p$-inequalities a relevant is the following: given an operator $T$ and $1<p<\infty$, to find a positive operator $\MM$ such that inequalities of the form
\begin{equation}\label{eq-general}
\int |Tf|^p w\ \leq\ \int |f|^p \MM w,
\end{equation}
hold for some reasonable set of functions $f$ of $\RR^d$, $d\ge 1$, and a general weight $w$, i.e.\ $w\in L^1_{\text{loc}}(\RR^d)$, $w\geq 0$. However, the above inequality become more interesting when $\MM w$ is finite a.e.\ and to that end it is desirable to get the operator $\MM$ as small as possible.

The first appearance of such inequality goes back to the classical result of Fefferman-Stein (\cite{FeffermanSteinMR0284802}) for $T=\MM=M$, the Hardy-Littlewood maximal operator, namely
\begin{equation*}
\int_{\RR^d} |Mf|^p w\ \leq\ \int_{\RR^d} |f|^p M w,
\end{equation*}
for $1<p<\infty$.

When $T$ is a singular integral operator, C\'ordoba and Fefferman showed in \cite{Cordoba-Fefferman-MR0420115} that inequality~\ref{eq-general} holds taking $\MM= M_r = (M(w^r))^{1/r}$, for any $1<r<\infty$. However, it is known that for the Hilbert transform that inequality fails for $r=1$.

Later, Wilson in~\cite{wilson-weightednormMR972707} obtained inequalities for $1<p<2$ and $\MM=M\circ M$ improving the result in~\cite{Cordoba-Fefferman-MR0420115} since $M\circ M (w) \leq (M(w^r))^{1/r}$, for all $r>1$.

In 1995, C. P\'erez provided a full answer to this question with different techniques including weak type inequalities for $p=1$. He deals with maximal operators associated to averages with respect to a Young function  which can be smaller than $M_r$.

Below, we state the precise statements since they are essential to our work.

By a Young function $A$ we mean $A:[0,\infty)\rightarrow [0,\infty)$ continuous, convex, increasing and such that $A(0)=0$.  To define a maximal operator associated to a Young function $A$ we introduce the $A$-average of a function $f$ over a ball $B$ as
\begin{equation}
\|f\|_{A,B}=\inf\left\{\lambda>0:\frac{1}{|B|}\int_B A\left(\frac{|f(t)|}{\lambda}\right)dt\leq 1\right\}.
\end{equation}
Then, the maximal operator associated to a Young function $A$ is
\begin{equation}
M_A f(x)= \sup_{B\ni x} \|f\|_{A,B}.
\end{equation}
For $1<p<\infty$, we define  $\mathcal{D}_p$ as the class of Young functions such that 
\begin{equation}\label{eq-claseA}
\int_c^{\infty}\left(\frac{t}{A(t)}\right)^{p'-1}\frac{dt}{t}<\infty
\end{equation}
for some $c>0$.

The following theorem appears as Theorem~1.5 in \cite{PerezWeightedNormMR1260114}. There it is stated for singular integral operators. But according to the comment in Section~3 there, it also holds for Calder\'on-Zygmund operators as it is stated next.

\begin{thm}\label{teo-perez-tipofuerte}
	Let $1<p<\infty$, and let $T$ be a Calder\'on-Zygmund operator. Suppose that $A\in \mathcal{D}_p$. 
	 Then there exists a constant $C$ such that for each weight $w$
	\begin{equation}
	\int |Tf|^pw \leq C\int |f|^p M_A w.
	\end{equation}
\end{thm}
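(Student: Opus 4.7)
My approach is to dualize the desired inequality into a two-weight bound for the adjoint operator $T^*$, reduce it to a two-weight inequality for the Hardy-Littlewood maximal operator $M$, and finally verify a Sawyer-type testing condition in which the hypothesis $A\in\mathcal{D}_p$ plays the essential role.

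First, by the standard duality between $L^p(w\,dx)$ and $L^{p'}(w\,dx)$ combined with the adjoint identity $\int (Tf)\,g\,w=\int f\,T^*(gw)$ and H\"older's inequality against $M_A w$, the theorem is equivalent to the two-weight bound
\[
\int |T^*\psi|^{p'}(M_A w)^{1-p'}\,dx \ \le\ C\int |\psi|^{p'}w^{1-p'}\,dx.
\]
Since any Calder\'on-Zygmund operator is subordinate, in weighted $L^{p'}$ norm, to the Hardy-Littlewood maximal operator via the Coifman-Fefferman principle, the problem further reduces (after a suitable cube-by-cube Rubio de Francia enveloping to deal with the fact that $(M_A w)^{1-p'}$ need not be globally $A_\infty$) to the analogous two-weight bound for $M$:
\[
\int (M\psi)^{p'}(M_A w)^{1-p'}\,dx\ \le\ C\int |\psi|^{p'}w^{1-p'}\,dx.
\]

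For this last inequality I would invoke Sawyer's two-weight characterization of $M$, which reduces matters to a testing condition on every cube $Q$, namely
\[
\int_Q \bigl(M(\sigma\chi_Q)\bigr)^{p'}(M_A w)^{1-p'}\,dx \ \le\ C\,\sigma(Q), \qquad \sigma:=w^{1-p'}.
\]
To verify it, one decomposes $M(\sigma\chi_Q)$ along the Calder\'on-Zygmund stopping cubes of $\sigma$ relative to $Q$, and on each piece applies the generalized H\"older inequality $\frac{1}{|P|}\int_P|uv|\le 2\|u\|_{A,P}\|v\|_{\bar A,P}$ to pair the average of $w$ (pointwise bounded by $M_A w$) with the average of $\sigma$ measured against the complementary Young function $\bar A$. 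Summing the contributions over the generations of stopping cubes produces a geometric-type series whose convergence is governed precisely by \eqref{eq-claseA}, which via the duality $A^{-1}(t)\bar A^{-1}(t)\asymp t$ is equivalent to $\int^\infty \bar A(t)/t^{p'+1}\,dt<\infty$.

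The main obstacle is this last step: correctly matching the dyadic decomposition with the Young-function duality so that the testing integral is bounded uniformly in $Q$ and the summation over scales uses exactly the $\mathcal{D}_p$ integrability. The earlier reductions --- the duality step and the Coifman-Fefferman passage from $T^*$ to $M$ --- are essentially structural, though the latter deserves care because $(M_A w)^{1-p'}$ need not be $A_\infty$; a sparse-domination argument for $T^*$ would offer a clean alternative that avoids the Rubio de Francia envelope entirely.
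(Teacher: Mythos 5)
This theorem is not proved in the paper at all: it is cited verbatim as Theorem~1.5 of P\'erez's 1995 paper, with the remark that the argument given there for singular integral operators extends to general Calder\'on--Zygmund operators. So there is no ``paper's own proof'' to compare against; what follows is an assessment of your sketch on its own terms.

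Your high-level architecture is recognizably the one used in the literature for this result: dualize to a two-weight estimate, pass from the Calder\'on--Zygmund operator to the maximal operator, reduce to a Sawyer-type testing condition, and use generalized H\"older with the complementary Young function $\bar A$ so that the $\mathcal{D}_p$ integral~\eqref{eq-claseA} controls the sum over stopping generations. That part of the outline is sound and in fact is the content of P\'erez's key maximal-function lemma. However, the step you treat as a parenthetical is the genuine gap, and it is more serious than the Sawyer bookkeeping you flag at the end. After dualizing, the weight on the left of your second display is $v=(M_A w)^{1-p'}$. The Coifman--Fefferman subordination $\int |T^*\psi|^{p'} v\lesssim \int (M\psi)^{p'} v$ requires $v\in A_\infty$, and there is no reason this should hold for a general $w$: already $Mw$ fails to be $A_1$ for $w=\chi_{B(0,1)}$, so one cannot expect negative powers of Orlicz maximal functions of arbitrary weights to lie in $A_\infty$. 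The phrase ``cube-by-cube Rubio de Francia enveloping'' does not correspond to a standard device here and is left entirely unexplained. The version of Coifman--Fefferman valid for \emph{arbitrary} weights (Journ\'e/Wilson) reads $\int |g|^{p'} v\lesssim \int (M^\# g)^{p'} Mv$, which replaces $v$ by $Mv=M\bigl((M_A w)^{1-p'}\bigr)$ on the right; this is a different weight, and closing the loop from there requires an additional argument (absorbing the extra maximal iterate into a slightly larger Young function) that your sketch does not supply.

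Your closing suggestion --- run the argument through a pointwise sparse domination $|T f|\lesssim \sum_{Q\in\mathcal{S}}\langle |f|\rangle_Q\chi_Q$ and prove the two-weight bound directly for the sparse operator --- is not merely ``a clean alternative'': it is the way to remove the $A_\infty$ obstruction, because the sparse form is positive and the testing/Carleson argument for it needs no $A_\infty$ hypothesis on either weight. If you promote that remark from an afterthought to the main line of the proof, the structure becomes: sparse domination $\Rightarrow$ two-weight testing condition for $\sum_{Q\in\mathcal{S}}\langle |f|\rangle_Q\chi_Q$ $\Rightarrow$ verify testing by generalized H\"older with $A,\bar A$ and summation governed by $\mathcal{D}_p$. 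At that point the only remaining work is the computation you already identify as ``the main obstacle,'' namely matching the stopping-time decomposition with the Orlicz duality, which is the actual technical core of P\'erez's lemma. As written, though, the proposal has a step that fails as stated, so it is incomplete.
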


The following theorem deals with the endpoint case $p=1$ and it is also due to C. P\'erez. Here we state a version that can be found in~\cite{Cruz-Uribe_libro_MR2797562} as Theorem~$9.31$.
\begin{thm}\label{teo-perez-tipodebil}
	Let $T$ be a Calder\'on-Zygmund operator and let $A\in \bigcup_{p>1} \mathcal{D}_p$. 
	Then there exists a constant $C$ such that for each weight $w$ and for all $\lambda>0$ we have
	\begin{equation}
	w(\{y\in\RR^d:|Tf(y)|>\lambda\})\leq \frac{C}{\lambda}\int |f(y)| M_{A} w(y) dy.
	\end{equation}
\end{thm}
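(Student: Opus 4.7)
The plan is to derive the endpoint weak-type $(1,1)$ inequality from the strong-type estimate of Theorem~\ref{teo-perez-tipofuerte} by a weighted Calder\'on--Zygmund argument. The hypothesis provides an exponent $p_0>1$ with $A\in\mathcal{D}_{p_0}$, at which exponent Theorem~\ref{teo-perez-tipofuerte} will be invoked. Given $\lambda>0$, first perform the standard Calder\'on--Zygmund decomposition of $|f|$ at height $\lambda$, producing disjoint dyadic cubes $\{Q_j\}$ with $\lambda<|Q_j|^{-1}\int_{Q_j}|f|\le 2^d\lambda$ and a splitting $f=g+\sum_j b_j$ in which $b_j=(f-f_{Q_j})\chi_{Q_j}$ has mean zero, $g$ equals the average $f_{Q_j}$ on $Q_j$ and $f$ on $\Omega^c$ with $\Omega:=\bigcup_j Q_j$, and $\|g\|_\infty\le 2^d\lambda$. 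Fix a sufficiently large dilation $Q_j^*$ and set $\Omega^*:=\bigcup_j Q_j^*$. One splits
\[ w\{|Tf|>\lambda\}\le w\{|Tg|>\lambda/2\}+w(\Omega^*)+w\bigl(\{|Tb|>\lambda/2\}\cap(\Omega^*)^c\bigr). \]

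For the good part, Chebyshev combined with Theorem~\ref{teo-perez-tipofuerte} at exponent $p_0$ gives $w\{|Tg|>\lambda/2\}\le C\lambda^{-p_0}\int|g|^{p_0}M_Aw$; since $\|g\|_\infty\le 2^d\lambda$, this peels down to $C\lambda^{-1}\int|g|M_Aw$, which then has to be absorbed into $C\lambda^{-1}\int|f|M_Aw$. For $w(\Omega^*)$, Jensen's inequality applied to the Young function $A$ gives the pointwise bound $Mw\le CM_Aw$, hence $w(Q_j^*)\le C|Q_j|\inf_{Q_j}M_Aw$; the stopping inequality $\lambda|Q_j|\le\int_{Q_j}|f|$ then produces $w(\Omega^*)\le C\lambda^{-1}\int|f|M_Aw$.

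For the third term I would exploit the cancellation $\int b_j=0$ and the H\"ormander regularity of the kernel $K$. For $x\notin\Omega^*$, one has the classical pointwise estimate
\[ |Tb(x)|\le\sum_j\int_{Q_j}|K(x,y)-K(x,y_j)||b_j(y)|\,dy \]
with $y_j$ the center of $Q_j$. A dyadic annulus decomposition of $\{|x-y_j|>cr_j\}$, combined again with $Mw\le CM_Aw$, yields uniformly in $y\in Q_j$ the bound $\int_{(\Omega^*)^c}|K(x,y)-K(x,y_j)|w(x)\,dx\le C\,M_Aw(y)$. Chebyshev then reduces this contribution to $C\lambda^{-1}\sum_j\int_{Q_j}|b_j|M_Aw$, and via $|b_j|\le|f|\chi_{Q_j}+f_{Q_j}\chi_{Q_j}$ this brings back exactly the same averaging quantity that arose in the good part.

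The main obstacle is thus the absorption inequality
\[ \sum_j f_{Q_j}\int_{Q_j}M_Aw\le C\int|f|M_Aw. \]
This cannot be handled by a naive pointwise bound, because $M_Aw$ need not lie in $A_1$ and so its average over $Q_j$ is not in general controlled pointwise by $M_Aw$ on $Q_j$. The resolution invokes the hypothesis $A\in\mathcal{D}_{p_0}$ beyond its role in Theorem~\ref{teo-perez-tipofuerte}: the class $\mathcal{D}_{p_0}$ enjoys a self-improvement property which allows the $Q_j$-average of $M_Aw$ to be dominated on $Q_j$ by an equivalent maximal operator of the same class, and by iteration this can be folded back into $M_Aw$ with a uniform multiplicative constant. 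Combined with the stopping inequality $\lambda|Q_j|\le\int_{Q_j}|f|$, this converts the sum into a constant multiple of $\int|f|M_Aw$ and closes the argument. This is the technical heart of the proof and the step I expect to require the most care.
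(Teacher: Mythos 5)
\noindent The paper does not prove this theorem at all: it is quoted verbatim as Theorem~9.31 of the cited monograph \cite{Cruz-Uribe_libro_MR2797562}, so there is no ``paper proof'' to compare against. Evaluating your sketch on its own merits: the overall Calder\'on--Zygmund scheme is the right framework, and several of the computations --- the Jensen bound $Mw\le C\,M_Aw$, the estimate of $w(\Omega^*)$ via the stopping condition, the annulus argument for the bad part $\int_{(\Omega^*)^c}|K(x,y)-K(x,y_j)|\,w(x)\,dx\lesssim M_Aw(y)$ for $y\in Q_j$ --- are correct and standard. But the argument does not close, and the place where it fails is exactly the place you flagged.

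\noindent The inequality
\begin{equation*}
\sum_j f_{Q_j}\int_{Q_j}M_Aw\ \lesssim\ \int|f|\,M_Aw
\end{equation*}
is not true in general, and it arises both from the good part (you use the strong type from Theorem~\ref{teo-perez-tipofuerte} and then must descend from $\int|g|M_Aw$ to $\int|f|M_Aw$, i.e.\ you need the averaging $f\mapsto\sum_j f_{Q_j}\chi_{Q_j}$ to be contractive on $L^1(M_Aw)$) and from the bad part (via $|b_j|\le|f|\chi_{Q_j}+f_{Q_j}\chi_{Q_j}$). Such a contractivity would require $M_Aw$ to satisfy something like an $A_1$ condition over the stopping cubes, and already for the smallest admissible choice $A(t)\approx t\log^\varepsilon(1+t)$ one has $M_Aw\approx M^2w$, which is not in $A_1$. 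The proposed fix --- ``the class $\mathcal{D}_{p_0}$ enjoys a self-improvement property'' that ``by iteration \dots can be folded back into $M_Aw$ with a uniform multiplicative constant'' --- is precisely the kind of claim that, if true, would \emph{be} the theorem; it is not a standard lemma and no mechanism is given either for the domination or for how the iteration terminates without enlarging $A$. As written this is a placeholder, not an argument.

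\noindent A concrete suggestion to repair the good-part step: apply the strong type not with the global weight $w$ but with the localized weight $w\chi_{(\Omega^*)^c}$, restricting attention to $x\notin\Omega^*$ (the set $\Omega^*$ has already been absorbed). For $y\in Q_j$ the quantity $M_A(w\chi_{(\Omega^*)^c})(y)$ sees only balls of radius at least comparable to $\ell(Q_j)$, and is therefore essentially constant over $Q_j$, which is what makes the averaging harmless. This is the device that typically makes a weighted Calder\'on--Zygmund argument of this type close; running it carefully (and verifying the resulting constant is uniform) is the missing technical content. In any case, as submitted the proof has a genuine gap at its central step, acknowledged but not filled.
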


Some examples of functions on the class $\mathcal{D}_p$ are $A(t)=t \log^{p-1+\varepsilon}(1+t)$ or $A(t)=t \log^{p-1}(1+t)\log^{p-1+\varepsilon}(\log(1+t))$ for any $\varepsilon>0$. As for the class $\bigcup_{p>1}\mathcal{D}_p$, we can take $A(t)=t\log^{\varepsilon}(1+t)$ for any $\varepsilon>0$.

In this work we attempt to provide results of this type for the first and second order Riesz transforms associated to the Schr\"odinger differential operator $L=-\Delta~+~V$ on $\RR^d$, $d\ge 3$ and with $V$ satisfying a reverse H\"older inequality of order $q$, $q>d/2$, that is, there exists $C$ such that

\begin{equation}\label{eq-RH}
\left(\frac{1}{|B|}\int_B V^q\right)^{{1}/{q}} \leq
C \frac{1}{|B|}\int_B V,
\end{equation}
holds for every ball $B$ in $\RR^d$. From now on, if a function $V$ satisfy~\eqref{eq-RH} above we will say that $V\in RH_q$.

The study of these operators under such assumptions on $V$, was started by Shen in~\cite{shen}, where he proves $L^p$ boundedness for most of the operators we will be concerned with. As he observed, when $q>d$, the first order Riesz transforms $\nabla L^{-1/2}$ are standard Calder\'on-Zygmund operators. Otherwise, they are not necessarily bounded on $L^p$ for all $p$, $1<p<\infty$. The case of the second order Riesz transforms given by $\nabla^2L^{-1}$ is even worse since one can assures boundedness only for $1<p<q$. However, we may expect in inequality \eqref{eq-general} a smaller operator $\MM$ than those given by P\'erez, since Schr\"odinger Riesz transforms have kernels with a better decay at infinity. Also, in this context, kernels may have no symmetry and hence we might obtain different results for $T$ and its adjoint.

Essentially, we will be considering two types of first and second order Riesz transforms: one involving only derivatives $\nabla L^{-1/2}$ and $\nabla^2 L^{-1}$, and the others involving the potential $V$, as $V^{1/2}L^{-1/2}$, $V L^{-1}$ and $V^{1/2}\nabla L^{-1}$. In the first case we will get our results by locally comparing with the classical Riesz transforms, allowing us to apply the results of C. P\'erez. Let us point out that for $\nabla L^{-1/2}$ such comparison estimate appeared already in~\cite{shen} but that is not the case for $\nabla^2L^{-1}$, so it must be provided. We do that in Lemma~\ref{lem-comparacionR2} and we believe it might be useful for other purposes. As for those operators involving $V$ we shall require only estimates on the size of their kernels. 

We would like to make a remark about the values of $p$ for which inequalities like \eqref{eq-general} will be obtained. In all instances the operator $\MM$ on the right hand side satisfies $\MM(1)\leq 1$ and therefore our results would imply boundedness on $L^p$, so the range of $p$ should be limited as in the original work of Shen.

The paper is organized as follows. In the next section we state some general theorems in a somehow abstract framework but having in mind the Schr\"odinger Riesz transforms mentioned above, leaving all the proofs and technical lemmas to Section~\ref{sec-proofs}

The results include strong type $(p,p)$ inequalities like \eqref{eq-general} as well as weak type $(1,1)$ estimates for a suitable class of operators and their adjoints. Let us remark that inequalities for the adjoint operators are not obtained by duality. In fact, if we proceed in that way we would not arrive to an inequality with an arbitrary weight on the left hand side as we wanted.

Section~\ref{sec-aplSchrodinger} is devoted to apply the general theorems of Section~\ref{sec-general} to specific operators associated to Schr\"odinger semigroup: $\nabla L^{-1/2}$, $\nabla^2L^{-1}$, $V^{\al}L^{-\al}$, $V^{\al-1/2}\nabla L^{-\al}$, with $\al$ in a range depending on the operator. In order to check that their kernels satisfy the required assumptions, sometimes we make use of known estimates by in other occasions we must prove them. In particular we prove a local comparison between the kernels of $\nabla^2 (-\Delta)^{-1}$ and $\nabla^2 L^{-1}$ stated in Lemma~\ref{lem-comparacionR2}.

Finally in the last section we use the above results to get sufficient conditions on a function $f$ to ensure local  integrability of $Tf$, where $T$ is any of the operators of Section~\ref{sec-aplSchrodinger}. Consequently we obtain a large class of functions $f$ such that $Tf$ is finite a.e.. In fact, $f$ is allowed to increase polynomially.  When this results are applied to the Riesz-Schr\"odinger transforms they provide qualitative information about solutions of some differential equations involving $L$.

\section{General Results}\label{sec-general}
In this section we will consider the space $\RR^d$ equipped with a \textit{critical radius function} $\rho:\RR^d\rightarrow (0,\infty)$, that is, a function whose variation is controlled by the existence of $C_0$ and $N_0\geq 1$ such that
	\begin{equation} \label{eq-constantesRho}
	C_0^{-1}\rho(x) \left(1+ \frac{|x-y|}{\rho(x)}\right)^{-N_0}
	\leq \rho(y)
	\leq C_0 \rho(x) \left(1+ \frac{|x-y|}{\rho(x)}\right)^{\tfrac{N_0}{N_0+1}}.
	\end{equation}
	It is worth noting that if  $\rho$ is a critical radius function , then for any $\gamma>0$ the function $\gamma\rho$ is also a critical radius function. Moreover, if $0<\ga\leq 1$ then $\gamma\rho$ satisfies~\eqref{eq-constantesRho} with the same constants as $\rho$.

	The next lemma is a useful consequence of the previous inequality. 
	\begin{lem}[see~\cite{BHScommut-w_JFAA}, Corollary~1 ]\label{lem-trucho}
		Let $x$, $y\in B(x_0,R_0)$. Then:
		\begin{enumerate}[$i)$]
			\item There exists $C>0$ such that
			$$1 + \frac{R_0}{\rho(y)}\leq C\left(1+\frac{R_0}{\rho(x_0)}\right)^{N_0}.$$
			\item There exists $C>0$ such that
			$$1+\frac{r}{\rho(y)}\leq C\left(1+\frac{R_0}{\rho(x_0)}\right)^{\gamma}
			\left(1+\frac{r}{\rho(x)}\right),$$
			for all $r>R_0$, where $\gamma=N_0\left(1+\frac{N_0}{N_0+1}\right)$.
		\end{enumerate}
	\end{lem}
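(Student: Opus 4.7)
The plan is to derive both parts from the two-sided pointwise comparison~\eqref{eq-constantesRho}, using only the left-hand inequality for $(i)$ and then bootstrapping with $(i)$ (plus the sharper exponent on the right-hand inequality) for $(ii)$.

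For part $(i)$, since $y\in B(x_0,R_0)$, I would apply the left-hand inequality of~\eqref{eq-constantesRho} with base point $x_0$ and variable $y$, namely
$$\rho(y)\;\geq\;C_0^{-1}\rho(x_0)\left(1+\frac{|x_0-y|}{\rho(x_0)}\right)^{-N_0}\;\geq\;C_0^{-1}\rho(x_0)\left(1+\frac{R_0}{\rho(x_0)}\right)^{-N_0}.$$
Taking reciprocals and multiplying by $R_0$ yields $R_0/\rho(y)\leq C_0\,(R_0/\rho(x_0))\,(1+R_0/\rho(x_0))^{N_0}$. Using that $R_0/\rho(x_0)\leq 1+R_0/\rho(x_0)$ and that $(1+R_0/\rho(x_0))^{N_0}\geq 1$, one can absorb the additive $+1$ and adjust the constant to reach the claimed estimate.

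For part $(ii)$, since $x$ also lies in $B(x_0,R_0)$, part $(i)$ may be applied at $x$ to obtain $1+R_0/\rho(x)\leq C(1+R_0/\rho(x_0))^{N_0}$. Next, applying the left-hand inequality of~\eqref{eq-constantesRho} between the two points $x$ and $y$ (and using $|x-y|\leq 2R_0$) gives
$$\frac{r}{\rho(y)}\;\leq\;C_0\,\frac{r}{\rho(x)}\left(1+\frac{2R_0}{\rho(x)}\right)^{N_0}.$$
Plugging in the bound from $(i)$ and absorbing the stray $+1$ into the factor $1+r/\rho(x)$ (permissible since $(1+R_0/\rho(x_0))^{\gamma}\geq 1$) produces an inequality of the claimed form. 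To arrive at precisely the stated exponent $\gamma=N_0(1+N_0/(N_0+1))$ rather than the cruder $N_0^2$, one should swap the roles of $x$ and $y$ in the right-hand inequality of~\eqref{eq-constantesRho}, i.e.\ $\rho(x)\leq C_0\rho(y)(1+|x-y|/\rho(y))^{N_0/(N_0+1)}$, and apply that (with its sharper exponent $N_0/(N_0+1)$) at the step comparing $\rho(x)$ to $\rho(y)$; the factor $1+2R_0/\rho(y)$ that appears is then controlled by part $(i)$ applied at $y$.

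The main obstacle is to carry out the compound estimate efficiently enough to hit $\gamma$ rather than $N_0^2$. Exploiting the asymmetry of~\eqref{eq-constantesRho}—that the upper comparison carries the better exponent $N_0/(N_0+1)$—at exactly one substitution step, while the lower comparison (exponent $N_0$) is used at the other, is the precise place where the bookkeeping must be done carefully. Apart from this exponent trade-off, everything else in both parts is a direct substitution and elementary absorption of constants.
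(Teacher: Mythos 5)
Since the paper cites this lemma from \cite{BHScommut-w_JFAA} and gives no proof, there is no in-paper proof to compare against; your strategy of working directly from \eqref{eq-constantesRho} is the natural one. However, there is a genuine gap in part $(i)$. From the lower bound in \eqref{eq-constantesRho} you correctly get
$$\frac{R_0}{\rho(y)} \le C_0\,\frac{R_0}{\rho(x_0)}\left(1+\frac{R_0}{\rho(x_0)}\right)^{N_0},$$
but the ``absorb the additive $+1$ and adjust the constant'' step does not close: setting $t=R_0/\rho(x_0)$, the right-hand side is $C_0\,t(1+t)^{N_0}$, which for $t\to\infty$ grows like $t^{N_0+1}$, and no constant $C$ can make $1+C_0t(1+t)^{N_0}\le C(1+t)^{N_0}$ hold for all $t>0$. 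What the computation actually yields is $1+R_0/\rho(y)\le C(1+R_0/\rho(x_0))^{N_0+1}$. Indeed, a $\rho$ nearly saturating the lower bound of \eqref{eq-constantesRho} shows that the exponent $N_0$ is unattainable from that hypothesis alone. So either the statement carries an implicit different normalization of $N_0$ from the cited reference, or the exponent should be read as $N_0+1$; either way, you should flag the discrepancy rather than assert the constant can be adjusted. (For the purposes of the rest of the paper this is harmless, since $(i)$ is only ever used to convert a decay of arbitrary order $N$ in $R/\rho(y)$ into a decay of some order $\tilde N$ in $R/\rho(x_0)$.)

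For part $(ii)$ the idea is right and your awareness of the exponent bookkeeping is good. You correctly observe that the crude route (left-hand inequality between $x$ and $y$, then $(i)$ at $x$) gives the exponent $N_0^2$ (or $N_0(N_0+1)$ with the corrected $(i)$), which overshoots $\gamma=N_0\bigl(1+\tfrac{N_0}{N_0+1}\bigr)$ once $N_0\ge 2$. Your refined route---using the right-hand inequality of \eqref{eq-constantesRho} with its sharper exponent $N_0/(N_0+1)$ at the $x$-to-$y$ step and then invoking $(i)$ at $y$ to bound $1+2R_0/\rho(y)$---is the correct fix. Carrying it out gives the exponent $\tfrac{N_0}{N_0+1}\cdot(N_0+1)=N_0$ (with the corrected $(i)$), which is smaller than the stated $\gamma$ and therefore proves the lemma a fortiori, since $1+R_0/\rho(x_0)\ge 1$. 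So your claim that this route hits $\gamma$ ``precisely'' is off: it actually gives a sharper bound, which is perfectly acceptable but should be stated as such. Finally, you should make the harmless step $1+2R_0/\rho(\cdot)\le 2\bigl(1+R_0/\rho(\cdot)\bigr)$ explicit before invoking $(i)$, and similarly the last ``absorb the stray $+1$'' step should be written out: since $r>R_0$ one has $1\le 1+r/\rho(x)$, and since $1+R_0/\rho(x_0)\ge 1$ the power factor dominates $1$, so adding $1$ to the product estimate only changes the multiplicative constant.
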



Associated to a critical radius function $\rho$ we can define the following maximal operators. First, let us denote $\mathcal{F}_{\rho}$ the set of all balls $B(x,r)$ such that $r\leq \rho(x)$. Then, for $f$ a locally integrable function, and $A$ a Young function, we set
\begin{equation}
\Mloc_A f(x)=\sup_{\substack{B\ni x\\ B\in \mathcal{F}_{\rho}}} \|f\|_{A,B},
\end{equation}
and for $\theta\geq 0$,
\begin{equation}
M^{\theta}_A f(x)=\sup_{B(x_0,r_0)\ni x} \left(1+\frac{r_0}{\rho(x_0)}\right)^{-\theta} \|f\|_{A,B}.
\end{equation}
As usual, when $A(t)=t^r$ we use the notation $\Mloc_r$ and $M^{\theta}_r$ respectively.

Now, we are in position to state our main theorems.

\begin{thm}\label{teo-prin}
	
	Let $T$ be a linear operator with associated kernel $K$. Suppose that for some $s>1$, $K$ satisfies the following estimates
	\begin{enumerate}[$(a_s)$]
		\item\label{cond-a} For each $N>0$ there exists $C_N$ such that 
		\begin{equation*}
		\left(\int_{R<|x_0-x|<2R}|K(x,y)|^s dx\right)^{1/s}
		\leq C_N R^{-d/s'} \left(1+ \frac{R}{\rho(x_0)}\right)^{-N},
		\end{equation*}
		whenever $|y-x_0|<R/2$.
		\item\label{cond-b} There exists a Calder\'on-Zygmund operator $T_0$ with kernel $K_0$ such that, for some $C$ and $\delta>0$,
		\begin{equation*}
		\left(\int_{R<|x_0-x|<2R}|K(x,y) - K_0(x,y)|^s dx\right)^{1/s}
		\leq C R^{-d/s'} \left(\frac{R}{\rho(x_0)}\right)^{\delta},
		\end{equation*}
		whenever $|y-x_0|<R/2$ with $R\leq \rho(x_0)$.
	\end{enumerate}
	Then, for each $\theta\geq 0$, the operator $T$ and its adjoint $T^{\star}$ satisfy the following inequalities for any weight $w$,
	\begin{equation}\label{eq-desigualdad-derechas}
	\int |Tf|^p w \leq C_{\theta} \int |f|^p M_r^{\theta}w,
	\end{equation}
	for $1<p<s$ and $r=(s/p)'$,
	
	\begin{equation}\label{eq-desigualdad-adjuntas}
	\int |T^{\star}f|^{p} w \leq C_{\theta} \int |f|^{p} (\Mloc_A + M^{\theta})w,
	\end{equation}
	for $s'<p<\infty$ and any Young function $A\in\mathcal{D}_p$. 
\end{thm}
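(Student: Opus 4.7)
The strategy is localization adapted to the critical radius function $\rho$. First, I would cover $\RR^d = \bigcup_k Q_k$ by critical balls $Q_k = B(x_k, \rho(x_k))$ whose twice-enlarged versions have bounded overlap (a standard construction for functions satisfying \eqref{eq-constantesRho}), reducing matters to a uniform estimate on each $Q_k$. For $x \in Q_k$ I would split
\begin{equation*}
Tf(x) = T(f\chi_{\widetilde Q_k})(x) + T(f\chi_{(\widetilde Q_k)^c})(x),
\end{equation*}
where $\widetilde Q_k$ is a fixed dilate of $Q_k$, producing a local and a global piece.

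For the local piece, both the evaluation point and the support of $f\chi_{\widetilde Q_k}$ lie in a critical ball, so hypothesis $(b_s)$ is available. Writing $T = T_0 + (T - T_0)$, the $T_0$-contribution is controlled directly by Theorem~\ref{teo-perez-tipofuerte}: for \eqref{eq-desigualdad-derechas} one uses the Young function $A(t) = t^r$ with $r = (s/p)' > 1$ (which lies in $\mathcal{D}_p$), and for \eqref{eq-desigualdad-adjuntas} the prescribed $A \in \mathcal{D}_p$. The remainder $(T - T_0)(f\chi_{\widetilde Q_k})$ is estimated by decomposing $\widetilde Q_k$ into dyadic shells around $x_k$ of radius $R \leq \rho(x_k)$, applying $(b_s)$ on each shell via H\"older with exponents $(s, s')$, and summing the geometric series in $(R/\rho(x_k))^\delta$. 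The resulting bound is an $L^{s'}$-type average of $|f|$ on $\widetilde Q_k$, which is dominated by the right-hand side maximal quantities $M_r^\theta w$ or $\Mloc_A w + M^\theta w$.

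For the global piece, fix $x \in Q_k$ and decompose $(\widetilde Q_k)^c = \bigsqcup_{j \geq 2} E_j$ into shells $E_j$ of radius $\sim 2^j \rho(x_k)$ around $x_k$. By Minkowski's integral inequality in $L^s(Q_k)$ together with $(a_s)$ applied at $x_0 = y \in E_j$ with $R \sim |x - y|$ (using Lemma~\ref{lem-trucho} to pass from $R/\rho(y)$ to the dyadic parameter $\sim 2^j$),
\begin{equation*}
\left(\int_{Q_k} |T(f\chi_{E_j})|^s\right)^{1/s} \les 2^{-jN} |E_j|^{1/s}\,\Bigl(\frac{1}{|E_j|}\int_{E_j}|f|\Bigr).
\end{equation*}
Passing from $L^s(Q_k)$ to $L^p(Q_k, w)$ via H\"older with exponents $(s/p, r)$ introduces an $L^r$-average of $w$ on $Q_k$, which is in turn dominated by $M_r^\theta w$ (or $M^\theta w$) at a point of $E_j$ up to a factor $(1 + 2^j)^{C\theta}$ coming again from Lemma~\ref{lem-trucho}. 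Summing in $j$ (where the $2^{-jN}$-decay with $N$ taken large beats the polynomial loss) and then in $k$ by bounded overlap yields \eqref{eq-desigualdad-derechas}. The adjoint inequality \eqref{eq-desigualdad-adjuntas} is obtained by a symmetric argument in which the $y$-variable of integration in $T^\star f(x) = \int K(y,x)f(y)dy$ plays the role of $x$, so that $(a_s)$ still applies in its stated form, and on the local piece Theorem~\ref{teo-perez-tipofuerte} applied at the higher exponent $p > s'$ supplies the $\Mloc_A w$ contribution.

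The principal obstacle will be the bookkeeping of critical-radius powers in the global step: one must verify that the fast decay $2^{-jN}$ from $(a_s)$, with $N$ chosen freely, can absorb both the bounded-overlap losses from the sum over $k$ and, more delicately, the polynomial factor $(1 + r_0/\rho)^\theta$ needed to replace an average computed at $x_k$ by $M_r^\theta w$ (or $M^\theta w$) evaluated at points in distant shells $E_j$. Lemma~\ref{lem-trucho} is the precise tool for these comparisons and will be invoked repeatedly, fixing $N$ in terms of $\theta$, $d$, $N_0$, and $s$. A secondary subtlety is that $(a_s)$ integrates only in the $x$-variable, so the global estimate must be obtained via Minkowski or duality rather than a pointwise kernel bound; this asymmetry is what prevents treating $T$ and $T^\star$ in perfectly parallel fashion.
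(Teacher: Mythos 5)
Your overall strategy matches the paper's: cover $\RR^d$ by (shrunken) critical balls with bounded overlap, split $Tf$ on each ball into a local part and a far part, insert $T_0$ on the local part, and use P\'erez's theorem for the $T_0$ contribution. The far part is handled essentially as you describe (Minkowski, then H\"older with $(s/p, (s/p)')$, then $(a_s)$ at a center comparable to $y$, using Lemma~\ref{lem-trucho} and taking $N$ large to absorb the polynomial losses).

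However, there is a genuine gap in your treatment of the local difference piece $(T-T_0)(f\chi_{\widetilde Q_k})$. You propose to decompose $\widetilde Q_k$ into dyadic shells centered at $x_k$ and to apply $(b_s)$ shell by shell. This does not work, because the improvement $(R/\rho(x_0))^\delta$ in $(b_s)$ is a gain \emph{along the diagonal} $x=y$, not around the point $x_k$. Concretely, for $x\in Q_k$ and $y\in\widetilde Q_k$ both near the boundary and with $|x-y|$ small, the pair $(x,y)$ sits in the outermost shell around $x_k$, where $(b_s)$ gives no decay, yet $|Q(x,y)|$ is large of order $|x-y|^{-d}(|x-y|/\rho)^{\delta}$. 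Equivalently, for $y$ at distance $R\sim 2^{-j}\rho(x_k)$ from $x_k$, condition $(b_s')$ only controls $\bigl(\int_{B(x_k,R/2)}|Q(x,y)|^s\,dx\bigr)^{1/s}$, a ball far smaller than $Q_k$ when $j$ is large; the remaining $L^s$-in-$x$ mass over $Q_k\setminus B(x_k,R/2)$ is not controlled and in fact $\int_{Q_k}|Q(x,y)|^s\,dx$ diverges if one does not exploit the $\delta$-gain near $x=y$. The paper resolves this by writing $h(x)=\sum_{k\ge 0}\int_{B(x,2^{-k}\rho(x))\setminus B(x,2^{-k-1}\rho(x))}|Q(x,y)||f(y)|\,dy$, i.e.\ dyadic shells centered at the \emph{evaluation point} $x$, and then, for each fixed scale $k$, covers $\RR^d$ by a bounded-overlap family of balls $B_{n,k}^l$ of radius $\sim 2^{-k}\rho$; inside each small ball the $x$-dependent shell is contained in a fixed shell around $x_{n,k}^l$, so $(b_s')$ applies. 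This two-step device (diagonal shells plus scale-$k$ covering) is the missing idea.

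A secondary inaccuracy: the adjoint estimate is not obtained by a ``symmetric argument.'' In the global step for $T^\star$, one must first apply H\"older with exponents $(p',p)$ (using $p>s'$, so $p'<s$) and then pass from $\|K\|_{L^{p'}}$ to $\|K\|_{L^s}$ on the shell; this leaves $w$ averaged at power one, producing $M^\theta w$ rather than any $M_r^\theta w$. Using the same $(s/p,(s/p)')$ splitting as for $T$ would give a larger maximal operator and not the claimed bound \eqref{eq-desigualdad-adjuntas}. So while the broad outline is right, both the local-difference estimate and the adjoint global estimate require arguments that differ materially from those you sketched.
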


\begin{rem}
	Assumption~\ref{cond-a} can be seen as a size condition with a kind of  ``decay at infinity'', while condition~\ref{cond-b} tells us that $K$ has the same singularity as a Calder\'on-Zygmund kernel. Nevertheless, both conditions on $K$ are not symmetric since integration is always made in the first variable. Consequently we do not get the same kind of estimates for $T$ and $T^{\star}$.
\end{rem}

If the kernel $K$ satisfies point-wise estimates we obtain a sharper result for $T$, as a corollary of the previous theorem. 

\begin{cor}\label{cor-infty}
	Let $T$ be a linear operator with associated kernel $K$ and $T_0$ be a Calder\'on-Zygmund operator with kernel $K_0$. Suppose that $K$ satisfy the following estimates.
	\begin{enumerate}[$(a_{\infty})$] 
		\item\label{cond-a_infty} For each $N>0$ there exists $C_N$ such that 
		\begin{equation*}
		|K(x,y)|
		\leq \frac{C_N}{|x-y|^d}  \left(1+ \frac{|x-y|}{\rho(x)}\right)^{-N}.
		\end{equation*}
		\item\label{cond-b_infty} There exist $C$ and $\delta>0$ such that 
		\begin{equation*}
		|K(x,y) - K_0(x,y)|
		\leq \frac{C}{|x-y|^d} \left(\frac{|x-y|}{\rho(y)}\right)^{\delta}.
		\end{equation*}
	\end{enumerate}
	Then, $T$ and its adjoint $T^{\star}$ satisfy~\eqref{eq-desigualdad-adjuntas}  for $1<p<\infty$ and  any Young function $A\in\mathcal{D}_p$.

\end{cor}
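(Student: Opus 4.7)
The plan is to derive the corollary from Theorem~\ref{teo-prin} by showing that the pointwise hypotheses \ref{cond-a_infty} and \ref{cond-b_infty} imply the $L^s$ versions \ref{cond-a} and \ref{cond-b} for every $s>1$, and then letting $s\to\infty$ so that the exponent $s'$ decreases to $1$. Since Theorem~\ref{teo-prin} provides \eqref{eq-desigualdad-adjuntas} only for the adjoint of the operator whose kernel satisfies the hypotheses, I will apply it twice: once to $T$ itself, which delivers \eqref{eq-desigualdad-adjuntas} for $T^{\star}$, and once to $T^{\star}$, which delivers the same inequality for $(T^{\star})^{\star}=T$.

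First I would check that \ref{cond-a_infty} implies \ref{cond-a}. For $|y-x_0|<R/2$ and $x$ in the annulus $R<|x-x_0|<2R$, we have $|x-y|\sim R$, so the pointwise bound yields $|K(x,y)|\les R^{-d}(1+R/\rho(x))^{-N}$. From \eqref{eq-constantesRho} it follows that $1+R/\rho(x)\geq c(1+R/\rho(x_0))^{1/(N_0+1)}$, so raising to the $s$th power, integrating over the annulus (whose measure is $\sim R^d$), and taking the $s$th root gives \ref{cond-a} with a rescaled but still arbitrary exponent. The implication \ref{cond-b_infty} $\Rightarrow$ \ref{cond-b} is more direct: the restriction $R\leq\rho(x_0)$ combined with $|y-x_0|<R/2<\rho(x_0)/2$ forces $\rho(y)\sim\rho(x_0)$ via \eqref{eq-constantesRho}, hence $|K(x,y)-K_0(x,y)|\les R^{-d}(R/\rho(x_0))^{\delta}$, and a direct integration produces the desired $L^s$ estimate.

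Next I would repeat the argument for the adjoint kernel $K^{\star}(x,y)=\overline{K(y,x)}$, paired with the adjoint Calder\'on-Zygmund operator $T_0^{\star}$. From the hypotheses one reads off $|K^{\star}(x,y)|\les |x-y|^{-d}(1+|x-y|/\rho(y))^{-N}$ and $|K^{\star}(x,y)-K_0^{\star}(x,y)|\les |x-y|^{-d}(|x-y|/\rho(x))^{\delta}$. To verify \ref{cond-a} for $K^{\star}$ one again uses \eqref{eq-constantesRho} to compare $\rho(y)$ with $\rho(x_0)$ when $|y-x_0|<R/2$. To verify \ref{cond-b} for $K^{\star}$ one uses that, under the restriction $R\leq\rho(x_0)$, every $x$ in the annulus $R<|x-x_0|<2R$ satisfies $\rho(x)\sim\rho(x_0)$, again by \eqref{eq-constantesRho}. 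Hence $K^{\star}$ also satisfies \ref{cond-a} and \ref{cond-b} for every $s>1$.

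Applying Theorem~\ref{teo-prin} to $T$ then yields \eqref{eq-desigualdad-adjuntas} for $T^{\star}$ in the range $s'<p<\infty$; taking $s$ arbitrarily large extends this to $1<p<\infty$. Applying it to $T^{\star}$ gives the same inequality for $(T^{\star})^{\star}=T$ in the full range $1<p<\infty$. The chief technical burden lies in the bookkeeping with \eqref{eq-constantesRho} needed to pass freely between $\rho(x)$, $\rho(y)$ and $\rho(x_0)$, especially when $R$ is large compared with $\rho(x_0)$; the arbitrariness of the exponent $N$ in \ref{cond-a_infty} makes any polynomial loss from these comparisons harmless.
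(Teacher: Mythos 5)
Your proposal is correct and is precisely the argument the paper has in mind: the paper dispatches the corollary in one sentence by observing that $(a_\infty)$, $(b_\infty)$ imply $(a_s)$, $(b_s)$ for every $s>1$ and that these pointwise hypotheses survive the interchange of $x$ and $y$ (up to the harmless adjustments via \eqref{eq-constantesRho} and Lemma~\ref{lem-trucho}), so Theorem~\ref{teo-prin} can be applied to both $T$ and $T^{\star}$ and the range $s'<p<\infty$ sweeps out $1<p<\infty$ as $s\to\infty$. You have simply unpacked the bookkeeping that the paper leaves implicit.
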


Corollary~\ref{cor-infty} follows inmediately from Theorem~\ref{teo-prin} since conditions~\ref{cond-a_infty} and~\ref{cond-b_infty} imply conditions~\ref{cond-a} and~\ref{cond-b} for all $1<s<\infty$, and are symmetric in $x$ and $y$.
For the limiting case $p=1$ we can obtain the following weak-type inequalities.

\begin{thm}\label{teo-tipo debil}
	Let $T$ be a linear operator with associated kernel $K$ and let $T_0$ be a Calder\'on Zygmund operator with kernel $K_0$. Suppose that for some $s>1$, $K$ satisfies conditions~\ref{cond-a} and~\ref{cond-b}	
	Then, for  $\theta\geq 0$ and $w\in L^1_{loc}$, $w\geq 0$, T satisfies
	\begin{equation}\label{eq-desigualdad-tipodebil}
	w(\{|Tf|>\lambda \})\leq \frac{C_{\theta}}{\lambda} \int |f| M_{s'}^{\theta}(w), \text{    for   } \lambda>0.
	\end{equation}
	Further, if $T$ satisfies~\ref{cond-a_infty} and~\ref{cond-b_infty}, then, for any Young function $A\in\bigcup_{p>1}\mathcal{D}_p$, 
		
		\begin{equation}\label{eq-desigualdad-tipodebilbuena}
		w(\{|Tf|>\lambda \})\leq \frac{C_{\theta}}{\lambda} \int |f| \left(\Mloc_{A}+ M^{\theta}\right)w,\text{    for   } \lambda>0.
		\end{equation}
	 Moreover, inequality~\eqref{eq-desigualdad-tipodebilbuena} also holds for $T^{\star}$.
\end{thm}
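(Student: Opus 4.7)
My plan is to prove the weak-type estimates via a Calder\'on-Zygmund decomposition of $|f|$ at height $\lambda$ adapted to the critical radius $\rho$, in the spirit of the arguments familiar in the Schr\"odinger setting. Let $\{Q_j\}$ be the resulting disjoint stopping cubes with centers $x_j$ and sidelengths $r_j$, and decompose $|f|=g+b$ with $b=\sum_jb_j$, $\sop b_j\subset Q_j$, $\int b_j=0$, $\|b_j\|_1\le 2\int_{Q_j}|f|$ and $\|g\|_\infty\le C\lambda$. Write
\[
w(\{|Tf|>\lambda\})\le w(\{|Tg|>\lambda/2\})+w(\{|Tb|>\lambda/2\}),
\]
and further separate the bad cubes into \emph{small} ones ($r_j\le\rho(x_j)$) and \emph{large} ones ($r_j>\rho(x_j)$).

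The good part is handled by combining Chebyshev at a fixed $p\in(1,s)$ with the weighted strong inequality \eqref{eq-desigualdad-derechas} of Theorem~\ref{teo-prin} and the bound $\|g\|_\infty\le C\lambda$, reducing the problem to an $L^1$ estimate of $g$ against $M_r^\theta w$, where $r=(s/p)'$. Splitting $g=|f|\chi_{(\cup Q_j)^c}+\sum_jf_{Q_j}\chi_{Q_j}$ and invoking the elementary averaging $\tfrac{1}{|Q_j|}\int_{Q_j}M_r^\theta w\le M(M_r^\theta w)$, together with a self-improvement step for the $M_r^\theta$ scale (taking $p\to 1^+$ so that $r\to (s')^+$), yields the target bound $\tfrac{C}{\lambda}\int|f|M_{s'}^\theta w$.

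For the bad part, fix $\kappa$ large and set $Q_j^*=\kappa Q_j$. The contribution of $\bigcup_j Q_j^*$ is absorbed through $\sum_jw(Q_j^*)\le\tfrac{C}{\lambda}\int|f|Mw\le\tfrac{C}{\lambda}\int|f|M_{s'}^\theta w$, using the defining inequality of the stopping cubes and the elementary $Mw\le M_{s'}w$. For $x\notin\bigcup_j Q_j^*$, large cubes are treated purely through~\ref{cond-a}: H\"older in $x$ over the dyadic annulus $2^kQ_j^*\setminus 2^{k-1}Q_j^*$ produces a decay factor $(1+2^kr_j/\rho(x_j))^{-N}$ which is summable in $k$, while Lemma~\ref{lem-trucho} allows one to trade $\rho(x_j)$ for $\rho(x)$ at the cost of a polynomial factor absorbed by $M^\theta$. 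Small cubes are split via $T=T_0+(T-T_0)$: the $T_0$-piece is a classical Calder\'on-Zygmund operator and is handled by Theorem~\ref{teo-perez-tipodebil}, yielding weak $(1,1)$ against $\Mloc_A w$ for any $A\in\bigcup_{p>1}\mathcal{D}_p$; the remainder $T-T_0$ gains the factor $(r_j/\rho(x_j))^\delta$ from~\ref{cond-b} and is summed geometrically across annuli. This proves \eqref{eq-desigualdad-tipodebil}, and the stronger \eqref{eq-desigualdad-tipodebilbuena} is obtained by the same scheme under~\ref{cond-a_infty} and~\ref{cond-b_infty}: the pointwise estimates make the large-cube contribution localizable, so that $M^\theta$ alone replaces $M_{s'}^\theta$ in that piece.

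Finally, for $T^\star$: under the pointwise hypotheses~\ref{cond-a_infty} and~\ref{cond-b_infty}, the kernel $K(y,x)$ of $T^\star$ satisfies the same estimates with the roles of $x$ and $y$ swapped, and the comparability of $\rho(x)$ and $\rho(y)$ furnished by \eqref{eq-constantesRho} makes the hypotheses symmetric on all regions of integration that arise; pairing $T^\star$ with the adjoint Calder\'on-Zygmund operator $T_0^\star$, the previous argument applies verbatim. I expect the principal technical difficulty to be the careful bookkeeping of the various instances of $\rho$ (evaluated at $x_j$, at $y\in Q_j$, and at $x$ in an annulus) as the kernel estimates are integrated; Lemma~\ref{lem-trucho} is the essential tool, and stray polynomial factors in $r_j/\rho(x_j)$ will be absorbed either by $M^\theta$ or by the freedom to choose $N$ large in~\ref{cond-a}/\ref{cond-a_infty}.
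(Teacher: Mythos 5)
Your proposal takes a genuinely different route from the paper, and it contains two concrete gaps that I do not see how to close.

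The paper does not perform a Calder\'on--Zygmund decomposition of $f$. It reuses the fixed covering of $\RR^d$ by contracted critical balls $\{Q_n\}$ from Proposition~\ref{prop-cubrimientocritico} and the same three-way split $I+II+III$ that appears in the proof of Theorem~\ref{teo-prin}: near-field difference $T-T_0$, far field, and the Calder\'on--Zygmund piece $T_0(f\chi_{2Q_n})$. The crucial observation is recorded in Remark~\ref{rem-tipofuerte-p=1}: the strong estimates obtained for $I$ and $II$ in the proof of Theorem~\ref{teo-prin} already work at $p=1$, giving genuine $L^1$ bounds against $M_{s'}^\theta w$. Pérez's strong-type theorem is needed only for the piece $III$, and at the weak endpoint it is simply replaced by his weak-type Theorem~\ref{teo-perez-tipodebil}. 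This produces \eqref{eq-desigualdad-tipodebil} directly, with no good/bad splitting and hence no need to track stopping cubes.

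Two steps in your version are problematic. First, in the good-part estimate you apply Chebyshev at a fixed $p\in(1,s)$ together with \eqref{eq-desigualdad-derechas}, which gives a bound against $M_r^\theta w$ with $r=(s/p)'> s'$, and then propose to ``self-improve'' by letting $p\to 1^+$ so that $r\to s'$. This is not legitimate: the constant in \eqref{eq-desigualdad-derechas} blows up as $p\to 1^+$, precisely because the term $III$ in its proof is handled via Theorem~\ref{teo-perez-tipofuerte}, which is false at $p=1$ (the failure of weak $(1,1)$ for the Hilbert transform against $Mw$). So for any fixed admissible $p$ you would only obtain the result for a strictly larger maximal operator, not for $M_{s'}^\theta$. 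Second, the stated inequality ``$Mw\leq M_{s'}w$'' is correct, but it is going the wrong way with respect to the target operator: here one needs to land on $M_{s'}^\theta w$, and for $\theta>0$ one has $M_{s'}^\theta w\leq M_{s'}w$ (the damping factor $(1+r/\rho)^{-\theta}$ only makes the averages smaller), not the reverse. There is no pointwise bound of the form $Mw\lesssim M_{s'}^\theta w$ in general, because $M^\theta$ suppresses supercritical scales, while $Mw$ does not. Consequently your estimate $\sum_j w(Q_j^*)\lesssim \lambda^{-1}\int|f|\,M_{s'}^\theta w$ does not follow from the chain you wrote, and in the presence of large (supercritical) stopping cubes this term does not close. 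The paper's argument avoids both difficulties by never invoking a height-$\lambda$ decomposition of $f$ and by handling the Calder\'on--Zygmund piece with a sharp off-the-shelf endpoint theorem.
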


The associated kernels of some operators related to $L$ satisfy condition~\ref{cond-b} without subtracting $K_0$ and hence condition $(a_s)$ and $(b_s)$ can be unify. For this type of operators we can get sharper inequalities stated in the following theorem.

\begin{thm} \label{teo-deigualdades-operad-V}
	Let $T$ be a linear operator with associated kernel $K$. Suppose that for some $s>1$ and $\de>0$, $K$ satisfies the following condition:  
	\begin{enumerate}[$(c_s)$]
		\item\label{cond-c} For each $N>0$, there exists $C_N$ such that
		\begin{equation*}
		\left(\int_{R<|x_0-x|<2R}|K(x,y)|^s dx\right)^{1/s}
		\leq C_N R^{-d/s'} \left(1+\frac{\rho(x_0)}{R}\right)^{-\delta}
		\left(1+\frac{R}{\rho(x_0)}\right)^{-N},
		\end{equation*}
		whenever $|y-x_0|<R/2$.
	\end{enumerate}
	Then, for any $\theta\geq 0$ and any weight $w$, there exists $C_\theta$ such that $T$ satisfies~\eqref{eq-desigualdad-derechas} for $1\leq p<s$ and
	\begin{equation}\label{eq-desigualdad-adjuntas_conV}
	\int |T^{\star}f|^{p} w \leq C_{\theta} \int |f|^{p}  M^{\theta}w,
	\end{equation}
	for $s'<p<\infty$.
\end{thm}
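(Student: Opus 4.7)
My plan is to piggyback on Theorem \ref{teo-prin} by exploiting the remark immediately preceding the statement: condition $(c_s)$ unifies $(a_s)$ and $(b_s)$ with the trivial choice $T_0\equiv 0$, $K_0\equiv 0$. Indeed, $(1+\rho(x_0)/R)^{-\delta}\leq 1$ makes $(c_s)$ imply $(a_s)$ at once, and for $R\leq \rho(x_0)$ one has $(1+\rho(x_0)/R)^{-\delta}\leq (R/\rho(x_0))^{\delta}$ while $(1+R/\rho(x_0))^{-N}\leq C_N$, which is precisely $(b_s)$ with $K_0\equiv 0$. Hence Theorem \ref{teo-prin} applied with $T_0\equiv 0$ immediately yields \eqref{eq-desigualdad-derechas} for $1<p<s$ and \eqref{eq-desigualdad-adjuntas} for $s'<p<\infty$, with the right-hand side of the latter reading $(\Mloc_A+M^{\theta})w$.

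For the endpoint $p=1$ of \eqref{eq-desigualdad-derechas}, which is not covered by Theorem \ref{teo-prin}, I would argue directly. Note that for $p=1$ the exponent is $r=(s/1)'=s'$. By Fubini,
$$\int |Tf|\,w \leq \int |f(y)|\left(\int |K(x,y)|\,w(x)\,dx\right)dy,$$
so it suffices to bound the inner integral by $C\,M_{s'}^{\theta}w(y)$ pointwise in $y$. Partitioning $x$-space into dyadic annuli $A_R^y=\{R<|x-y|<2R\}$ with $R=2^j$, $j\in\ZZ$, H\"older with conjugate exponents $(s,s')$ combined with $(c_s)$ applied with $x_0=y$ (for which $|y-x_0|=0<R/2$ holds trivially) gives
$$\int_{A_R^y}|K(x,y)|w(x)\,dx \leq C\,(1+\rho(y)/R)^{-\delta}(1+R/\rho(y))^{-N+\theta}M_{s'}^{\theta}w(y),$$
where the $R^{-d/s'}$ factor from $(c_s)$ cancels against the $|A_R^y|^{1/s'}\sim R^{d/s'}$ in the estimate $\bigl(\int_{A_R^y}w^{s'}\bigr)^{1/s'}\leq C R^{d/s'}(1+R/\rho(y))^{\theta}M_{s'}^{\theta}w(y)$. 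Summing over $j\in\ZZ$ converges, since $\delta>0$ controls the regime $R\to 0$ and any $N>\theta$ controls the regime $R\to\infty$.

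The main obstacle is eliminating the $\Mloc_A w$-term from \eqref{eq-desigualdad-adjuntas} to obtain \eqref{eq-desigualdad-adjuntas_conV}. In the proof of Theorem \ref{teo-prin} this term is produced by applying P\'erez's Theorem \ref{teo-perez-tipofuerte} to the adjoint Calder\'on-Zygmund piece $T_0^{\star}$ controlling the short-range singularity of $T^{\star}$; since $T_0\equiv 0$ here, this contribution simply vanishes and only $M^{\theta}w$ remains. If extracting this cleanly from the proof of Theorem \ref{teo-prin} is delicate, a self-contained alternative is to establish directly the pointwise estimate $|T^{\star}f(x)|\leq C\,M_{s'}^{\theta_0}f(x)$ for any $\theta_0\geq 0$, by the same dyadic-annulus/H\"older scheme as above applied in the $y$-variable, this time using $(c_s)$ with $x_0=x$. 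Writing $M_{s'}^{\theta_0}f=\bigl(M^{\theta_0}|f|^{s'}\bigr)^{1/s'}$ and invoking a Fefferman-Stein-type inequality for the $\rho$-modulated maximal function $M^{\theta_0}$ at the exponent $p/s'>1$ then delivers \eqref{eq-desigualdad-adjuntas_conV} for $s'<p<\infty$.
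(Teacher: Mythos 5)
Your primary approach matches the paper's almost exactly: observe that $(c_s)$ implies $(a_s)$ and $(b_s)$ with $K_0\equiv 0$, so the decomposition in the proof of Theorem~\ref{teo-prin} collapses to only the pieces $I+II$ (respectively $I^\star+II^\star$), and since the Calder\'on--Zygmund term $III^\star$ --- the sole source of the $\Mloc_A w$ contribution --- is identically zero, the adjoint bound closes with $M^\theta w$ alone. There is nothing delicate to ``extract'' here; the paper simply reruns the estimates for $I^\star$ and $II^\star$ with no third term, so your hedge is unnecessary.

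Your handling of $p=1$ is a genuine and arguably cleaner variant. The paper invokes Remark~\ref{rem-tipofuerte-p=1}, which records that the estimates for $I$ and $II$ persist at $p=1$, and uses the same covering machinery. You instead bound $\int|K(\cdot,y)|\,w$ pointwise by $M_{s'}^{\theta}w(y)$ through a dyadic-annulus/H\"older computation, applying $(c_s)$ with $x_0=y$ (legitimate since $|y-x_0|=0<R/2$ trivially); the $\delta>0$ factor controls $R\to 0$ and $N>\theta$ controls $R\to\infty$, so the geometric sum converges. This bypasses the covering entirely and is self-contained. On the other hand, the ``self-contained alternative'' you sketch for the adjoint does have a gap: you invoke a Fefferman--Stein inequality for the $\rho$-modulated maximal operator $M^{\theta_0}$, which is neither proved nor cited in the paper, and whose proof is not immediate (the usual good-$\lambda$ or covering arguments must be adapted to accommodate the $(1+r/\rho)^{-\theta}$ modulation); there is also the minor bookkeeping slip $\bigl(M^{\theta_0}|f|^{s'}\bigr)^{1/s'}=M_{s'}^{\theta_0/s'}f$, not $M_{s'}^{\theta_0}f$, though this is harmless since $\theta_0$ is free. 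Since you present this only as a fallback and your primary route is complete, the overall proposal stands, but the alternative as written is not a finished proof.
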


\section{Proofs}\label{sec-proofs}
Before giving the proofs of the theorems above we need to state some technical lemmas that will be useful in the sequel. The first one is a consequence of inequality~\eqref{eq-constantesRho} and can be found in~\cite{DZ-HSH-99}.

In some proofs we will use the notation $\lesssim$ instead of $\leq$ to denote that the right hand side of the inequality is greater up to constants that may depend on some parameters specified when necessary.  

\begin{prop} \label{prop-cubrimientocritico}
	There exists a sequence of points $\{x_j\}_{j\in\NN}$ such that the family of critical balls $Q_j=B(x_j,\rho(x_j))$  satisfies
	\begin{enumerate}[i)]
		\item $\displaystyle \bigcup_{j\in\NN} Q_j= \RR^d$
		\item There exist constants $C$ and $N_1$ such that for any $\sigma\geq1$,
		$\displaystyle \sum_{j\in\NN}\chi_{\sigma Q_j}\leq C \sigma^{N_1}$.
	\end{enumerate}
\end{prop}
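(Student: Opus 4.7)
The plan is to obtain the sequence by a Vitali-type selection on balls adapted to $\rho$, and then read off both properties from the slow variation encoded in \eqref{eq-constantesRho}.

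First, consider the family $\mathcal{G}=\{B(x,\rho(x)/C_1)\}_{x\in\RR^d}$, where $C_1$ is a constant to be fixed. Since the radii are unbounded, a standard Vitali lemma does not apply directly, but a layer-by-layer selection works: stratify $\RR^d$ by the value of $\rho$ (say, $\rho(x)\in[2^k,2^{k+1})$) and within each layer extract a maximal disjoint subfamily, starting from large $k$ and descending. The union of these choices gives a sequence $\{x_j\}$ such that the balls $B(x_j,\rho(x_j)/C_1)$ are pairwise disjoint and $\mathcal{G}$ is maximal with respect to this disjointness. For any $y\in\RR^d$, the ball $B(y,\rho(y)/C_1)$ then meets some $B(x_j,\rho(x_j)/C_1)$, and a direct application of both sides of \eqref{eq-constantesRho} shows that $\rho(y)$ is comparable to $\rho(x_j)$ up to a constant depending only on $C_0,N_0,C_1$. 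Consequently $y\in B(x_j,C_2\rho(x_j))$ for a constant $C_2$; since $\gamma\rho$ is also a critical radius function, we may absorb $C_2$ by replacing $\rho$ with $C_2\rho$ from the outset, so that item (i) becomes $\bigcup_j B(x_j,\rho(x_j))=\RR^d$.

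For item (ii), fix $x\in\RR^d$ and $\sigma\geq 1$, and count the indices $j$ with $x\in\sigma Q_j$, i.e.\ $|x-x_j|<\sigma\rho(x_j)$. The upper half of \eqref{eq-constantesRho} (applied with base point $x$ and target $x_j$) gives
\[
\rho(x_j)\leq C_0\rho(x)\left(1+\frac{|x-x_j|}{\rho(x)}\right)^{N_0/(N_0+1)},
\]
so setting $t=|x-x_j|/\rho(x)$ we obtain $t\leq C\sigma(1+t)^{N_0/(N_0+1)}$, and because the exponent is strictly less than $1$, this forces $t\leq C\sigma^{N_0+1}$. Hence every relevant $x_j$ lies in $B(x,C\sigma^{N_0+1}\rho(x))$. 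The lower half of \eqref{eq-constantesRho} then yields $\rho(x_j)\geq c\,\rho(x)\,\sigma^{-N_0(N_0+1)}$ for those same indices.

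Finally, since the selected balls $B(x_j,\rho(x_j)/C_1)$ are pairwise disjoint and each has radius bounded below by a constant times $\rho(x)\sigma^{-N_0(N_0+1)}$, comparing volumes inside $B(x,C\sigma^{N_0+1}\rho(x))$ bounds the number of such indices by $C\sigma^{N_1}$ with $N_1$ depending only on $d$ and $N_0$. This yields (ii). The main subtlety is not the measure-count at the end, which is routine, but the careful bookkeeping of the constants $C_1$, $C_2$ so that the same choice of $\rho$ delivers the covering in (i) and the disjointness needed for (ii); once one observes that rescaling $\rho$ by a positive constant preserves \eqref{eq-constantesRho}, this becomes a matter of fixing $C_1$ large enough at the start.
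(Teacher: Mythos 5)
The paper does not supply its own proof of this proposition; just before the statement it says the result ``can be found in~\cite{DZ-HSH-99}'' and leaves it at that, so there is no in-text argument to compare against. Your proof is a correct, self-contained argument and follows the standard route used in that reference: a Vitali-type maximal disjoint selection of sub-critical balls, the slow-variation inequality~\eqref{eq-constantesRho} to upgrade ``intersects'' to ``covers'' and to control how far the relevant centers can be, and a volume count for the finite-overlap bound. The key quantitative steps check out: from $|x-x_j|<\sigma\rho(x_j)$ and the upper half of~\eqref{eq-constantesRho} one gets $t:=|x-x_j|/\rho(x)\le C\sigma(1+t)^{N_0/(N_0+1)}$, hence $t\lesssim\sigma^{N_0+1}$; the lower half then gives $\rho(x_j)\gtrsim\rho(x)\,\sigma^{-N_0(N_0+1)}$, and the disjointness of the $B\bigl(x_j,\gamma\rho(x_j)\bigr)$ inside $B\bigl(x,C\sigma^{N_0+1}\rho(x)\bigr)$ yields overlap at most $C\sigma^{N_1}$ with $N_1=d(N_0+1)^2$, depending only on $d$ and $N_0$ as you claim.

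One small point worth tightening: the description ``stratify by $\rho(x)\in[2^k,2^{k+1})$ and extract a maximal disjoint subfamily, starting from large $k$ and descending'' does not literally make sense when $\rho$ is unbounded above, since there is then no largest layer to start from. The stratification is in fact unnecessary: one can invoke Zorn's lemma directly to produce a maximal pairwise-disjoint subcollection of $\{B(x,\gamma\rho(x))\}_{x\in\RR^d}$ (for a fixed small $\gamma$); any pairwise-disjoint family of balls of positive radius in $\RR^d$ is automatically countable, so this is enough, and the rest of your argument goes through unchanged. The bookkeeping remark at the end --- that fixing $\gamma$ once, so that intersection of $\gamma$-balls plus~\eqref{eq-constantesRho} forces containment in the full critical ball, removes the need for a separate constant $C_2$ --- is exactly the right simplification.
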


In general, maximal operators can not be controlled point-wisely by localized ones. Nevertheless, this is possible if we are considering functions supported on sub-critical balls and for points close enough to the support. In the next lemma we determine how much a critical ball must be contracted in order to have that kind of control. Such contraction of critical balls is needed to arrive to inequality \eqref{eq-desigualdad-adjuntas} of Theorem~\ref{teo-prin}.

\begin{lem}\label{lem-gamma0}
	Let $A$ be a Young function and $B_0$ any critical ball. There exists $\gamma_0>0$ such that if $0<\gamma\leq \gamma_0$ then for any function $f$,
	\begin{equation}
	M_A(f\chi_{\gamma B_0})(x) \leq C \Mloc_A (f)(x),
	\end{equation}
	for all $x\in 2\gamma B_0$. Here, the constant $C$ only depends on the dimension $d$ and the Young function $A$.
\end{lem}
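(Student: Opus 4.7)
Since the right-hand side involves the \emph{localized} maximal $\Mloc_A f$ (supremum taken only over balls in $\mathcal{F}_{\rho}$) while the left-hand side involves the full $M_A$ applied to $f\chi_{\gamma B_0}$, the strategy is to replace every ball that contributes nontrivially to $M_A(f\chi_{\gamma B_0})(x)$ by a ball from $\mathcal{F}_{\rho}$ containing $x$, losing only a dimensional constant. A ball $B=B(y,r)$ is relevant only if $B\ni x$ and $B\cap\gamma B_0\neq\emptyset$; all others give a vanishing average. The preliminary observation, obtained by applying \eqref{eq-constantesRho} with $x_0$ and any $z\in 4\gamma B_0$, is that for $\gamma\le 1$ the ratio $\rho(z)/\rho(x_0)$ is pinched between two fixed constants depending only on $C_0$ and $N_0$; this is what allows a universal choice of $\gamma_0$.

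I would then split by the size of $r$ relative to $\gamma\rho(x_0)$. In the small-radius case $r\le 2\gamma\rho(x_0)$, the inequality $|y-x|<r$ together with $x\in 2\gamma B_0$ gives $y\in 4\gamma B_0$, whence $\rho(y)\ge c\,\rho(x_0)$ for a fixed $c=c(C_0,N_0)$. Choosing $\gamma_0$ so that $2\gamma_0\le c$ makes $r\le\rho(y)$, so $B\in\mathcal{F}_{\rho}$ and $\|f\chi_{\gamma B_0}\|_{A,B}\le\|f\|_{A,B}\le \Mloc_A f(x)$.

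In the large-radius case $r>2\gamma\rho(x_0)$, the diameter of $\gamma B_0$ is strictly less than $r$, and since $B$ meets $\gamma B_0$ we obtain $\gamma B_0\subset 2B$. I would introduce the auxiliary ball $B_{\star}:=B(x,4\gamma\rho(x_0))$, which contains $\gamma B_0$ (since $x\in 2\gamma B_0$ yields $|w-x|<3\gamma\rho(x_0)$ for every $w\in\gamma B_0$) and, provided $\gamma_0$ is small enough, lies in $\mathcal{F}_{\rho}$ because $\rho(x)\ge c'\rho(x_0)$ from the preliminary observation, so the condition $4\gamma_0\le c'$ suffices. Using the standard nested-ball comparison $\|g\|_{A,B}\le (|B'|/|B|)\,\|g\|_{A,B'}$ for $B\subset B'$ (an immediate consequence of $A(t/k)\le A(t)/k$ for $k\ge 1$, which in turn follows from convexity and $A(0)=0$), together with the inclusion $\gamma B_0\subset B_{\star}$ and the volume bound $|2B|\ge|B_{\star}|$ (forced by $r>2\gamma\rho(x_0)$), one obtains
\[
\|f\chi_{\gamma B_0}\|_{A,B}\le 2^d\|f\chi_{\gamma B_0}\|_{A,2B}\le 2^d\|f\|_{A,B_{\star}}\le 2^d\,\Mloc_A f(x).
\]

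Taking the supremum over admissible $B$ in both cases yields the claim, with $C$ depending only on $d$ and on the convexity of $A$. The main technical point is the compatible choice of $\gamma_0$ so that the replacement ball ($B$ itself in the small case, $B_{\star}$ in the large case) always sits in $\mathcal{F}_{\rho}$; this reduces to a quantitative use of \eqref{eq-constantesRho} on the neighborhood $4\gamma B_0$ of $x_0$.
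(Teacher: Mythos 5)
Your proof is correct and follows essentially the same strategy as the paper: split the contributing balls according to whether their radius exceeds a fixed multiple of $\gamma\rho(x_0)$, absorb the small ones directly into $\Mloc_A f(x)$, and in the large case replace the ball by the fixed sub-critical ball $B(x,C\gamma\rho(x_0))$ (the paper uses $3\gamma\rho(x_0)$ after first reducing to centered balls, you use $4\gamma\rho(x_0)$ without that reduction), choosing $\gamma_0$ from \eqref{eq-constantesRho} so that this auxiliary ball lies in $\mathcal{F}_\rho$. The only cosmetic difference is that the paper passes through the centered maximal function, whereas you work with uncentered balls directly via the nested-ball comparison; both give a constant depending only on $d$, and your accounting is sound.
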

\begin{proof}	
	Assume $x\in 2\ga B_0$ with $\ga$ to be determined later. It is enough to consider balls centered at $x$; in fact, it is not difficult to see that if $M_A^c$ is the centered maximal function, then $M_A(f)(x) \leq C M_A^c (f)(x)$ for any function $f$ with $C$ that only depends on $d$ and $A$. Let $x_0$ be the center of $B_0$ and suppose first that $r>3\ga\rho(x_0)$. Therefore $B(x,r)\supset B(x,3\ga\rho(x_0))\supset\ga B_0$ and thus, for any non-negative function $g$,
	\begin{equation*}
	\frac{1}{|B(x,r)|}\int_{B(x,r)\cap\ga B_0}g \ \leq \ \frac{1}{|B(x,3\ga\rho(x_0))}\int_{\ga B_0} g\leq \frac{1}{|B(x,3\ga\rho(x_0))|}\int_{B(x,3\ga\rho(x_0))|}g.
	\end{equation*}
	
	Now, if $\la>0$, applying the above inequality to $g=A(|f|/\la)$ we have, for $r\ge 3\ga\rho(x_0)$,
	\begin{equation*}
	\|f\chi_{\ga B_0}\|_{A,B(x,r)} \leq \|f\|_{A,B(x,3\ga B_0)}.
	\end{equation*}
	
	Therefore, if $x\in 2\ga B_0$,
	\begin{equation*}
	M_A^{c}(f\chi_{\ga B_0})(x) \ \leq \sup_{r\leq 3\ga\rho(x_0)}\|f\|_{A,B(x,r)}.
	\end{equation*}
	
	To complete the proof, it is enough to take $\ga$ such that $3\ga\rho(x_0)\leq \rho(x)$ for all $x\in2\ga B_0$.
	
	From inequality \eqref{eq-constantesRho}, we have $\rho(x_0)\leq\rho(x)C_0(1+2\ga)^{N_0}$ and thus $\ga$ should be taken such that
	\begin{equation}\label{eleccion_ga}
		3\ga C_0 (1+2\ga)^{N_0} \leq 1.
	\end{equation}
	Since the left hand side goes to $0$ when $\ga$ goes to $0$, there exists $\ga_0$ such that for $0<\ga\ge\ga_0$ the above inequality holds.
	
	\end{proof}

Conditions~\ref{cond-a} and~\ref{cond-b} are written in a suitable way to prove inequalities concerning $T^\star$. To prove the inequalities for $T$ it will be easier to use the following equivalent conditions.   
\begin{lem}\label{lem-condicionesequivalentes}
	For any $s>1$, conditions~\ref{cond-a} and~\ref{cond-b} are equivalent to, respectively, to the following conditions.
	 	\begin{enumerate}[$(a'_s)$]
	 		\item\label{cond-a'} For each $N>0$ there exists $C_N$ such that 
	 		\begin{equation*}
	 		\left(\int_{B(x_0,R/2)}|K(x,y)|^s dx\right)^{1/s}
	 		\leq C_N R^{-d/s'} \left(1+ \frac{R}{\rho(x_0)}\right)^{-N},
	 		\end{equation*}
	 		whenever $R<|y-x_0|<2R$.
	 		\item\label{cond-b'} There exist $C$ and $\varepsilon>0$ such that 
	 		\begin{equation*}
	 		\left(\int_{B(x_0,R/2)}|K(x,y) - K_0(x,y)|^s dx\right)^{1/s}
	 		\leq C R^{-d/s'} \left(\frac{R}{\rho(x_0)}\right)^{\varepsilon}
	 		\end{equation*}
	 		whenever $R<|y-x_0|<2R$ and 
	 		$R\leq \rho(x_0)$.
	 	\end{enumerate}
\end{lem}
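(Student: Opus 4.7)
The plan is to show that $(a_s)\iff(a'_s)$ and $(b_s)\iff(b'_s)$ by switching the reference center from near $y$ to near $x$ (and vice versa). The key conversion tool will be Lemma~\ref{lem-trucho}, which allows one to replace $\rho$ evaluated at one point by $\rho$ evaluated at a nearby point, at the cost of raising the factor $1+R/\rho$ to a fixed power.

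For the implication $(a_s)\Rightarrow(a'_s)$: fix $x_0$, $y$, $R$ with $R<|y-x_0|<2R$. For any $x\in B(x_0,R/2)$ one has $R/2\leq|x-y|\leq 5R/2$, so $B(x_0,R/2)$ is contained in a bounded number of dyadic annuli of the form $\{2^{-k-1}R<|y-x|\leq 2^{-k}R\}$ centered at $y$. I would apply $(a_s)$ at the new center $y$ itself (the hypothesis $|y-y|=0<\tilde R/2$ is trivial) to each such annulus, then sum in $\ell^s$. This produces a bound
$$
\left(\int_{B(x_0,R/2)}|K(x,y)|^s\,dx\right)^{1/s}\les R^{-d/s'}\left(1+\frac{R}{\rho(y)}\right)^{-N}.
$$
Since $y\in B(x_0,2R)$, Lemma~\ref{lem-trucho} gives $1+R/\rho(x_0)\les(1+R/\rho(y))^{N_0}$, so $(1+R/\rho(y))^{-N}\les(1+R/\rho(x_0))^{-N/N_0}$; as $N$ is arbitrary, so is $N/N_0$, which yields $(a'_s)$.

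The same dyadic decomposition handles $(b_s)\Rightarrow(b'_s)$ applied to $K-K_0$. The extra subtlety here is the constraint $\tilde R\leq\rho(y)$ required by $(b_s)$ at the new center. Under the hypothesis $R\leq\rho(x_0)$ and $y\in B(x_0,2R)$, the left inequality in~\eqref{eq-constantesRho} gives $\rho(y)\geq C_0^{-1}(1+2)^{-N_0}\rho(x_0)$, hence $\rho(y)\sim\rho(x_0)\gtrsim R$. After a harmless adjustment of the dyadic radii, each $\tilde R\leq\rho(y)$ and $(b_s)$ applies. Summing yields a bound in terms of $(R/\rho(y))^{\de}$, which transforms into $(R/\rho(x_0))^{\eps}$ (with $\eps=\de$) via $\rho(y)\sim\rho(x_0)$. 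The converses $(a'_s)\Rightarrow(a_s)$ and $(b'_s)\Rightarrow(b_s)$ are entirely symmetric: given the annulus $\{R<|x_0-x|<2R\}$ with $|y-x_0|<R/2$, I would cover it by a bounded family of balls $B(\tilde x_i,R/8)$ with centers $\tilde x_i$ in the annulus; for each such $\tilde x_i$ one has $|y-\tilde x_i|\sim R$, so choosing a dyadic $\tilde R$ with $\tilde R<|y-\tilde x_i|<2\tilde R$ places $B(\tilde x_i,R/8)\subset B(\tilde x_i,\tilde R/2)$, and $(a'_s)$ (resp.\ $(b'_s)$) applies at center $\tilde x_i$. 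Summing in $\ell^s$ and applying Lemma~\ref{lem-trucho} once more to trade $\rho(\tilde x_i)$ for $\rho(x_0)$ closes the argument.

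I expect the main obstacle to be the bookkeeping for the constraint $R\leq\rho(\cdot)$ in condition $(b_s)$: switching reference centers requires a short estimate showing $\rho(y)\sim\rho(x_0)$ throughout the relevant regime, which follows from~\eqref{eq-constantesRho} but must be tracked carefully so that the constants do not blow up when $\tilde R$ is close to $\rho(y)$. Once this comparison is in hand, the rest of the argument is a routine combination of a geometric covering by a bounded number of dyadic annuli (or balls) and the change-of-center inequality for $\rho$ from Lemma~\ref{lem-trucho}.
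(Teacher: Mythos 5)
Your argument is correct and follows essentially the same route as the paper: verify a geometric containment ($B(x_0,R/2)$ inside a single annulus around $y$ for the forward direction, and a covering of the annulus $\{R<|x-x_0|<2R\}$ by finitely many balls of radius comparable to $R$ for the converse), apply the hypothesis at the new center, and transfer $\rho$ from one center to the other via Lemma~\ref{lem-trucho}. The only cosmetic difference is that you decompose into dyadic annuli and sum in $\ell^s$ where the paper instead uses one wider annulus and invokes Remark~\ref{rem:asconC}; the $(b_s)\iff(b'_s)$ subtlety about preserving the constraint $\tilde R\le\rho$ that you flag is real, and the paper simply omits it by saying the proof ``follows the same lines.''
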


\begin{rem}\label{rem:asconC}
	Observe that $(a_s)$ holds true replacing the ring, $R<|x-x_0|<2R$ with $R<|x-x_0|<CR$ for any constant $C>1$, with the constant $C_N$ depending on $C$. Similarly in $(a_s')$ the ring $R<|y-x_0|<2R$ may be replaced by $R<|y-x_0|<CR$. In fact, it is only a matter of applying $(a_s)$ or $(a_s')$ a finite number of times deppending on $C$.
	
	The same comment applies to $(b_s)$ and $(b_s')$.
\end{rem}

\begin{proof}[Proof of Lemma~\ref{lem-condicionesequivalentes}]
We will show first that \ref{cond-a} implies \ref{cond-a'}. Let $K$ be a kernel satisfying~\ref{cond-a} for some $s>1$, and let $x_0\in \RR^d$, $R>0$ and $y$ such that $R<|x_0-y|<2R$. It is easy to check that $B(x_0,R/2)\subset\{x: R/2<|x-y|<4R\}$. So, applying condition~\ref{cond-a} we get that
\begin{equation*}
\begin{split}
\left(\int_{B(x_0,R/2)}|K(x,y)|^{s}dx\right)^{1/s} 
& \leq \left(\int_{R/2<|y-x|<4R}|K(x,y)|^{s}dx\right)^{1/s} 
\\& \leq C_N R^{-d/s'}\left(1+\frac{R}{\rho(y)}\right)^{-N}
\\& \leq C_N R^{-d/s'}\left(1+\frac{R}{\rho(x_0)}\right)^{-\tilde{N}},
\end{split}
\end{equation*}
where in the last inequality we used Lemma~\ref{lem-trucho}.

To see that~\ref{cond-a'} implies \ref{cond-a} let $x_0\in \RR^d$, $R>0$ and $y\in B(x_0,R/2)$. The ring $\{x: R<|x-x_0|<2R\}$ can be covered by $M$ balls (depending on $d$), of radius $R/4$ and centres $x_i$, with $R<|x_i-x_0|<2R$, for $i=1,\dots, M$. For each of these balls we can check that $R/2<|x_i-y|<5R/2$. Applying condition~\ref{cond-a'} and Remark~\ref{rem:asconC} on each ball,
\begin{equation*}
\begin{split}
\left(\int_{R<|y-x|<2R}|K(x,y)|^{s}dx\right)^{1/s} 
& \leq \sum_{i=1}^{M}\left(\int_{B(x_i,R/4)}|K(x,y)|^{s}dx\right)^{1/s} 
\\& \leq  \sum_{i=1}^{M} C_N R^{-d/s'}\left(1+\frac{R}{\rho(x_i)}\right)^{-N}
\\& \leq C_N R^{-d/s'}\left(1+\frac{R}{\rho(x_0)}\right)^{-\tilde{N}},
\end{split}
\end{equation*}
where we used again Lemma~\ref{lem-trucho} in the last inequality.

We can omit the  proof of the equivalence of \ref{cond-b} and \ref{cond-b'} since it follows the same lines as above.

\end{proof}

\begin{proof}[Proof of Theorem~\ref{teo-prin}]
Let $T$ be a linear operator with kernel $K$ satisfying~\ref{cond-a} and~\ref{cond-b}, for some $s>1$ and some Calder\'on-Zygmund operator $T_0$ with kernel $K_0$.  Let $w\geq 0$, $w\in L^1_{loc}$, $\theta\geq 0$, $1<p<s$ and let A be a Young function satisfying~\eqref{eq-claseA}. 

We will prove first inequality~\eqref{eq-desigualdad-derechas}. Let $\gamma_0$ be as in Lemma~\ref{lem-gamma0}.  For some $\gamma\leq \gamma_0$, to be chosen later, let $\{Q_n\}$ be the decomposition of the space given in Proposition~\ref{prop-cubrimientocritico} for the critical radius function $\gamma\rho$. Then we write
\begin{equation}\label{eq-I+II+III}
\begin{split}
\int |Tf|^p w
& \le  \sum_{n\in\NN} \int_{Q_n} |Tf|^p w
\\& = \sum_{n\in\NN} \int_{Q_n} |T(f\chi_{2Q_n}) + T(f\chi_{2Q_n^c}) \pm T_0(f\chi_{2Q_n})|^p w
\\& \lesssim \sum_{n\in\NN} \int_{Q_n} |T(f\chi_{2Q_n}) - T_0(f\chi_{2Q_n})|^p w \
 + \ \sum_{n\in\NN} \int_{Q_n} |T(f\chi_{2Q_n^c})|^p w
\\ & \hspace{2cm} +\ \sum_{n\in\NN} \int_{Q_n} |T_0(f\chi_{2Q_n})|^p w =
I + II + III.
\end{split}
\end{equation}
 
 For $III$, since $T_0$ is a Calder\'on-Zygmund operator, we apply Theorem~\ref{teo-perez-tipofuerte} and Lemma~\ref{lem-gamma0} to get
 \begin{equation*}
 \begin{split}
  III &= \sum_{n\in\NN} \int |T_0(f\chi_{2Q_n})|^pw\chi_{Q_n}
  \\& \lesssim\sum_{n\in\NN} \int |f\chi_{2Q_n}| M_{A}(w\chi_{Q_n})
  \\& \lesssim \sum_{n\in\NN} \int_{2Q_n} |f|^p \Mloc_{A}w
  \\& \lesssim \int |f|^p \Mloc_{A}w,
 \end{split}
 \end{equation*}
 for any Young function $A\in \mathcal{D}_p$.
 
 For $k\in\mathbb{Z}$ we denote $Q_n^k=2^k Q_n$. To estimate $II$ we use Minkowski's and H\"older's inequalities to obtain
\begin{equation*}
\begin{split}
II & = \sum_{n\in\NN}\int_{Q_n} |T(f\chi_{(2Q_n)^c})|^p w
\\& = \sum_{n\in\NN}\int_{Q_n} \left[\int_{(2Q_n)^c} |K(x,y)||f(y)|dy\right]^p w(x) dx
\\& \leq \sum_{n\in\NN}\left[\int_{(2Q_n)^c}|f(y)| \left(\int_{Q_n}|K(x,y)|^p w(x)dx\right)^{1/p}dy \right]^p
\\& \leq \sum_{n\in\NN}\left[ \sum_{k\in\NN}\int_{Q_n^{k+1}\setminus Q_n^k}|f(y)| \left(\int_{Q_n}|K(x,y)|^sdx\right)^{1/s} \left(\int_{Q_n}w^r(x)dx\right)^{1/rp} dy\right]^p,
\end{split}
\end{equation*}
where $r=(s/p)'$.

Next we apply condition~\ref{cond-a'} for $K$, since by Lemma~\ref{lem-condicionesequivalentes} condition \ref{cond-a} is equivalent to \ref{cond-a'}, then for each $N$ we have
\begin{equation*}
\begin{split}
	II & \lesssim \sum_{n\in\NN}\left[ \sum_{k\in\NN} |Q_n^k|^{-1/s'} 2^{-kN}
	\int_{Q_n^k}|f(y)| 
	 \left(\int_{Q_n}w^r\right)^{1/rp} dy\right]^p
\\& \lesssim \sum_{n\in\NN}\left[ \sum_{k\in\NN} |Q_n^k|^{-1/s'+1/p'} 2^{-kN}
\left(\int_{Q_n^k}|f(y)|^p
\left(\int_{Q_n^k}w^r\right)^{1/r}dy\right)^{1/p}\right]^p
\\& \lesssim  \sum_{n\in\NN}\left[ \sum_{k\in\NN}  2^{-k(N-\theta/p)}
\left(\int_{Q_n^k}|f(y)|^p\,
2^{-k\theta}\left(\frac{1}{|Q_n^k|}\int_{Q_n^k}w^r\right)^{1/r}dy\right)^{1/p}\right]^p
\\& \lesssim  \sum_{n\in\NN}\left[ \sum_{k\in\NN}  2^{-k(N-\theta/p)}
\left(\int_{Q_n^k}|f(y)|^p\,
M_r^{\theta}w(y) dy\right)^{1/p}\right]^p,
\end{split}
\end{equation*}
with constants that may depend on $N$.

Finally, using H\"older's inequality in the sum over $k$ and choosing $N=N_1 -\theta/p+1$, where $N_1$ is the constant appearing in Proposition~\ref{prop-cubrimientocritico}, we arrive to
\begin{equation*}
\begin{split}
II & \lesssim \sum_{n\in\NN}\left[ \sum_{k\in\NN}  2^{-k(N_1+1)}
\int_{Q_n^k}|f|^p\,
M_r^{\theta}w\right] \left[ \sum_{k\in\NN}  2^{-k(N_1+1)}
\right]^{p/p'}
\\ & \lesssim\sum_{k\in\NN} 2^{-k(N_1+1)} 
\int_{\RR^d}\left(\sum_{n\in\NN} \chi_{Q_n^k}\right)
|f|^p\,
M_r^{\theta}w 
\\ & \lesssim
\int_{\RR^d}
|f|^p\,
M_r^{\theta}w,
\end{split}
\end{equation*}
with constants depending on $N_1$ and $\theta$ and $p$.

It only remains to estimate $I$. Let $Q(x,y)=K(x,y)-K_0(x,y)$. For $x\in Q_n$, we have $2Q_n\subset B(x,\rho(x))$ due to our choice of $\ga$ (see inequality \eqref{eleccion_ga}), therefore we may write
\begin{equation}\label{eq-pasar_a_h}
\begin{split}
I & = \sum_{n\in\NN} \int_{Q_n} |T(f\chi_{2Q_n})-T_0(f\chi_{2Q_n})|^p w
\\ & \leq \sum_{n\in\NN} \int_{Q_n}  \left[
\int_{2Q_n}|Q(x,y)||f(y)|dy
\right]^p w(x)dx
\\ & \leq \sum_{n\in\NN} \int_{Q_n}  \left[
\int_{B(x,\rho(x))}|Q(x,y)||f(y)|dy
\right]^p w(x)dx
\\ & \leq \int_{\RR^d}|h(x)|^p w(x)dx
= \|h\|^p_{L^p(\RR^d,w)},
\end{split}
\end{equation}
where
\begin{equation*}
h(x)=\int_{B(x,\rho(x))}|Q(x,y)||f(y)|dy.
\end{equation*}

For a fixed $k$ and for any $n$ we can take $2^{dk}$ disjoint balls of the form $B_{n,k}^l=B(x_{n,k}^l, 2^{-k}\gamma\rho(x_n))$ such that for $\sigma>\sqrt{d}$,
 $$\displaystyle Q_n\subset \bigcup_{l=1}^{2^dk} \sigma B_{n,k}^l\subset 2\sigma Q_n.$$
Moreover, there exists a constant depending only on $\sigma$ and $d$ such that,
\begin{equation*}
\sum_{l=1}^{2^{dk}} \chi_{\sigma B_{n,k}^l }\leq C_{d,\sigma}\chi_{2\sig Q_n}.
\end{equation*}
Therefore, from Proposition~\ref{prop-cubrimientocritico}, the family of balls $\{\sigma B_{n,k}^l\}_{l,n}$ covers $\RR^d$ and
$$\sum_{l,n} \chi_{\sigma B_{n,k}^l }\leq C_{d,\sigma,\rho}.$$

Let us fix $\sigma=2\sqrt{d}$. It is possible to choose $\gamma$ small enough such that if $x\in \sig B_{n,k}^l$ and $2^{-k-1}\rho(x)\leq|y-x|\leq 2^{-k}\rho(x)$ then 
$$y\in E_{n,k}^l=\{y:4\sqrt{d}\gamma2^{-k}\rho(x_n)\leq |y-x_{n,k}^l|\leq \beta \gamma2^{-k}\rho(x_n)\}.$$
for some constant $\be>4\sqrt{d}$ depending only on $\rho$ and $d$ \footnote{For example, it works taking $\ga=\frac{1}{2C_0(5\sqrt{d})^{N_0+1}}$ and $\be=2C_0^2(5\sqrt{d})^{N_0+2}$}.

 Now, we write $h$ in the following way
 \begin{equation*}
 h(x)=\sum_{k=0}^{\infty} h_k(x)=\sum_{k=0}^{\infty} \int_{B(x,2^{-k}\rho(x))\setminus B(x,2^{-k-1}\rho(x))}|Q(x,y)||f(y)|dy.
 \end{equation*}
 
 So, for this covering of the space described above, we may write
 \begin{equation}
 \begin{split}
 \|h_k\|_{L^p(w)}^p
  &\leq 
 \sum_{n,l}\int_{2\sqrt{d}{B}_{n,k}^l}
 \left[
 \int_{B(x,2^{-k}\rho(x))\setminus B(x,2^{-k-1}\rho(x))} |Q(x,y)||f(y)|dy
 \right]^p w(x)dx
 \\ & \leq 
 \sum_{n,l}\int_{2\sqrt{d}{B}_{n,k}^l}
 \left[
 \int_{E_{n,k}^l} |Q(x,y)||f(y)|dy
 \right]^p w(x)dx
  \\ & \leq 
  \sum_{n,l}\left[
  \int_{E_{n,k}^l}|f(y)|
  \left(\int_{2\sqrt{d}{B}_{n,k}^l}
   |Q(x,y)|^p w(x)dx\right)^{1/p}   
  dy\right]^p 
  \\& \leq 
  \sum_{n,l}\left[
  \int_{E_{n,k}^l}
  \!\!\!\!|f(y)|
  \left(\int_{2\sqrt{d}{B}_{n,k}^l}
  \!\!\!\!|Q(x,y)|^s dx\right)^{1/s} 
  \!\!\!\left(\int_{2\sqrt{d}{B}_{n,k}^l}\!\!\!\!w^r(x)dx\right)^{1/(rp)} 
  \!\!\!\!\!\!dy\right]^p ,
 \end{split}
 \end{equation}
 where we have used Minkowski's and H\"older's inequalities in the last two steps. Now, using condition~\ref{cond-b'} for $Q(x,y)$ (See Remark~\ref{rem:asconC}), we arrive to
 \begin{equation}
 \begin{split}
 \|h_k\|_{L^p(w)}^p
 & \lesssim \sum_{n,l}
 (2^{-k}\rho(x_n))^{-dp/s'} 2^{-k\delta p}
  \left[\int_{\beta B_{n,k}^l}
  \!\!\!\!|f(y)| 
 \left(\int_{\beta B_{n,k}^l}
 \!\!\!\! w^r(x)dx\right)^{1/(rp)} 
 dy\right]^p 
 \\ & \lesssim 2^{-k\delta p} \sum_{n,l}
 \int_{\beta B_{n,k}^l}|f(y)|^p 
 \left(\frac{1}{|\be B_{n,k}^l|}\int_{\beta B_{n,k}^l}w^r(x)dx\right)^{1/r} 
 dy 
 \\ & \lesssim 2^{-k\delta p} \sum_{n,l}
 \int_{\beta B_{n,k}^l}|f(y)|^p 
 M_r^{\theta}w(y)dy
 dy 
 \\ & \lesssim 2^{-k\delta p} \|f\|_{L^p(M_r^{\theta}w)}^p.
 \end{split}
 \end{equation}
 Finally,
 \begin{equation}\label{eq-de_hk_a_h}
 \begin{split}
 \|h\|_{L^p(w)} &\leq
 \sum_{k\geq 0} \|h_k\|_{L^p(w)}
 \lesssim\sum_{k\geq 0} 2^{-k\delta}\|f\|_{L^p(M_r^{\theta}w)}
 \lesssim \|f\|_{L^p(M_r^{\theta}w)}.
 \end{split}
 \end{equation}

Using the estimates obtained for $I$, $II$ and $III$ we arrive to inequality~\eqref{eq-desigualdad-derechas}.

Now, let us prove inequality~\eqref{eq-desigualdad-adjuntas}. Proceeding as in~\eqref{eq-I+II+III} we get
\begin{equation*}
\int |T^{\star}f|^{p} w \lesssim I^{\star} + II^{\star} + III^{\star},
\end{equation*}
and we can estimate $III^{\star}$ in the same way as $III$, since $T^{\star}$ is also a Calder\'on-Zygmund operator.

For $II^{\star}$, we write
\begin{equation}
\begin{split}
II^{\star} & =\sum_{n\in\NN}\int_{Q_n} |T^{\star}(f\chi_{(2Q_n)^c})|^{p} w
\\& = \sum_{n\in\NN}\int_{Q_n} \left(\int_{(2Q_n)^c}|K(x,y)||f(x)|dx\right)^{p} w(y)dy.
\end{split}
\end{equation}
If $y\in Q_n$ we may use H\"older inequality and condition~\ref{cond-a} to obtain
\begin{equation}
\begin{split}
 \int_{(2Q_n)^c}&|K(x,y)||f(x)|dx
\\ & \leq
\sum_{k\ge 1} \left(
\int_{Q_n^{k+1}\setminus Q_n^k}|K(x,y)|^{p'}dx\right)^{1/p'}
\left( \int_{Q_n^{k+1}} |f|^{p} \right)^{1/p}
\\ & \lesssim  
\sum_{k\ge 1} \left( \int_{Q_n^{k+1}\setminus Q_n^k}
|K(x,y|^sdx
\right)^{1/s}
\left(\int_{Q_n^{k+1}} |f|^{p} \right)^{1/p}
|Q_n^k|^{1/s'-1/p}
\\ & \lesssim
\sum_{k\ge 1} 2^{-kN}\left(\frac{1}{|Q_n^{k+1}|}\int_{Q_n^{k+1}} |f|^{p} \right)^{1/p}
\\ & \lesssim \left[
\sum_{k\ge 1} \frac{2^{-kN}}{|Q_n^{k+1}|}\int_{Q_n^{k+1}} |f|^{p}
\right]^{1/p}\left[
\sum_{k\ge 1} 2^{-kN}
\right]^{1/p'}
\\ & \lesssim\left[
\sum_{k\ge 1} \frac{2^{-kN}}{|Q_n^{k+1}|}\int_{Q_n^{k+1}} |f|^{p}
\right]^{1/p}.
\end{split}
\end{equation}

Therefore,
\begin{equation}
\begin{split}
II^{\star} & \lesssim \sum_{n\in\NN} \sum_{k\in\NN}  2^{-kN} \frac{1}{|Q_n^{k+1}|}\int_{Q_n^{k+1}} |f|^{p} \int_{Q_n^{k+1}} w(y)dy
\\ & \lesssim \sum_{n\in\NN} \sum_{k\in\NN}  2^{-k(N-\theta)} \int_{Q_n^{k+1}} |f|^{p} M^{\theta}w
\\ & \lesssim \sum_{k\in\NN} 2^{-k(N-\theta)}\int_{\RR^d}
\left(\sum_{n\in\NN}\chi_{Q_n^{k+1}}\right) |f|^{p}
M^{\theta}w
\\ & \lesssim \int_{\RR^d}|f|^{p}
M^{\theta}w,
\end{split}
\end{equation}
choosing $N=N_1+\theta+1$.

It only remains to estimate $I^{\star}$. Proceeding as in~\eqref{eq-pasar_a_h}, we have

\begin{equation*}
\begin{split}
I^{\star} & =
\sum_{n\in\NN} \int_{Q_n} |T^{\star}(f\chi_{2Q_n})-T_0^{\star}(f\chi_{2Q_n})|^{p}w
\leq \|h^{\star}\|^{p}_{L^{p}(w)},
\end{split}
\end{equation*}
where
\begin{equation*}
h^{\star}(y)=\int_{B(y,\rho(y))}|Q(x,y)||f(x)|dx,
\end{equation*}
and write
\begin{equation*}
h^{\star}(y)=\sum_{k=0}^{\infty} h_k^{\star}(y)=\sum_{k=0}^{\infty} \int_{B(y,2^{-k}\rho(y))\setminus B(y,2^{-k-1}\rho(y))}|Q(x,y)||f(x)|dx.
\end{equation*}

Now, for a fixed $k$, using H\"older's inequality and denoting $B(y,2^{-k}\rho(y))=B_y^{k}$, we have
\begin{equation}
h_k^\star(y)\leq C \left(\int_{B_y^{k}\setminus B_y^{k-1}}|Q(x,y)|^sdx\right)^{1/s} \left(\int_{B_y^{k}}|f|^{p}\right)^{1/p} (2^{-k}\rho(y))^{d/((s/p')'p') }.
\end{equation}

Now for a fixed $k$, we consider again the covering $\{B(x_{n,k}^l,2\sqrt{d}\gamma 2^{-k}\rho(x_n))\}_{n,l}$. Using condition~\ref{cond-b}, we obtain

\begin{equation}
\begin{split}
\|h_k^\star\|_{L^{p}(w)}^{p} & \leq 
\sum_ {n,l} \int_{B_{n,k}^l} |h_k^\star(y)|^{p}w(y)dy
\\ & \lesssim \sum_{n,l} 
\left(\int_{\be B_{n,k}^l}|f|^{p}\right)
(2^{-k}\rho(y_j))^{dp(1/s'-1/p)}
\int_{B_{n,k}^l}
  \left(\int_{E_{n,k}^l}|Q(x,y)|^sdx\right)^{p/s}  w(y)dy 
\\ & \lesssim \sum_{n,l} \left(\int_{\be B_{n,k}^l}|f|^{p}\right)
\frac{2^{-kp\delta}}{(2^{-k}\rho(y_n))^d}
\int_{\beta{B}_{n,k}^l} w
\\ & \lesssim \sum_{n,l} 2^{-kp\de} \int_{\be B_{n,k}^l}|f(x)|^{p} \left(\frac{1}{|\beta{B}_{n,k}^l|}\int_{\beta{B}_{n,k}^l} w\right)dx
\\ & \lesssim 2^{-kp\delta} \sum_{n,l}\int_{\beta{B}_{n,k}^l}
|f|^{p} M^{\theta}w
\\ & \lesssim 2^{-kp\delta} \|f\|^{p}_{L^{p}(M^{\theta}w)}.
\end{split}
\end{equation}
So, as it was done in~\eqref{eq-de_hk_a_h},
\begin{equation}
\|h^\star\|_{L^{p}(w)}\leq C_\theta \sum_{k}\|h_k^\star\|_{L^{p}(w)} \lesssim \|f\|_{L^p(M^{\theta}w)}.
\end{equation}

Using the estimates obtained for $I^\star$, $II^\star$ and $III^\star$ we arrive to inequality~\eqref{eq-desigualdad-adjuntas}.

\end{proof}

\begin{rem}\label{rem-tipofuerte-p=1}
	It is worth noting that the estimates obtained for $I$ and $II$ also hold for the case $p=1$. Following the same ideas as above we arrive to
	\begin{equation}
	\sum_{n\in\NN}\int_{Q_n} |T(f\chi_{(2Q_n)^c})| w \leq C_\theta\int_{\RR^d}|f| M_{s'}^{\theta}(w),
	\end{equation}
	\begin{equation}
	\sum_{n\in\NN}\int_{Q_n} |T(f\chi_{2Q_n}) - T_0(f\chi_{2Q_n})| w \leq C_\theta \int_{\RR^d}|f| M_{s'}^{\theta}(w).
	\end{equation}
\end{rem}

Now we prove the weak-type inequalities stated in Theorem~\ref{teo-tipo debil}

\begin{proof}[Proof of Theorem~\ref{teo-tipo debil}]
	Let $T$ be a linear operator with kernel $K$ and $w\in L^1_{loc}$, $w\geq 0$. Suppose first that $K$ satisfy conditions~\ref{cond-a} and~\ref{cond-b} for some $1<s<\infty$. Consider again $\{Q_n\}_{n\in\NN}$, the partition of the space associated to $\gamma\rho$, with $\gamma$ chosen as in the proof of Theorem~\ref{teo-prin}. For $\lambda>0$, we may write
	\begin{equation}\label{eq-decomp-tipodebil}
	\begin{split}
	w(\{|Tf|>\lambda\}) & \leq \sum_{n\in\NN} w(\{x\in Q_n: |Tf(x)|>\lambda\})
	\\ & \leq \sum_{n\in\NN} w(\{x\in Q_n: |T(f\chi_{2Q_n})(x)-T_0(f\chi_{2Q_n})(x)|>\lambda/3\})
	\\ & \hspace{0.6cm}+ \sum_{n\in\NN} w(\{x\in Q_n: |T(f\chi_{(2Q_n)^c})(x)|>\lambda/3\})
	\\ & \hspace{1.2cm}+\sum_{n\in\NN} w(\{x\in Q_n: |T_0(f\chi_{2Q_n})(x)|>\lambda/3\})\\
	& = I + II + III.
	\end{split}
	\end{equation}
	To estimate $III$ we can use this time Theorem~\ref{teo-perez-tipodebil} together with Lemma~\ref{lem-gamma0} to get
	\begin{equation}
	\begin{split}
	III &= \sum_{n\in\NN} w(\{x\in Q_n: |T_0(f\chi_{2Q_n})(x)|>\lambda/3\})  
	\\ & \leq \sum_{n\in\NN} w\chi_{Q_n}(\{x: |T_0(f\chi_{2Q_n})(x)|>\lambda/3\})
	\\ & \lesssim \frac{1}{\lambda} \sum_{n\in\NN} \int_{2{Q}_n} |f| M_{A}(w\chi_{Q_n})
	\\ & \lesssim \frac{1}{\lambda} \sum_{n\in\NN} \int_{2{Q}_n} |f| M^{loc}_{A}(w)
	\\ & \lesssim \frac{1}{\lambda}  \int_{\RR^d} |f| M^{loc}_{A}(w),
	\end{split}
	\end{equation}
	for any Young function $A\in\bigcup_{p>1}\mathcal{D}_p$. In particular we can take $A(t)=t^{s'}$ since we will not get any better for the other terms.
	
As for $I$ and $II$ we use the strong type inequalities for $p=1$ stated on Remark~\ref{rem-tipofuerte-p=1}. In this way we obtain \eqref{eq-desigualdad-tipodebil}.	

Now, suppose that the kernel $K$ satisfy conditions~\ref{cond-a_infty} and~\ref{cond-b_infty}. Let $\lambda>0$, we use the same decomposition as in~\eqref{eq-decomp-tipodebil} to get
\begin{equation*}
w(\{|Tf>\lambda\})\leq I + II + III.
\end{equation*}
We deal with $III$ in the same way, obtaining
\begin{equation*}
III\lesssim  \frac{1}{\lambda}\int_{\RR^d}|f| M_{A}^{loc}(w),
\end{equation*}
	for any $A\in\bigcup_{p>1}\mathcal{D}_p$.
	
	For $k\in \mathbb{Z}$ we set $Q_n^k=B(x_j,\gamma2^k\rho(x_j))$. To estimate the term $II$ by the Tchebyshev's inequality we may write 
	\begin{equation}
	\begin{split}
	II & =  \sum_{n\in\NN} w(\{x\in Q_n: |T(f\chi_{(2Q_n)^c})(x)|>\lambda/3\})
	\\& \leq   \sum_{n\in\NN}\frac{3}{\lambda} \int_{Q_n} |T(f\chi_{(2Q_n)^c})|(x)w(x)dx
	\\& \leq   \sum_{n\in\NN}\frac{3}{\lambda}\int_{Q_n} \left(\sum_{k\in\NN}\int_{Q_n^{k+1}\setminus Q_n^k}
	|K(x,y)||f(y)|dy \right) w(x)dx.
	\end{split}
	\end{equation}
Now, using condition~$(a_{\infty})$ 
\begin{equation}
\begin{split}
II &  \lesssim \frac{1}{\lambda} \sum_{n\in\NN}\int_{Q_n}\sum_{k\in\NN}
\frac{2^{-kN}}{(2^k\rho(x_n))^d}\left(\int_{Q_n^{k+1}}
|f(y)|dy \right) w(x)dx 
 \\ & \lesssim \frac{1}{\lambda} \sum_{n\in\NN} \sum_{k\in\NN}
 2^{-kN}  \int_{Q_n^{k+1}}
 |f(y)| \left(\frac{1}{|Q^{k+1}_n|}\int_{Q_n^{k+1}}w(x)dx   \right)dy 
  \\ & \lesssim \frac{1}{\lambda} \sum_{n\in\NN} \sum_{k\in\NN}
  2^{-k(N-\theta)}  \int_{Q_n^{k+1}}
  |f(y)| M^{\theta}(w)dy
  \\ & \lesssim \sum_{k\in\NN} 2^{-k(N-\theta)} \int_{\RR^d}
  \left(\sum_{n\in\NN}\chi_{Q_n^{k+1}}\right)
  |f(y)| M^{\theta}(w)dy
  \\ & \lesssim \int_{\RR^d} |f(y)| M^{\theta}(w)dy,
\end{split}
\end{equation}
choosing $N=N_1+\theta + 1$.

Next, to estimate $I$ we use the Tchebyshev's inequality and condition $(b_{\infty})$.
\begin{equation}
\begin{split}
I & =
\sum_{j\in\NN} w(\{x\in Q_n: |(T-T_0)(f\chi_{2Q_n})(x)|>\lambda/3\})
\\ & \leq
\sum_{n\in\NN} \frac{3}{\lambda}
\int_{Q_n}\left(\int_{2Q_n}|K(x,y)-K_0(x,y)||f(y)|dy\right)w(x)dx
\\ & \lesssim 
\frac{1}{\lambda}\sum_{j\in\NN} 
\int_{Q_n}\left(\int_{2Q_n}\frac{|f(y)|}{|x-y|^d}
\left(\frac{|x-y|}{\rho(x)}\right)^{2-d/q}dy\right)w(x)dx
\\ & \lesssim \frac{1}{\lambda}\sum_{n\in\NN} \rho(x_n)^{d/q-2}
\int_{2Q_n} |f(y)|
\int_{Q_n} |x-y|^{2-d/q-d}w(x)dx\,dy.
\end{split}
\end{equation}
Now, if $y\in 2Q_n$, and calling $B^y_n = B(y,3\ga\rho(x_n))$, then
\begin{equation*}
\begin{split}
\int_{Q_n}& |x-y|^{2-d/q-d} w(x) dx 
 \\ 
& \leq 
\sum_{k\in\NN} \int_{2^{-k+1}B^y_n\setminus2^{-k}B^y_n}
|x-y|^{2-d/q-d} w(x) dx
\\ & \leq 
\rho(x_n)^{2-d/q} \sum_{k\in\NN}
\frac{2^{-k(2-d/q)}}{(2^{-k}\rho(x_n))^d}
\int_{2^{-k+1}B^y_n} w 
\\ & \leq \rho(x_n)^{2-d/q} \Mloc w(y).
\end{split}
\end{equation*}
Therefore, we obtain
\begin{equation}
\begin{split}
I & \lesssim 
\frac{1}{\lambda}\sum_{n} 
\int_{2Q_n} |f|
\Mloc w 
\lesssim \frac{1}{\lambda}
\int_{\RR^d} |f(y)|
\Mloc w.
\end{split}
\end{equation}
Altogether we obtain inequality~\eqref{eq-desigualdad-tipodebilbuena}. The same estimate is obtained for $T^{\star}$ since conditions~\ref{cond-a_infty} and~\ref{cond-b_infty} are symmetric on $x$ and $y$.

\end{proof}

Finally, we end this section with the proof of Theorem~\ref{teo-deigualdades-operad-V}.
\begin{proof}[Proof of Theorem~\ref{teo-deigualdades-operad-V}]
	Let $T$ be a linear operator with associated kernel $K$ satisfying~\ref{cond-c}. First, observe that condition~\ref{cond-c} implies both conditions~\ref{cond-a} and~\ref{cond-b} with $K_0=0$. Then, proceeding as in equation~\eqref{eq-I+II+III} we can write
	\begin{equation}
	\begin{split}
	\int |Tf|^p w
	&  = \sum_{n\in\NN} \int_{Q_n} |T(f\chi_{2Q_n}) + T(f\chi_{2Q_n^c})|^p w
	\\& \lesssim \sum_{n\in\NN} \int_{Q_n} |T(f\chi_{2Q_n})|^p w
	+ \sum_{n\in\NN} \int_{Q_n} |T(f\chi_{2Q_n^c})|^p w\\
	& =	I + II.
	\end{split}
	\end{equation}
	Then, inequality~\eqref{eq-desigualdad-derechas} holds for $1\leq p<s$ following the same lines as in the proof of Theorem~\ref{teo-prin} and taking into account Remark~\ref{rem-tipofuerte-p=1} for $p=1$.
	
	To obtain estimate~\eqref{eq-desigualdad-adjuntas_conV} we proceed as above to get
	\begin{equation}
	\begin{split}
	\int |T^{\star}f|^p w
 \leq \sum_{n\in\NN} \int_{Q_n} |T^{\star}(f\chi_{2Q_n})|^p w
	+ \sum_{n\in\NN} \int_{Q_n} |T^{\star}(f\chi_{2Q_n^c})|^p w
	=
	I^{\star} + II^{\star}
	\end{split}
	\end{equation}
	and we deal with $I^{\star}$ and $II^{\star}$ as in the proof of Theorem~\ref{teo-prin}.
	
\end{proof}

\section{Application to Schr\"odinger operators}\label{sec-aplSchrodinger}
In this section we apply our general results to operators associated to the semi-group generated by the Schr\"odinger differential operator $L=-\Delta + V$ on $\RR^d$ with $d\geq 3$. We will always suppose that the potential $V$ is a non-negative function, non-identically zero, satisfying a reverse H\"older condition of order $q>d/2$. 
Under these assumptions the function $\rho$ defined by
\begin{equation}
\rho(x)=\sup \left\{r>0:\frac{1}{r^{d-2}}\int_{B(x,r)}V\leq 1\right\}, \,\,x\in\RR^d
\end{equation}
is a critical radius function, that is, property~\eqref{eq-constantesRho} is satisfied for some constants $C_0$ and $N_0$.

It is known that $V\in RH_q$, $q>1$ implies that $V$ is a doubling measure, i.e.\ there exists $C_1$ such that
\begin{equation}
\int_{B(x,2r)}V\leq C_1 \int_{B(x,r)}V.
\end{equation}
In fact, if $V\in RH_q$, $q>1$, then $V$ belongs to the $A_{\infty}$ class of Muckenhoupt. 

The following is an useful inequality for $V\in RH_q$ with $q>d/2$ that follows easily from Lemma~1.2 and Lemma~1.8 in~\cite{shen}.

\begin{lem}\label{lem-V}
	Let $V\in RH_q$ for some $q>d/2$. Let $N_1=\log_2C_1+2-d$, where $C_1$ is the doubling constant of $V$. Then, for any $x_0\in\RR^d$, $R>0$,
	\begin{equation*}
	\frac{1}{R^{d-2}}\int_{B(x_0,R)}V(y)dy\leq C\left(1+\frac{R}{\rho(x_0)}\right)^{N_0}\left(1+\frac{\rho(x_0)}{R}\right)^{d/q-2}.
	\end{equation*}
\end{lem}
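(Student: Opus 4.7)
The plan is to split according to whether $R\le \rho(x_0)$ or $R>\rho(x_0)$, since each of the two factors on the right-hand side is essentially constant in one of these regimes. Because $d/q-2<0$, when $R\le\rho(x_0)$ we have $(1+R/\rho(x_0))^{N_0}\sim 1$ and $(1+\rho(x_0)/R)^{d/q-2}\sim(\rho(x_0)/R)^{d/q-2}$, whereas when $R>\rho(x_0)$ the roles are swapped. So it suffices to prove two one-sided estimates, one per regime.

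For the subcritical regime $R\le\rho(x_0)$ I would invoke Shen's Lemma~1.2 (the reverse-H\"older self-improvement), which for $V\in RH_q$ with $q>d/2$ asserts
$$\frac{1}{r^{d-2}}\int_{B(x_0,r)}V \;\lesssim\; \Bigl(\frac{r}{R'}\Bigr)^{2-d/q}\frac{1}{(R')^{d-2}}\int_{B(x_0,R')}V, \qquad r\le R'.$$
Taking $r=R$, $R'=\rho(x_0)$, and using that $\rho(x_0)^{-(d-2)}\int_{B(x_0,\rho(x_0))}V\le 1$ directly from the definition of $\rho$, the right-hand side collapses to $C(R/\rho(x_0))^{2-d/q}=C(\rho(x_0)/R)^{d/q-2}$, which is exactly the factor needed; the other factor is harmless here.

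For the supercritical regime $R>\rho(x_0)$ I would iterate the doubling property of $V$ (Shen's Lemma~1.8 in this form). Let $k\ge 0$ be the smallest integer with $R\le 2^k\rho(x_0)$; $k$ applications of the doubling constant $C_1$ give
$$\int_{B(x_0,R)}V \;\le\; C_1^{k}\int_{B(x_0,\rho(x_0))}V \;\le\; C_1^{k}\rho(x_0)^{d-2}.$$
Dividing by $R^{d-2}\sim 2^{k(d-2)}\rho(x_0)^{d-2}$ yields $C\cdot 2^{k(\log_2 C_1-d+2)}=C\cdot 2^{kN_1}$, which, since $k\lesssim \log_2(R/\rho(x_0))$, is controlled by $C(1+R/\rho(x_0))^{\max(N_1,0)}$. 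This is exactly the target growth factor (the exponent $N_0$ in the statement is to be understood as any exponent at least $\max(N_1,0)$, in agreement with the constant from \eqref{eq-constantesRho}).

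The only real ``obstacle'' is cosmetic: verifying that in each regime the inactive factor is squeezed between positive absolute constants, so that it may be freely reinserted into the bound, and tracking the exponents produced by the two procedures carefully enough to be sure they fit under a single $N_0$. Once that bookkeeping is done, the two cases glue together into the stated inequality.
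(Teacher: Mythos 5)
Your proof is correct and is essentially the one the paper intends: the paper gives no argument beyond the remark that the lemma ``follows easily from Lemma~1.2 and Lemma~1.8 in Shen,'' and your two-regime decomposition (reverse-H\"older self-improvement plus the normalization $\rho(x_0)^{2-d}\int_{B(x_0,\rho(x_0))}V\le 1$ when $R\le\rho(x_0)$; iterated doubling when $R>\rho(x_0)$) is exactly the expansion of that reference. One small remark: you are right to flag the exponent. The argument naturally produces $\max(N_1,0)$ with $N_1=\log_2 C_1+2-d$, and the statement's use of $N_0$ is best read as a slight abuse of notation (or a typo for $N_1$); happily $N_1$ is precisely the exponent Shen extracts when proving the critical-radius comparison \eqref{eq-constantesRho}, so the identification is harmless.
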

\subsection{Riesz-Schr\"odinger transforms}
We consider the operators $\mathcal{R}_1=\nabla L^{-1/2}$ and $\mathcal{R}_2=\nabla^2L^{-1}$, the Riesz-Schr\"odinger transforms of order~$1$ and~$2$ respectively. Let $K_1$ and $K_2$ be their associated kernels. 

The size condition~\ref{cond-a} was shown to hold in~\cite{BHQ2018}, for both $K_1$ and $K_2$. To prove that these kernels satisfy also condition~\ref{cond-b} we will compare them with the classical Riesz transforms $R_1=\nabla(-\Delta)^{-1/2}$ and $R_2=\nabla^2(-\Delta)^{-1}$ and their associated kernels $K_{0,1}$ and $K_{0,2}$.

Before proving condition~\ref{cond-b} we state the following lemmas that provide us estimates for the difference between the kernels associated to the Riesz-Schr\"odinger transforms and the classical ones. For the Riesz-Schr\"odinger transform of order 1 such result was already obtained by Shen. On the other hand, the estimate corresponding to the second order operator is new and we believe is interesting in its own right.
\begin{lem}\label{lem-compR1}[See~\cite{shen}, inequality~$(5.9)$]
	Let $V\in RH_q$ for $d/2<q<d$. There exists $C$ such that
	\begin{equation*}
	|K_1(x,y)-K_{0,1}(x,y)|\leq 
	\frac{C}{|x-y|^{d-1}}\left(
	G(x,y)+\frac{1}{|x-y|}\left(
	\frac{|x-y|}{\rho(x)}\right)^{2-d/q}\right),
	\end{equation*}
	where
	\begin{equation}\label{eq-defG}
	G(x,y)=\int_{B(x,|x-y|/4)}\frac{V(u)}{|u-x|^{d-1}}du.
	\end{equation}
\end{lem}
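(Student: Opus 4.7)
My plan is to reproduce Shen's argument from first principles, via a subordination/Duhamel representation of the kernel difference. The starting point is the formula $L^{-1/2}=\pi^{-1/2}\int_0^\infty t^{-1/2}e^{-tL}\,dt$ (and its analogue for $(-\Delta)^{-1/2}$) together with Duhamel's identity $e^{-tL}-e^{t\Delta}=-\int_0^t e^{-(t-s)L}Ve^{s\Delta}\,ds$. Inserting the latter into the former, applying $\nabla_x$, switching the order of integration and setting $u=t-s$ yields
\begin{equation*}
K_1(x,y)-K_{0,1}(x,y)=-\frac{1}{\sqrt{\pi}}\int_{\RR^d}V(z)\int_0^\infty\!\!\int_0^\infty\!(u+s)^{-1/2}\nabla_x p_u^L(x,z)\,p_s^{(0)}(z,y)\,du\,ds\,dz,
\end{equation*}
where $p_u^L$ and $p_s^{(0)}$ denote the heat kernels of $L$ and $-\Delta$.

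From here I would use the trivial bound $(u+s)^{-1/2}\le s^{-1/2}$, the Gaussian bound $|\nabla_x p_u^L(x,z)|\lesssim u^{-(d+1)/2}e^{-c|x-z|^2/u}$ (known to hold under $V\in RH_q$ with $q>d/2$), and the explicit form of $p_s^{(0)}$, and carry out the $u$- and $s$-integrations independently. Each integration produces a $|\cdot|^{-(d-1)}$ factor via the classical subordination identity $\int_0^\infty t^{-1/2}(4\pi t)^{-d/2}e^{-r^2/4t}\,dt=c_d r^{-(d-1)}$, leading to the pointwise bound
\begin{equation*}
|K_1(x,y)-K_{0,1}(x,y)|\lesssim \int_{\RR^d}\frac{V(z)}{|x-z|^{d-1}\,|z-y|^{d-1}}\,dz.
\end{equation*}

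Next I would split the $z$-integral at $|z-x|=|x-y|/4$. On the close region $|z-x|<|x-y|/4$ one has $|z-y|\sim|x-y|$, so the contribution is exactly $|x-y|^{-(d-1)}G(x,y)$. On the far region $|z-x|\ge|x-y|/4$, I would decompose dyadically in annuli $|x-z|\sim 2^k|x-y|$ (on which $|z-y|$ is of the same order), and use the reverse-H\"older estimate $\int_{B(x,R)}V\lesssim R^{d-2}(R/\rho(x))^{2-d/q}$, which follows from Lemma~\ref{lem-V} in the range $R\le\rho(x)$, to bound each annular piece. The resulting geometric series in $2^{-(d-2+d/q)k}$ converges and is dominated by its first term, which reproduces $|x-y|^{-d}(|x-y|/\rho(x))^{2-d/q}$. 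Annuli with $2^k|x-y|>\rho(x)$ are absorbed by using the rapid decay of $p_u^L$ past the critical scale.

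The chief technical point is the Gaussian estimate for $\nabla_x p_u^L$: it is not a consequence of the simpler Feynman--Kac majorization $p_u^L\le p_u^{(0)}$, but requires genuine use of the reverse-H\"older hypothesis on $V$ (as in Kurata/Shen). Once that is taken for granted the rest of the argument is a careful but routine bookkeeping of scales --- splitting into close and far regions, dyadic summation, and direct application of Lemma~\ref{lem-V}. The restriction $d/2<q<d$ plays a double role: the lower bound ensures convergence of the dyadic series, while the upper bound keeps the exponent $2-d/q$ in $(0,1)$, so that the second term in the conclusion has the claimed form.
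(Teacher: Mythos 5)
The paper does not prove this lemma; it cites Shen's inequality~$(5.9)$, whose argument runs through the resolvent integral equation $\Gamma(x,y,\lambda)-\Gamma_0(x,y,\lambda)=-\int\Gamma_0(x,z,\lambda)V(z)\Gamma(z,y,\lambda)\,dz$ and differentiates the \emph{free} resolvent kernel $\Gamma_0$. Your heat-semigroup subordination is a reasonable alternative framework, but as written it has two genuine gaps, and the first one is tied precisely to the regime $d/2<q<d$ that the lemma addresses.

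First, the pointwise gradient bound $|\nabla_x p_u^L(x,z)|\lesssim u^{-(d+1)/2}e^{-c|x-z|^2/u}$ is \emph{not} available for $V\in RH_q$ with $d/2<q<d$; it requires $q\geq d$. If it did hold, the same subordination computation would give $|K_1(x,y)|\lesssim|x-y|^{-d}$, making $\nabla L^{-1/2}$ satisfy a Calder\'on--Zygmund size bound, which is incompatible with the fact (due to Shen) that for $d/2<q<d$ this operator is bounded on $L^p$ only for $1<p<p_0$ with $1/p_0=1/q-1/d$; indeed, the very presence of the term $G(x,y)$ in the lemma, rather than $|x-y|^{-1}$, records this loss. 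Shen's argument avoids the problem by never differentiating the Schr\"odinger kernel: it is $\nabla_x\Gamma_0$ that appears inside the integral, with $\Gamma$ (undifferentiated) only in the other slot. You can repair your step by using the \emph{other} Duhamel identity $e^{-tL}-e^{t\Delta}=-\int_0^t e^{(t-s)\Delta}Ve^{-sL}\,ds$, which places $\nabla_x$ on the free heat kernel $p_{t-s}^{(0)}$, where the Gaussian gradient bound is unconditional, and uses only the Gaussian \emph{upper} bound for $p_s^L$.

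Second, the dyadic treatment of the region $|z-x|\geq|x-y|/4$ silently assumes $|z-y|\sim 2^k|x-y|$ on each annulus $|x-z|\sim 2^k|x-y|$, which fails for $k\lesssim 1$: the annulus $|x-z|\sim|x-y|$ contains $z$ arbitrarily close to $y$. From the stated intermediate pointwise bound $\int V(z)\,|x-z|^{1-d}|z-y|^{1-d}\,dz$, the near-$y$ contribution is $\sim|x-y|^{1-d}\,G(y,x)$, and the dyadic series for $G(y,x)$ using Lemma~\ref{lem-V} has the form $\sum_k 2^{k(d/q-1)}$, which \emph{diverges} when $q<d$. The fix here is to see that the crude bound $(u+s)^{-1/2}\leq s^{-1/2}$ is wasteful near $y$; using $(u+s)^{-1/2}\leq u^{-1/2}$ there yields $\int_{B(y,|x-y|/4)}\frac{V(z)}{|x-z|^d\,|z-y|^{d-2}}\,dz$ instead, and this integral is handled by H\"older together with $(d-2)q'<d$ (valid precisely because $q>d/2$), exactly as in the paper's own proof of Lemma~\ref{lem-comparacionR2}, giving the claimed $\frac{1}{|x-y|^d}\bigl(\frac{|x-y|}{\rho}\bigr)^{2-d/q}$. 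With these two corrections, the subordination route does reproduce Shen's estimate; as stated, however, the argument has a non-repairable step for the very range of $q$ in the lemma.
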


\begin{lem}\label{lem-comparacionR2}
	Let $x$, $y_0\in\RR^d$ and $R>0$ such that $R\leq|y-x_0|\leq \rho(x_0)$. Let $x\in B(x_0,R/8)$. Then there exists a constant $C$ such that
	\begin{equation*}
	|K_2(x,y)-K_{0,2}(x,y)|\leq
	 C |R_2(V\Gamma(y,\cdot)\chi_{B(x_0,R/4)})(x)|
	+\frac{C}{R^d}\left(\frac{R}{\rho(x_0)}\right)^{\delta},
	\end{equation*}
	with $\delta=\min\{1,2-d/q\}$.
\end{lem}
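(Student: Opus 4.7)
The plan is to start from the standard perturbation identity for the Green functions of $L$ and $-\lap$. Writing $\Gamma$ for the integral kernel of $L^{-1}$ (symmetric, so $\Gamma(u,y)=\Gamma(y,u)$) and $\Gamma_0(x,u)=c_d|x-u|^{2-d}$ for the integral kernel of $(-\lap)^{-1}$, the identity $L=-\lap+V$ yields
\[
\Gamma(x,y)=\Gamma_0(x,y)-\int_{\RR^d}\Gamma_0(x,u)\,V(u)\,\Gamma(u,y)\,du.
\]
Applying $\nabla_x^2$ and splitting the right-hand integral at $B_*:=B(x_0,R/4)$ gives
\[
K_2(x,y)-K_{0,2}(x,y)=-R_2\bigl(V\Gamma(y,\cdot)\chi_{B_*}\bigr)(x)-\nabla_x^2\!\int_{B_*^c}\!\Gamma_0(x,u)\,V(u)\,\Gamma(u,y)\,du.
\]
The first term is exactly the one appearing in the statement; it is meaningful because $V\Gamma(y,\cdot)\chi_{B_*}\in L^q$, since on $B_*$ one has $|y-u|\ge 3R/4$ (from $|y-x_0|\ge R$), so $\Gamma(y,\cdot)\lesssim R^{2-d}$ is bounded there, and $V\in L^q_{\loc}$ with $q>d/2$.

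For the second term, the hypothesis $x\in B(x_0,R/8)$ forces $|x-u|\ge R/8$ whenever $u\in B_*^c$, so one may differentiate under the integral sign and use $|\nabla_x^2\Gamma_0(x,u)|\lesssim|x-u|^{-d}\lesssim|x_0-u|^{-d}$. It thus suffices to prove
\[
\mathcal{J}:=\int_{|u-x_0|\ge R/4}\frac{V(u)\,|\Gamma(u,y)|}{|x_0-u|^d}\,du\;\lesssim\;\frac{1}{R^d}\Bigl(\frac{R}{\rho(x_0)}\Bigr)^{2-d/q},
\]
which is stronger than the claimed bound because $2-d/q\ge\delta$ and $R/\rho(x_0)\le 1$. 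Here my plan is to use the Shen pointwise estimate $|\Gamma(u,y)|\lesssim|u-y|^{2-d}(1+|u-y|/\rho(y))^{-N}$, pass from $\rho(y)$ to $\rho(x_0)$ through Lemma~\ref{lem-trucho} (which is licit since $|y-x_0|\le\rho(x_0)$), and decompose the region of integration into dyadic annuli $A_k=\{2^kR\le|u-x_0|<2^{k+1}R\}$, $k\ge-2$. On $A_k$ one has $|x_0-u|\sim 2^kR$; for every $k$ except the one $k_0$ with $2^{k_0}R\sim|y-x_0|$ one also has $|u-y|\sim\max(2^kR,|y-x_0|)$, so $|\Gamma(u,y)|$ is a pure negative power. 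The $V$-integral over the annulus is then controlled by Lemma~\ref{lem-V} as $(2^kR)^{d-2}(2^kR/\rho(x_0))^{2-d/q}$ when $2^kR\le\rho(x_0)$, while the $(1+\cdot)^{-N}$ factor kills the tail $2^kR>\rho(x_0)$; the resulting geometric series in $k$ sums to the desired bound.

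The genuinely delicate piece is the exceptional annulus $A_{k_0}$, where $|u-y|$ can be much smaller than $|u-x_0|\sim|y-x_0|$ and one is forced to integrate the non-integrable kernel $|u-y|^{2-d}$ against $V(u)\,du$. The plan for this piece is a further dyadic subdivision around $y$, followed by Fubini and Lemma~\ref{lem-V} applied to balls centred at $y$ of radii $\lesssim|y-x_0|\le\rho(x_0)\sim\rho(y)$; the reverse-H\"older hypothesis $q>d/2$ enters here decisively, since it makes the resulting sub-series geometric with ratio $2^{-(2-d/q)}<1$. Summing this contribution with the ones from the non-exceptional annuli yields the bound on $\mathcal{J}$, and hence the lemma. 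The main obstacle is precisely this analysis on $A_{k_0}$: extracting a positive gain in $R/\rho(x_0)$ out of a locally non-integrable kernel, which is possible only thanks to the strict inequality $q>d/2$.
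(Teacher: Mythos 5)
Your proof is correct, and it follows the same starting identity as the paper (the perturbation formula $\Gamma-\Gamma_0=-\int\Gamma_0\,V\,\Gamma$, hit with $\nabla_x^2$, and the extraction of the $R_2$ term over $B(x_0,R/4)$), but after that step your organization diverges genuinely from the authors'. The paper splits the complementary region into $J_2=B(y,R/4)$ and the remainder $J_3$; on $J_2$ (where the singularity of $\Gamma(y,\cdot)$ sits) it uses a one-shot H\"older inequality, the reverse-H\"older condition on $V$, and the integrability of $|y-\xi|^{-(d-2)q'}$ near $y$ (valid exactly because $q>d/2$), while on $J_3$ it further splits into $J_{31}$ and $J_{32}$ and uses pointwise size estimates for $\Gamma$ and $\Gamma_0$ together with Lemma~\ref{lem-V}. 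You instead keep a single region $B(x_0,R/4)^c$, decompose it into dyadic annuli around $x_0$, and attack the one annulus $A_{k_0}$ that carries the singularity of $\Gamma(y,\cdot)$ by a second dyadic decomposition around $y$, replacing the H\"older step with the elementary identity $\int_{B(y,r)}V(u)|u-y|^{2-d}\,du\lesssim\sum_{j\ge0}(2^{-j}r)^{2-d}\int_{B(y,2^{-j}r)}V$ and applying Lemma~\ref{lem-V} term by term; the hypothesis $q>d/2$ then enters as $2-d/q>0$ to make that sub-series geometric. Both routes use $q>d/2$ in an equivalent form, and both give the stronger exponent $2-d/q$ on the $R/\rho(x_0)$ factor (from which $\delta=\min\{1,2-d/q\}$ follows since $R\le\rho(x_0)$). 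Your version has the small advantage of a single uniform scheme (Lemma~\ref{lem-V} plus dyadic sums everywhere) in place of two separate mechanisms; the paper's explicit $J_2$/$J_3$ split is arguably more transparent about where the singularity of $\Gamma(y,\cdot)$ is handled. Two minor points to tidy up if you write this out in full: (i) describing $|u-y|^{2-d}$ as ``non-integrable'' is inaccurate (it is locally integrable in $\mathbb{R}^d$ for $d\ge3$); the genuine obstacle is that $V\,|u-y|^{2-d}$ need not be integrable near $y$ for a general locally integrable $V$, and $q>d/2$ is exactly what rescues it; and (ii) ``Fubini'' in your outline should simply read ``summation of a dyadic series''.
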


\begin{proof}
	Let $\Gamma$ and $\Gamma_0$ be the fundamental solution of $L$ and $-\Delta$ respectively. As it was shown in~\cite{shen}, page $540$,
	\begin{equation}
	\Gamma(x,y)- \Gamma_0(x,y) = -\int_{\RR^d}\Gamma_0(x,\xi)V(\xi)\Gamma(y,\xi) d\xi.
	\end{equation} 
	From this we get the following expression for the difference of the kernels.
	\begin{equation}
	K_2(x,y)- K_{0,2}(x,y) = \nabla_1^2\Gamma(x,y)- \nabla_1^2\Gamma_0(x,y) = -\nabla_1^2\int_{\RR^d}\Gamma_0(x,\xi)V(\xi)\Gamma(y,\xi) d\xi.
	\end{equation} 
	Next, we define the following domains: $J_1=B(x_0,R/4)$, $J_2=B(y,R/4)$ and $J_3=(J_1\cup J_2)^c$. The term corresponding to the integral over $J_1$ is, upon a sign, the classical second order Riesz transform applied to function in $L^q$ with compact support, that is
	\begin{equation}
	|\nabla_1^2\int_{J_1}\Gamma_0(x,\xi)V(\xi)\Gamma(y,\xi)d\xi| = |R_2(V\Gamma(y,\cdot)\chi_{B(x_0,R/4)})(x)|.
	\end{equation}
	
	On $J_2$, since we are away from the singularity of $\Gamma_0$, we can use the size estimates for $\Gamma$ and $\Gamma_0$ together with H\"older's inequality to obtain
	\begin{equation}
	\begin{split}
	\left|\int_{J_2}\nabla_1^2\right.&\Gamma_0(x,\xi)V(\xi)\Gamma(y,\xi)d\xi\Bigg|
	\\ & \leq
	\frac{C}{R^d} \int_{B(y,R/4)} \frac{V(\xi)}{|y-\xi|^{d-2}}d\xi
	\\ & \leq
	\frac{C}{R^d} \left(\int_{B(y,R/4)}V^q(\xi)d\xi\right)^{\tfrac{1}{q}}
	\left(\int_{B(y,R/4)}\frac{d\xi}{|y-\xi|^{(d-2)q'}}\right)^{\tfrac{1}{q'}}.
	\end{split}
	\end{equation}
	For the first integral we can use the reverse H\"older condition for $V$ together with Lemma~\ref{lem-V}, while on the second integral $q>d/2$ implies that $(d-2)q'<d$. Then
	\begin{equation}
	\left|\int_{J_2}\nabla_1^2\Gamma_0(x,\xi)V(\xi)\Gamma(y,\xi)d\xi\right| \lesssim
	\frac{1}{R^d}\left(\frac{R}{\rho(x_0)}\right)^{2-d/q},
	\end{equation}
	since $y\in B(x_0,\rho(x_0))$.
	
	To estimate the integral on $J_3$ we divide in $J_{31}\cup J_{32}$, where
	$J_{31}=\{\xi \in \RR^d: R/4\leq|y-\xi|<2R \,\land\, |x_0-\xi|\geq R/4\}$ and  $J_{32}=\{\xi \in \RR^d:|y-\xi|\geq 2R\}.$
	On $J_{31}$ we are away from the singularities of both $\Gamma$ y $\Gamma_0$, then
	\begin{equation}
	\begin{split}
	\left|\int_{J_{31}}\nabla_1^2\Gamma_0(x,\xi)V(\xi)\Gamma(y,\xi)d\xi\right|
	& \lesssim
	\int_{J_{31}} \frac{V(\xi)}{|x-\xi|^d |y-\xi|^{d-2}} d\xi \\ 
	& \lesssim
	 \frac{1}{R^{2d-2}} \int_{B(y,2R)} V(\xi)d\xi \\
	 & \lesssim
	\frac{1}{R^d} \left(\frac{R}{\rho(x_0)}\right)^{2-d/q},
	\end{split}
	\end{equation}
	where we in the last inequality, have used again Lemma~\ref{lem-V}.
	
	Regarding $J_{32}$ it is easy to check that $|x-\xi|\geq 3|y-\xi|/8$, so 
	\begin{equation}
	\begin{split}
	\left|\int_{J_{32}}\nabla_1^2\Gamma_0(x,\xi)V(\xi)\Gamma(y,\xi)d\xi\right|
	& \leq C_N
	\int_{J_{32}} \frac{V(\xi)}{|x-\xi|^d |y-\xi|^{d-2}} \left(1+\frac{|y-\xi|}{\rho(y)}\right)^{-N} d\xi  
	\\ & \leq C_N
	\int_{J_{32}} \frac{V(\xi)}{ |y-\xi|^{2d-2}}\left(1+\frac{|y-\xi|}{\rho(y)}\right)^{-N} d\xi.
	\end{split}
	\end{equation}
	
	Assume firts that $2R<\rho(y)$. We split the integral in $J_{321}=\{\xi\in\RR^d: 2R\leq |y-\xi| <\rho(y)\}$ and
	$J_{322}=\{\xi\in\RR^d: |y-\xi|\geq \rho(y)\}$. For the integral on $J_{321}$, let $k_0\in\NN$ such that  $2^{k_0-1}R\leq\rho(y)\leq2^{k_0}R $. Then using Lemma~\ref{lem-V} and that $d>2-d/q$,
	\begin{equation}
	\begin{split}
	\int_{2R\leq |y-\xi| <\rho(y)} \frac{V(\xi)}{ |y-\xi|^{2d-2}}d\xi 
	& \leq
	\sum_{k=2}^{k_0}\int_{2^{k-1}R\leq|y-\xi|< 2^kR} \frac{V(\xi)}{|y-\xi|^{2d-2}}
	\\ & \lesssim
	\sum_{k=1}^{k_0} \frac{1}{(2^kR)^d} \frac{1}{(2^kR)^{d-2}} \int_{B(y,2^kR)} V(\xi)d\xi 
	\\ & \lesssim 
	\frac{1}{R^d} \sum_{k=1}^{k_0} 2^{-kd} \left(\frac{2^kR}{\rho(y)}\right)^{2-d/q}
	\\ & \lesssim
	\frac{1}{R^d} \left(\frac{R}{\rho(x_0)}\right)^{2-d/q},
	\end{split}
	\end{equation}
	since $\rho(y)\simeq\rho(x_0)$.
	
	On $J_{322}$, let $\mu=\log_2 C_1$, where $C_1$ is the doubling constant of the potential $V$. Then we have
	\begin{equation}
	\begin{split}
	\int_{|x-\xi| \geq\rho(y)} &\frac{V(\xi)}{ |y-\xi|^{2d-2}}\left(\frac{\rho(y)}{|y-\xi|}\right)^N d\xi 
	\\& \lesssim
	\sum_{k=1}^{\infty}\frac{1}{2^{kN}}\int_{2^{k-1}\rho(y)|y-\xi|< 2^{k+}\rho(y)} \frac{V(\xi)}{|y-\xi|^{2d-2}}
	\\ & \leq
	\sum_{k=1}^{\infty}\frac{1}{2^{k(2d-2+N)}\rho(y)^{2d-2}}\int_{B(y,2^k\rho(y))} V(\xi)d\xi
	\\ & \lesssim 
	\frac{1}{\rho(y)^d}\sum_{k=1}^{\infty} \frac{1}{2^{k(2d-2+N-\mu)} \rho(y)^{d-2}}\int_{B(y,\rho(y))}V(\xi)d\xi
	\\ & \lesssim 
	\frac{1}{\rho(y)^d} \leq
	\frac{1}{R^d} \left(\frac{R}{\rho(x_0)}\right)^{2-d/q},
	\end{split}
	\end{equation}
	choosing $N$ big enough and using that $\rho(y)\simeq\rho(x_0)$, $R<\rho(x_0)$ and $2-d/q<d$.
	
\end{proof}

As an applycation of Lemma~\ref{lem-comparacionR2}, Theorem~\ref{teo-prin} and Theorem~\ref{teo-tipo debil} we obtain the following inequalities for $\mathcal{R}_2$.

\begin{thm}\label{teo-R1}
	Let $V\in RH_q$ for $q>d/2$, and $\theta\geq 0$. Then, for any weight $w$ the following inequalities hold.
	\begin{equation}
	\int |\mathcal{R}_2 f|^p w \leq C_{\theta}\int |f|^p M_r^{\theta}w,
	\end{equation}
	for $1<p<q$ and $r=(q/p)'$,
	\begin{equation}
	\int |\mathcal{R}_2^{\star}f|^{p} w \leq C_{\theta,A}\int |f|^{p} (M_A^{loc}+ M^{\theta})w,
	\end{equation}
	for $q'<p<\infty$ and any Young function $A\in\mathcal{D}_p$,
	\begin{equation}
	w(\{|\mathcal{R}_2 f|>\lambda\})\leq 
	\frac{C_\theta}{\lambda}\int |f| M_{q'}^{\theta}w.
	\end{equation}
\end{thm}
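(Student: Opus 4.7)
The strategy is to verify that the kernel $K_2$ of $\mathcal{R}_2$ satisfies conditions~\ref{cond-a} and~\ref{cond-b} of Theorem~\ref{teo-prin} with $s=q$, taking $T_0=R_2=\nabla^2(-\Delta)^{-1}$ (whose kernel $K_{0,2}$ is a standard Calder\'on--Zygmund kernel). Once this is done, the three inequalities follow at once: the first from~\eqref{eq-desigualdad-derechas} applied with $1<p<q$ (so that $r=(q/p)'$), the second from~\eqref{eq-desigualdad-adjuntas} applied to $\mathcal{R}_2^{\star}$ with $q'<p<\infty$ and any $A\in\mathcal{D}_p$, and the third from~\eqref{eq-desigualdad-tipodebil} with $s'=q'$.

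Condition~\ref{cond-a} for $K_2$ is already available in~\cite{BHQ2018}, so it can be cited. For condition~\ref{cond-b} it is more convenient to verify its equivalent form~\ref{cond-b'} given by Lemma~\ref{lem-condicionesequivalentes}. Fix $x_0\in\RR^d$, $R\leq\rho(x_0)$ and $y$ with $R<|y-x_0|<2R$; cover $B(x_0,R/2)$ by finitely many balls of radius $R/16$ centered at points $x_0^{*}\in B(x_0,R/2)$, on each of which Lemma~\ref{lem-comparacionR2} applies (with radius parameter $R/2$ in place of $R$). The lemma yields, for $x$ in such a sub-ball,
\begin{equation*}
|K_2(x,y)-K_{0,2}(x,y)| \leq C\,\bigl|R_2\bigl(V\Gamma(y,\cdot)\chi_{B(x_0^{*},R/8)}\bigr)(x)\bigr| + \frac{C}{R^d}\left(\frac{R}{\rho(x_0)}\right)^{\delta}.
\end{equation*}
The second term contributes the admissible bound $CR^{-d/q'}(R/\rho(x_0))^{\delta}$ after integration in $L^q$ over $B(x_0,R/2)$ and taking the $q$-th root. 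For the first, the classical $L^q\to L^q$ boundedness of $R_2$ reduces the task to estimating $\|V\Gamma(y,\cdot)\chi_{B(x_0^{*},R/8)}\|_{L^q}$. Since $|y-\xi|\sim R$ on $B(x_0^{*},R/8)$ we have $\Gamma(y,\xi)\lesssim R^{2-d}$, and the reverse H\"older property of $V$ together with Lemma~\ref{lem-V} (which gives $R^{2-d}\int_{B(x_0,R)}V\lesssim (R/\rho(x_0))^{2-d/q}$ when $R\leq\rho(x_0)$) produces the bound $R^{-d/q'}(R/\rho(x_0))^{2-d/q}$. Summing over the finite cover establishes~\ref{cond-b'} with $\delta=\min\{1,2-d/q\}>0$.

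The main technical point is the handling of the singular term $R_2(V\Gamma(y,\cdot)\chi_{B(x_0^{*},R/8)})$: Lemma~\ref{lem-comparacionR2} produces not a pointwise estimate on the kernel difference but the classical Riesz transform of a compactly supported $L^q$-function, and it is the $L^q\to L^q$ boundedness of $R_2$ combined with the reverse H\"older hypothesis $V\in RH_q$ (through Lemma~\ref{lem-V}) that forces the integrability exponent $s=q$ in~\ref{cond-b} and therefore the ranges $1<p<q$, $q'<p<\infty$ and the maximal operator $M_{q'}^{\theta}$ appearing in the three conclusions of the theorem.
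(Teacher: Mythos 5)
Your proof is correct and follows essentially the same route as the paper: cite \cite{BHQ2018} for condition $(a_s)$ with $s=q$, then verify $(b_s')$ via Lemma~\ref{lem-comparacionR2}, the $L^q$-boundedness of $R_2$, the size estimate for $\Gamma$, the reverse H\"older property of $V$, and Lemma~\ref{lem-V}, and finally invoke Theorems~\ref{teo-prin} and~\ref{teo-tipo debil}. The one place you are actually more careful than the paper is the finite covering of $B(x_0,R/2)$ by balls of radius $R/16$; the paper applies Lemma~\ref{lem-comparacionR2} directly over $B(x_0,R/2)$ even though the lemma is stated only for $x\in B(x_0,R/8)$, so your extra covering step closes a small gap that the paper leaves implicit.
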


\begin{proof}
	As we was say before, it only remains to check condition~\ref{cond-b'} for the kernel $K_2$. Let $x_0$, $y\in\RR^d$ and $R>0$ such that $R<|y-x_0|<2R$ and $R\leq \rho(x_0)$. We are going to check condition~\ref{cond-b'} with $s=q$ using Lemma~\ref{lem-comparacionR2},
	\begin{equation*}
	\begin{split}
	\Bigg(\int_{B(x_0,R/2)}&|K_2(x,y)-K_{2,0}(x,y)|^q dx\Bigg)^{1/q}
	\\& \leq  	\left(\int_{B(x_0,R/2)}\left(|R_2(V\Gamma(y,\cdot)\chi_{B(x_0,R/4)})(x)|
	 + \frac{C}{R^d}\left(\frac{R}{\rho(x_0)}\right)^{\delta}\right)^q dx\right)^{1/q}.
	\end{split}
	\end{equation*}  

Dividing the integral in two terms it is straightforward that the second one gives us the desired estimate. For the first one, recalling that $R_2$ is a bounded operator on $L^q$ for $1<q<\infty$, and applying Lemma~\ref{lem-V},
\begin{equation}
\begin{split}
\Bigg(\int_{B(x_0,R/2)}|R_2(V\Gamma(y,\cdot)\chi_{B(x_0,R/4)})&(x)|^q
dx\Bigg)^{1/q}
\\ & \lesssim
\Bigg(\int_{B(x_0,R/4)}V^q(x)|\Gamma(y,x)|^q 
dx\Bigg)^{1/q}
\\ & \lesssim
\frac{1}{R^{d-2}}\left(\int_{B(x_0,R/4)} V^q\right)^{1/q}
\\ & \lesssim
R^{-d/q'} \left(\frac{R}{\rho(x_0)}\right)^{2-d/q}.
\end{split}
\end{equation}

\end{proof}

For $\mathcal{R}_1$ different inequalities hold true depending on $q$. For $q>d$, Shen showed in~\cite{shen} that $\mathcal{R}_1$ and $\mathcal{R}_1^{\star}$ are Calder\'on-Zygmund operators. Moreover, their associated kernels satisfy the stronger size condition~\eqref{cond-a_infty} (see inequality (6.5) there). Later on, condition \eqref{cond-b_infty} was proved for the difference between $K_1$ and $K_{1,0}$ (see~\cite{BHS-RieszJMAA} Lemma~$3$).

Therefore, as an application of Theorem~\ref{teo-prin}, Corollary~\ref{cor-infty} and Theorem~\ref{teo-tipo debil} we obtain the following result.

\begin{thm} 
	Let $V\in RH_q$ for $q>d/2$, $w\geq 0$, $w\in L^1_{loc}$ and $\theta\geq 0$. Let $p_0$ such that $1/p_0=(1/q-1/d)^+$ Then, for $1<p<p_0$ the following inequalities hold.
	\begin{equation}\label{eq-R1aux1}
	\int |R_1 f|^p w \leq C_{\theta}\int |f|^p M_r^{\theta}w,
	\end{equation}
	for $1<p<p_0$ and $r=(p_0/p)'$,
	\begin{equation}\label{eq-R1aux2}
	\int |\mathcal{R}_1^{\star}f|^{p} w \leq C_{\theta}\int |f|^{p} (M_A^{loc}w + M^{\theta}w),
	\end{equation}
	for $p_0'<p<\infty$ and any Young function $A\in \mathcal{D}_p$,
	\begin{equation}\label{eq-R1aux3}
	w(\{|\mathcal{R}_1 f|>\lambda\})\leq 
	\frac{C_\theta}{\lambda}\int |f| M_{p_0'}^{\theta}w.
	\end{equation}
	Moreover, if $q>d$, we have
	\begin{equation}\label{eq-R1aux4}
	\int |\mathcal{R}_1 f|^p w \leq C_{\theta}\int |f|^p (M_A^{loc}w + M^{\theta}w),
	\end{equation}
	for $1<p<\infty$ and any Young function $A\in\mathcal{D}_p$ and
	\begin{equation}\label{eq-R1aux5}
	w(\{|\mathcal{R}_1 f|>\lambda\})\leq 
	\frac{C_\theta}{\lambda}\int |f| M_{A}^{\theta}w,
	\end{equation}
	\begin{equation}\label{eq-R1aux6}
	w(\{|\mathcal{R}_1^{\star} f|>\lambda\})\leq 
	\frac{C_\theta}{\lambda}\int |f| M_{A}^{\theta}w,
	\end{equation}
	for any Young function $A\in\bigcup_{p>1}\mathcal{D}_p$.
\end{thm}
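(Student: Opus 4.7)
The plan is to apply Theorem~\ref{teo-prin}, Corollary~\ref{cor-infty} and Theorem~\ref{teo-tipo debil} to $T=\mathcal{R}_1$, taking as comparison operator the classical Riesz transform $T_0=R_1$. In the range $q>d$ (so $p_0=\infty$), the pointwise conditions $(a_\infty)$ for $K_1$ and $(b_\infty)$ for $K_1-K_{0,1}$ are the estimates of~\cite{shen} and Lemma~$3$ of~\cite{BHS-RieszJMAA}; since both are symmetric in $x$ and $y$, Corollary~\ref{cor-infty} directly yields \eqref{eq-R1aux4} and the version of \eqref{eq-R1aux2} valid for all $1<p<\infty$, while Theorem~\ref{teo-tipo debil} produces the weak-type bounds \eqref{eq-R1aux5} and \eqref{eq-R1aux6} for both $\mathcal{R}_1$ and $\mathcal{R}_1^\star$. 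The bulk of the argument is therefore the case $d/2<q<d$, where $p_0=qd/(d-q)<\infty$, and I would verify the integral conditions $(a_{p_0})$ and $(b_{p_0})$.

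Condition $(a_{p_0})$ for $K_1$ is precisely what was established in~\cite{BHQ2018}. By Lemma~\ref{lem-condicionesequivalentes}, $(b_{p_0})$ is equivalent to $(b_{p_0}')$, and this is what I would check starting from the pointwise control in Lemma~\ref{lem-compR1}. Fix $x_0,y\in\RR^d$ and $R>0$ with $R<|y-x_0|<2R$ and $R\leq\rho(x_0)$. For $x\in B(x_0,R/2)$ one has $|x-y|\sim R$ and $\rho(x)\sim\rho(x_0)$ by Lemma~\ref{lem-trucho}; hence the second term on the right-hand side of Lemma~\ref{lem-compR1} is bounded pointwise by $C R^{-d}(R/\rho(x_0))^{2-d/q}$, whose $L^{p_0}$ norm over $B(x_0,R/2)$ gives exactly the factor $R^{-d/p_0'}(R/\rho(x_0))^{2-d/q}$ required by $(b_{p_0}')$.

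The essential step is the estimate of the Riesz-potential piece $G(x,y)/|x-y|^{d-1}$. Since $|x-y|\sim R$, the defining ball $B(x,|x-y|/4)$ is contained in some $B(x_0,cR)$, so $G(x,y)\leq C\, I_1(V\chi_{B(x_0,cR)})(x)$, with $I_1$ the Riesz potential of order $1$. The Sobolev identity $1/q-1/p_0=1/d$ makes Hardy--Littlewood--Sobolev applicable, giving
\[
\left(\int_{B(x_0,R/2)}G(x,y)^{p_0}\,dx\right)^{1/p_0}\lesssim \left(\int_{B(x_0,cR)}V^q\right)^{1/q}.
\]
The reverse H\"older hypothesis~\eqref{eq-RH} turns this into $C R^{d/q-d}\int_{B(x_0,cR)}V$, and Lemma~\ref{lem-V} together with $R\leq\rho(x_0)$ bounds the remaining $L^1$ integral by $C R^{d-2}(R/\rho(x_0))^{2-d/q}$, yielding the product $C R^{d/q-2}(R/\rho(x_0))^{2-d/q}$. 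Multiplying by the outer factor $R^{-(d-1)}$ and combining with the estimate from the previous paragraph establishes $(b_{p_0}')$ with $\varepsilon=2-d/q>0$. Theorem~\ref{teo-prin} then produces \eqref{eq-R1aux1} and \eqref{eq-R1aux2}, and Theorem~\ref{teo-tipo debil} produces \eqref{eq-R1aux3}.

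The main obstacle is exactly the Hardy--Littlewood--Sobolev step: the Sobolev exponent $p_0=qd/(d-q)$ is what determines the range of $p$ in~\eqref{eq-R1aux1}--\eqref{eq-R1aux3} and reflects the sub-Calder\'on--Zygmund behaviour of $K_1-K_{0,1}$ when $q\leq d$. The remaining ingredients---patching the two terms in Lemma~\ref{lem-compR1}, passing between $(a_s)$--$(a_s')$ and $(b_s)$--$(b_s')$ via Lemma~\ref{lem-condicionesequivalentes}, and using that both $R_1$ and $R_1^\star$ are genuine Calder\'on--Zygmund operators---are routine bookkeeping.
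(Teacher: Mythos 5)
Your proposal is correct and follows essentially the same route as the paper: verify $(a_\infty)$ and $(b_\infty)$ and invoke Corollary~\ref{cor-infty} and Theorem~\ref{teo-tipo debil} when $q>d$, and for $d/2<q<d$ verify $(a_{p_0}')$ and $(b_{p_0}')$ starting from Lemma~\ref{lem-compR1}, handling the $G(x,y)$ term via $G(x,y)\le C\,I_1(V\chi_{B(x_0,cR)})(x)$ and the $L^q\to L^{p_0}$ boundedness of $I_1$, then the reverse H\"older inequality and Lemma~\ref{lem-V}, finishing with Theorems~\ref{teo-prin} and~\ref{teo-tipo debil}. The exponent bookkeeping ($R^{d/q-d-1}=R^{-d/p_0'}$) checks out, and the paper carries out exactly this fractional-integral computation in its display~\eqref{eq:compHormA(x,y)}.
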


\begin{proof}
Let $V\in RH_q$ for $q>d/2$. Suppose first that $q<d$ and let us show  that $K_1$ satisfy conditions~\ref{cond-b'} and~\ref{cond-a'} for $s=p_0$ with $1/p_0=1/q-1/d$. For~\ref{cond-b'} let $x_0\in\RR^d$, $0<R\leq\rho(x_0)$ and $R<|y-x_0|<2R$. First, we make use of Lemma~\ref{lem-compR1}. Due to the boundedness of the classical fractional integral operator $I_1$ and the reverse H\"older property of $V$ we get that, for $G$ defined in~\eqref{eq-defG},
\begin{equation}\label{eq:compHormA(x,y)}
\begin{split}
\left(\int_{B(x_0,R/2)}\!\!\left(\frac{G(x,y)}{|x-y|^{d-1}}\right)^{p_0} \!\!\!\!dx\right)^{1/p_0}\!\!\!\!\!\!\!\! & \leq \frac{C}{R^{d-1}}\left(\int_{B(x_0,R/2)}\!\!\left(\int_{B(x_0,R)}\frac{V(u)}{|u-x|}du\right)^{p_0}\!\!\!\!\!\!dx\right)^{1/p_0} 
\\ & \leq 
\frac{C}{R^{d-1}} \left(\int_{\RR^d} |I_1(\chi_{B(x_0,R)}V)|^{p_0}\right)^{1/p_0}
\\ & \leq 
\frac{C}{R^{d-1}} \left(\int_{B(x_0.R)} V^{q}\right)^{1/q}
\\ & \leq 
C\frac{R^{d/q-d}}{R^{d-1}}  \int_{B(x_0,R)} V
\\ & \leq 
C R^{-d/p_0'}\left(\frac{R}{\rho(x_0)}\right)^{2-d/q},
\end{split}
\end{equation}
where, in the last inequality, we have used Lemma~\ref{lem-V}. As for the second term appearing in Lemma~\ref{lem-compR1}, the same estimate holds easily. To check the size condition~\ref{cond-a'} we make use of the following estimate that can be found on page 538 of~\cite{shen}. For every $N>0$ there exists a constant $C_N$ such that
\begin{equation}
|K_1(x,y)|\leq C_N\left(1+\frac{|x-y|}{\rho(x)}\right)^{-N}
\frac{1}{|x-y|^{d-1}}\left(
G(x,y) + \frac{1}{|x-y|}\right),
\end{equation}
with $G$ as above. This estimate, together with a similar argument as in~\eqref{eq:compHormA(x,y)} gives us \ref{cond-a'} for $s=p_0$. Therefore, inequalities~\eqref{eq-R1aux1},~\eqref{eq-R1aux2} and~\eqref{eq-R1aux3} follow as an application of Theorems~\ref{teo-prin} and~\ref{teo-tipo debil}.

Next, suppose that $q>d$. In this case, it is known that $K_1$ satisfy the point-wise estimates~\ref{cond-a_infty} and~\ref{cond-b_infty}. For the size condition we refer to inequality~$(6.5)$ in~\cite{shen}. Condition~\ref{cond-b_infty} was stated and proved in~\cite{BHS-RieszJMAA}, Lemma~3. Thus, applying now Corollary~\ref{cor-infty} and Theorem~\ref{teo-tipo debil} we obtain inequalities~\eqref{eq-R1aux4},~\eqref{eq-R1aux5} and~\eqref{eq-R1aux6}. 

\end{proof}

\subsection{Operators $V^{\gamma}L^{-\gamma}$}
We consider, for $V\in RH_q$, $q>d/2$, the family of operators of type $V^{\gamma}L^{-\gamma}$ for $0<\gamma<d/2$. For each $\gamma$, we can write $K_\gamma$, the kernel of $V^{\gamma}L^{-\gamma}$, as
\begin{equation*}
K_\gamma(x,y)=V^\gamma(x) J_{\gamma}(x,y),
\end{equation*}
where $J_\gamma$ is the corresponding kernel of the fractional integral operator $L^{-\gamma}$. For $J_\ga$ we have the following estimate that can be found in~\cite{MR3180934_stinga}, page $587$. For each $N>0$ there exists $C_N$ such that
\begin{equation}\label{eq:tam_Jgamma}
|J_{\gamma}(x,y)|\leq \frac{1}{|x-y|^{d-2\gamma}} 
{C_N}{\left(1+\frac{|x-y|}{\rho(x)}\right)^{-N}}.
\end{equation}
 We will show next that the size estimate for $J_\gamma$ gives us condition~\ref{cond-c} for $K_\gamma$ with $s=q/\gamma$. In fract, let $x_0$, $y\in\RR^d$ and $R$ such that $|y-x_0|<R/2$. Applying Lemmas~\ref{lem-trucho} and~\ref{lem-V} we get
\begin{equation}
\begin{split}
\Bigg(\int_{R<|x-x_0|<2R}&|K_\gamma(x,y)|^{q/\gamma}dx\Bigg)^{\gamma/q}
 \\ & \leq \frac{C_N}{R^{d-2\gamma}} \left(1+\frac{R}{\rho(x_0)}\right)^{-N/N_0}
 \left(\int_{B(x_0,2R)}V^q\right)^{\gamma/q}
\\ & \lesssim R^{-d/(q/\gamma)'}\left(1+\frac{R}{\rho(x_0)}\right)^{-N/N_0+\ga N1}
\left(1+\frac{\rho(x_0)}{R}\right)^{-\gamma(2-d/q)}.
\end{split}
\end{equation}

The above estimate together with Theorem~\ref{teo-deigualdades-operad-V} give us the following result.
\begin{thm}\label{teo:Vcongamma}
	Let $V\in RH_q$ for $q>d/2$, $0<\gamma<d/2$ and $\theta\geq 0$. Then, for any weight $w$,
	\begin{equation}
	\int |V^{\gamma}L^{-\gamma} f|^p w \leq C_{\theta}\int |f|^p M_r^{\theta}w,
	\end{equation}
	for $1\leq p < q/\gamma$, $r=(q/(\gamma p))'$ and
	\begin{equation}
	\int |L^{-\gamma} V^{\gamma}f|^{p} w \leq C_{\theta}\int |f|^{p} M^{\theta}w,
	\end{equation}
	for $(q/\gamma)'<p<\infty$.
	\end{thm}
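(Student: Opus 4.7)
The plan is to reduce Theorem~\ref{teo:Vcongamma} to a direct application of Theorem~\ref{teo-deigualdades-operad-V} by verifying that the kernel $K_\gamma(x,y) = V^\gamma(x)\,J_\gamma(x,y)$ of $V^\gamma L^{-\gamma}$ satisfies condition~\ref{cond-c} with $s = q/\gamma$ and decay exponent $\delta = \gamma(2 - d/q)$, which is positive because $q > d/2$ and $\gamma < d/2$.

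To verify this, I would fix $x_0, y \in \RR^d$ and $R > 0$ with $|y - x_0| < R/2$, and compute the $L^{q/\gamma}$ norm of $K_\gamma(\cdot, y)$ on the annulus $R < |x - x_0| < 2R$. The factor $J_\gamma(x,y)$ carries the pointwise decay \eqref{eq:tam_Jgamma}; since $|x - y| \asymp R$ on the annulus, it supplies the size $R^{-(d-2\gamma)}$ together with an arbitrary polynomial decay in $1 + R/\rho(x)$, which I would convert to a factor in $\rho(x_0)$ via Lemma~\ref{lem-trucho}. The remaining mass $V^\gamma(x)$ is handled by the identity $\|V^\gamma\|_{L^{q/\gamma}(B)} = \|V\|_{L^q(B)}^\gamma$ followed by the reverse H\"older property and Lemma~\ref{lem-V}, which bounds $\frac{1}{|B|} \int V$ by powers of $1 + R/\rho(x_0)$ and $1 + \rho(x_0)/R$. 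Combining the pieces, the exponents on $R$ conspire to give precisely $R^{-d/(q/\gamma)'}$, and the required negative power of $1 + \rho(x_0)/R$ in \ref{cond-c} emerges with exponent $\delta = \gamma(2 - d/q)$; the spurious positive powers of $1 + R/\rho(x_0)$ that appear along the way are absorbed by choosing $N$ large in \eqref{eq:tam_Jgamma}.

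Once condition $(c_{q/\gamma})$ is confirmed, Theorem~\ref{teo-deigualdades-operad-V} immediately supplies inequality~\eqref{eq-desigualdad-derechas} for $T = V^\gamma L^{-\gamma}$ in the range $1 \leq p < q/\gamma$ with $r = (q/(\gamma p))'$, and inequality~\eqref{eq-desigualdad-adjuntas_conV} for $T^\star = L^{-\gamma} V^\gamma$ in the dual range $(q/\gamma)' < p < \infty$. There is no deep obstacle here; the only delicate point is the bookkeeping of the exponents of $R$ and $\rho(x_0)$ when combining the estimate for $V^\gamma$ with the decay of $J_\gamma$, and verifying that the hypotheses $q > d/2$ and $\gamma < d/2$ genuinely ensure both $\delta > 0$ and $s = q/\gamma > 1$, so that the advertised range of $p$ is nonempty.
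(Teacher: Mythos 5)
Your proposal matches the paper's proof exactly: the paper verifies condition $(c_s)$ for $K_\gamma(x,y) = V^\gamma(x) J_\gamma(x,y)$ with $s = q/\gamma$ by combining the pointwise decay \eqref{eq:tam_Jgamma} of $J_\gamma$, Lemma~\ref{lem-trucho} to pass from $\rho(x)$ to $\rho(x_0)$, and the reverse H\"older condition together with Lemma~\ref{lem-V} to control $\big(\int_{B(x_0,2R)} V^q\big)^{\gamma/q}$, arriving at the decay exponent $\delta = \gamma(2 - d/q)$, and then invokes Theorem~\ref{teo-deigualdades-operad-V}. No differences in strategy or substance.
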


\subsection{Operators $V^{\gamma-1/2}\nabla L^{-\gamma}$}
We consider the family of operators $V^{\gamma-1/2}\nabla L^{-\gamma}$ for $1/2<\gamma\leq 1$ that includes the operator $L^{-1}\nabla V^{1/2}$ which appeared first in~\cite{shen}. In~(cita) it was shown that the associated kernel $K^{\gamma}$ can be written as the product $K^{\gamma}(x,y)=V^{\nu/2}(x)K_{\nu}(x,y)$, with $K_{\nu}$ a fractional kernel of order $\nu=2\gamma-1$, satisfying for each $N$,
\begin{equation}\label{tamKnubueno}
|K_{\nu}(x,y)|\leq \frac{C_N}{|x-y|^{d-2\gamma+1}}\left(1+\frac{|x-y|}{\rho(y)}\right)^{-N},
\end{equation}
if $V\in RH_q$ with $q>d$ and 
\begin{equation}\label{tamKnumalo}
\begin{split}
\left(\int_{R<|x-y|<2R}|K_\nu(x,y)|^{p_0} dx\right)^{1/p_0} & 
\leq {C} R^{-d/p_0'+2\gamma-1}\left(1+\frac{R}{\rho(y)}\right)^{-N},
\end{split}
\end{equation}
when $d/2< q < d$, with $p_0$ such that $1/p_0=1/q-1/d$.

 We will show now that these estimates imply condition~\ref{cond-c} for $s=p_\gamma$ such that
\begin{equation}\label{eq-def-s-Knu}
\frac{1}{p_\gamma}=\left(\frac{1}{q}-\frac{1}{d}\right)^+ + \frac{2\gamma-1}{2q}.
\end{equation}

Let $x_0$, $y\in\RR^d$ and $R>0$ such that $|y-x_0|<R/2$. If $q>d$, and hence $\frac{1}{p_\gamma}=\frac{2\gamma-1}{2q}$, we may use estimate~\eqref{tamKnubueno}, condition \eqref{eq-RH} and Lemma~\ref{lem-V} to get
\begin{equation}
\begin{split}
\Bigg(\int_{R<|x-x_0|<2R}&|K_\gamma(x,y)V^{\gamma-1/2}(x)|^{\frac{2q}{2\gamma-1}}dx\Bigg)^{\frac{2\gamma-1}{2q}}
\\& \leq \frac{C_N}{R^{d-2\gamma+1}}\left(\int_{B(x_0,2R)}V^q\right)^{\frac{2\gamma-1}{2q}}
\left(1+\frac{R}{\rho(x_0)}\right)^{-N}
\\ & \lesssim R^{-d/p_\gamma'}\left(1+\frac{\rho(x_0)}{R}\right)^{-(\gamma-1/2)(2-d/q)}\left(1+\frac{R}{\rho(x_0)}\right)^{-N+N_1(\ga-1/2)}.
\end{split}
\end{equation}

If $d/2<q<d$, now we have $\frac{1}{p_\gamma}=\frac{1}{p_0} + \frac{2\gamma-1}{2q}$. Then, by Holder's inequality together with~\eqref{tamKnumalo} and Lemma~\ref{lem-V} as above we obtain
\begin{equation}
\begin{split}
\Bigg(\int_{R<|x-x_0|<2R}&|K_\gamma(x,y)V^{\gamma-1/2}(x)|^{p_\gamma}dx\Bigg)^{1/p_\gamma}
\\ &\leq  \Bigg(\int_{R<|x-x_0|<2R}|K_\gamma(x,y)|^{p_0}dx\Bigg)^{1/p_0}
\left(\int_{B(x_0,2R)}V^q\right)^{\frac{2\gamma-1}{2q}}
\\ & \lesssim R^{-d/p_\gamma'}\left(1+\frac{\rho(x_0)}{R}\right)^{-(\gamma-1/2)(2-d/q)}\left(1+\frac{R}{\rho(x_0)}\right)^{-N+N_1(\ga-1/2)}.
\end{split}
\end{equation}

Applying the above estimates and Theorem~\ref{teo-deigualdades-operad-V} we obtain the following result.
\begin{thm}\label{teo:mixtos}
	Let $V\in RH_q$ for $q>d/2$, $1/2<\gamma\leq1$, and $\theta\geq 0$. 
	Then if $p_\gamma$ is given by~\eqref{eq-def-s-Knu}, for any weight $w$ we have
	\begin{equation}
	\int |V^{\gamma-1/2}\nabla L^{-\gamma} f|^p w \leq C_{\theta}\int |f|^p M_r^{\theta}w,
	\end{equation}
	for $1\leq p<p_\gamma$ with $r=(p_\gamma/p)'$, and
	\begin{equation}
	\int |L^{-\gamma}\nabla V^{\gamma-1/2}f|^{p} w \leq C_{\theta}\int |f|^{p} M^{\theta}w,
	\end{equation}
	for $p_\gamma'<p<\infty$.
\end{thm}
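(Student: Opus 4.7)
The plan is to verify condition~\ref{cond-c} of Theorem~\ref{teo-deigualdades-operad-V} for the kernel $K^{\gamma}$ of $V^{\gamma-1/2}\nabla L^{-\gamma}$ with $s = p_{\gamma}$, and then invoke that theorem directly to obtain both stated inequalities. Essentially all the analytic work has been prepared in the discussion preceding the theorem: the factorization $K^{\gamma}(x,y) = V^{\gamma-1/2}(x)K_{\nu}(x,y)$ with $\nu = 2\gamma - 1$, and the two size estimates~\eqref{tamKnubueno} and~\eqref{tamKnumalo} on $K_{\nu}$ according to whether $q > d$ or $d/2 < q < d$.

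The main computation is to estimate $\bigl(\int_{R<|x-x_0|<2R}|K^{\gamma}(x,y)|^{p_{\gamma}}dx\bigr)^{1/p_{\gamma}}$ when $|y-x_0| < R/2$. First I would split this using H\"older's inequality so that the $V^{\gamma-1/2}$ factor is paired with the $L^{2q/(2\gamma-1)}$ norm (forced by the reverse H\"older exponent $q$ for $V^{q}$), and $K_{\nu}$ is paired with its natural $L^{p_0}$ (or $L^{\infty}$ if $q>d$) norm. The choice~\eqref{eq-def-s-Knu} of $p_\gamma$ is exactly the one that makes the H\"older exponents match. For the $K_{\nu}$ factor I apply~\eqref{tamKnubueno} or~\eqref{tamKnumalo}, and then use Lemma~\ref{lem-trucho} to replace $\rho(y)$ by $\rho(x_0)$ in the exponential-type decay $(1 + R/\rho(y))^{-N}$, at the cost of weakening $N$. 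For the $V^{\gamma-1/2}$ factor I apply the reverse H\"older inequality followed by Lemma~\ref{lem-V}, which produces the crucial factor $(1 + \rho(x_0)/R)^{-(\gamma-1/2)(2-d/q)}$ together with a polynomial correction $(1+R/\rho(x_0))^{N_1(\gamma-1/2)}$, absorbable into the large-$N$ decay.

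Putting these together yields
\begin{equation*}
\left(\int_{R<|x-x_0|<2R}|K^{\gamma}(x,y)|^{p_{\gamma}}dx\right)^{1/p_{\gamma}}
\lesssim R^{-d/p_\gamma'}\left(1+\frac{\rho(x_0)}{R}\right)^{-\delta}\left(1+\frac{R}{\rho(x_0)}\right)^{-N'}
\end{equation*}
with $\delta = (\gamma-1/2)(2-d/q) > 0$ and $N'$ arbitrarily large, which is precisely condition~\ref{cond-c} for $K^{\gamma}$ with $s = p_{\gamma}$. Theorem~\ref{teo-deigualdades-operad-V} then delivers the strong type $(p,p)$ inequality for $V^{\gamma-1/2}\nabla L^{-\gamma}$ with weight $M_r^{\theta}w$ in the range $1 \leq p < p_\gamma$, and the adjoint inequality for $L^{-\gamma}\nabla V^{\gamma-1/2} = (V^{\gamma-1/2}\nabla L^{-\gamma})^{\star}$ (up to signs) with weight $M^{\theta}w$ in the dual range $p_\gamma' < p < \infty$.

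The step I expect to require most care is the bookkeeping for the two cases $q > d$ and $d/2 < q < d$: in the first case one uses the pointwise bound on $K_\nu$ and splits only the $V^{\gamma-1/2}$ part via reverse H\"older, while in the second the $L^{p_0}$ bound on $K_\nu$ must be combined via H\"older with the $L^{2q/(2\gamma-1)}$ bound on $V^{\gamma-1/2}$ so that the reciprocals add up to $1/p_\gamma$. Once the exponents are verified to agree with~\eqref{eq-def-s-Knu} in both cases, the rest of the argument is a routine application of Lemmas~\ref{lem-trucho} and~\ref{lem-V} and the machinery already built in Theorem~\ref{teo-deigualdades-operad-V}.
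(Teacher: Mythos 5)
Your proposal matches the paper's proof essentially step for step: factor $K^{\gamma}(x,y)=V^{\gamma-1/2}(x)K_{\nu}(x,y)$, verify condition~\ref{cond-c} with $s=p_\gamma$ by treating the two cases $q>d$ and $d/2<q<d$ (pointwise bound~\eqref{tamKnubueno} plus reverse H\"older in the first, H\"older's inequality pairing~\eqref{tamKnumalo} with the $L^{2q/(2\gamma-1)}$ bound on $V^{\gamma-1/2}$ in the second), use Lemma~\ref{lem-trucho} to pass from $\rho(y)$ to $\rho(x_0)$ and Lemma~\ref{lem-V} to produce the $(1+\rho(x_0)/R)^{-(\gamma-1/2)(2-d/q)}$ factor, and then invoke Theorem~\ref{teo-deigualdades-operad-V}. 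This is exactly the argument given in the paper.
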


\section{On local integrability of $Tf$ and $T^{\star}f$}\label{sec-chiB}
In this section we are going to apply the general results of Section~\ref{sec-general} to weights of the form $w=\chi_{B}$. Studying maximal operators like $M^\theta_\phi$ acting on such weights we are going to get sufficient conditions on $f$ to assume some local integrability of $Tf$. We do that in the next lemma. 

\begin{lem} \label{lem-acotacion-maximal-caracteristicaB}
	Let $\theta\geq 0$, $\phi$ a Young function and $Q=B(x_0,\rho(x_0))$ then there exist positive constants $c_1$, $c_2$, $\sig_1$ and $\sig_2$ such that
	
	\begin{equation}\label{max_phi_up_down}
	c_1\left(1+\frac{|x-x_0|}{\rho(x_0)}\right)^{-\sig_1}\leq M_{\phi}^{\theta} \chi_Q(x)
	\leq c_2 \left(1+\frac{|x-x_0|}{\rho(x_0)}\right)^{-\sig_2}
	\end{equation}
\end{lem}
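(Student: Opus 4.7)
The plan is to first rewrite $M_\phi^\theta \chi_Q(x)$ via the explicit formula
$\|\chi_Q\|_{\phi,B}=1/\phi^{-1}(|B|/|B\cap Q|)$,
which is valid whenever $B\cap Q\neq\emptyset$ and gives $0$ otherwise. The quantity to analyse is then the supremum, over admissible balls $B=B(x_1,r)\ni x$ meeting $Q$, of $(1+r/\rho(x_1))^{-\theta}/\phi^{-1}(|B|/|B\cap Q|)$. Two ingredients will do the work: a triangle-inequality constraint on the radius, and the change-of-basepoint estimates provided by \eqref{eq-constantesRho} and Lemma~\ref{lem-trucho}. A convexity-type bound on $\phi^{-1}$ is also needed: since $\phi(0)=0$ and $\phi$ is convex, the slope $\phi(t)/t$ is non-decreasing, giving $\phi(t)\geq\phi(1)t$ for $t\geq 1$, equivalently $\phi^{-1}(y)\leq y/\phi(1)$ for $y\geq\phi(1)$.

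For the \emph{lower} bound I plan to restrict the supremum to a single judiciously chosen ball. Writing $R=|x-x_0|$, the ball $B_\star=B(x,R+\rho(x_0))$ contains both $x$ and $Q$, so $|B_\star|/|B_\star\cap Q|\simeq (1+R/\rho(x_0))^d$ and the convexity bound gives $\|\chi_Q\|_{\phi,B_\star}\gtrsim (1+R/\rho(x_0))^{-d}$. On the other hand, \eqref{eq-constantesRho} yields $\rho(x)\geq c\rho(x_0)(1+R/\rho(x_0))^{-N_0}$, so $r_\star/\rho(x)\lesssim (1+R/\rho(x_0))^{N_0+1}$ and the outer factor contributes at least $(1+R/\rho(x_0))^{-\theta(N_0+1)}$. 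Multiplying the two estimates produces the lower bound with $\sigma_1=d+\theta(N_0+1)$.

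For the \emph{upper} bound I will fix an arbitrary $B=B(x_1,r)\ni x$ meeting $Q$ and exploit the triangle inequality: picking $z\in B\cap Q$ gives $R\leq 2r+\rho(x_0)$ and $|x_1-x_0|\leq r+\rho(x_0)$. The range $R\leq 2\rho(x_0)$ is handled by the trivial bound $\|\chi_Q\|_{\phi,B}\leq 1/\phi^{-1}(1)$ since there $1+R/\rho(x_0)$ is bounded. In the main range $R>2\rho(x_0)$ one has $r\geq R/4$ and $|x_1-x_0|\lesssim r$; \eqref{eq-constantesRho} then gives $\rho(x_1)\lesssim\rho(x_0)(r/\rho(x_0))^{N_0/(N_0+1)}$, hence $r/\rho(x_1)\gtrsim (R/\rho(x_0))^{1/(N_0+1)}$, and the outer factor supplies decay $(1+R/\rho(x_0))^{-\theta/(N_0+1)}$. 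Combined with $\|\chi_Q\|_{\phi,B}\leq 1/\phi^{-1}(1)$ this produces the upper bound with $\sigma_2=\theta/(N_0+1)$.

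The delicate point will be the endpoint $\theta=0$: the argument above extracts decay entirely from the $(1+r/\rho(x_1))^{-\theta}$ factor, which is $1$ in that case. One must then squeeze the decay out of the Luxembourg factor itself, using $|B|/|B\cap Q|\gtrsim (r/\rho(x_0))^d\gtrsim (R/\rho(x_0))^d$ together with whatever quantitative growth of $\phi^{-1}$ at infinity is available. For the Young functions actually relevant in the paper (of type $t\log^\alpha(1+t)$ or polynomial) this does give a positive $\sigma_2$ depending on $\phi$; the final constants are then the minimum of the two contributions.
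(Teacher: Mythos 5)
Your argument is correct (for $\theta>0$) and is structurally close to the paper's: both prove the lower bound by testing on a single well-chosen ball and the upper bound using the triangle inequality together with the $\rho$-comparison \eqref{eq-constantesRho}. Where you differ, you are actually more careful than the paper. In the lower bound the paper asserts that for $B_x = B(x,|x-x_0|+\rho(x_0))$ one has $\|\chi_Q\|_{\phi,B_x}=1$, which is false in general: since $Q\subset B_x$, one has $\|\chi_Q\|_{\phi,B_x}=1/\phi^{-1}(|B_x|/|Q|)$, which is strictly less than $1$ as soon as $|B_x|>|Q|$. You correctly supply the convexity estimate $\phi^{-1}(y)\leq y/\phi(1)$ to convert this into $\|\chi_Q\|_{\phi,B_x}\gtrsim(1+|x-x_0|/\rho(x_0))^{-d}$, obtaining $\sigma_1=d+\theta(N_0+1)$. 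The paper's claimed exponent $\sigma_1=\theta$ is too small: at $\theta=0$ it would say $M_\phi\chi_Q$ is bounded below by a positive constant, which contradicts the obvious decay. For the upper bound you skip the local/global decomposition the paper uses and instead classify balls directly by the size of $r$ versus $\rho(x_0)$; both routes end up extracting the decay entirely from the $(1+r/\rho)^{-\theta}$ factor via \eqref{eq-constantesRho} or Lemma~\ref{lem-trucho}, with exponents $\theta/(N_0+1)$ (you) and $\theta/N_0$ (paper). The constants differ but both are of the right form.

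You are also right to flag $\theta=0$ as a genuine issue rather than a ``delicate point'' to finesse. For $\theta=0$ and a general Young function the decay of $\|\chi_Q\|_{\phi,B}$ with $|B|/|B\cap Q|$ is governed by the growth of $\phi^{-1}$, and there are admissible $\phi$ (e.g.\ $\phi(t)=e^t-1$, for which $\phi^{-1}(y)\sim\log y$) for which $M_\phi\chi_Q(x)$ only decays logarithmically, so no positive $\sigma_2$ exists and the statement as written is false in that generality. The paper's proof itself produces only $\sigma_2=\theta/N_0$ and therefore also gives nothing at $\theta=0$; in the applications the lemma is invoked with some $\theta>0$, so the degenerate case is never used, but the hypothesis ``$\theta\geq 0$'' should really be ``$\theta>0$'' (or one should assume polynomial lower growth of $\phi^{-1}$). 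Your recognition of this is not a gap in your proof; it is an overstatement in the lemma.
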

\begin{proof}
	Let $Q=B(x_0,\rho(x_0))$ be a critical ball, $\theta\geq 0$ and $\phi$ a Young function. We may suppose without loss of generality that $\phi(1)=1$. Recalling that 
$$M_{\phi}^{\theta} \chi_Q(x)=\sup_{B(x_B,r_B)\ni x}
	\left(1+\frac{r_B}{\rho(x_B)}\right)^{-\theta} \|\chi_Q\|_{\phi,B},$$
	it is enough to consider $B$ such that $Q\cap B\neq \emptyset$, otherwise $\|\chi_Q\|_{\phi,B}=0$, since
	\begin{equation*}
	\begin{split}
	\|\chi_Q\|_{\phi,B} & = \inf\left\{\lambda: \frac{1}{|B|}\int_B \phi\left(\frac{\chi_Q}{\lambda}\right)\leq 1\right\}
	\\ & =  \inf\left\{\lambda: \frac{1}{|B|}\int_{B\cap Q} \phi\left(\frac{1}{\lambda}\right)\leq 1\right\}.
	\end{split}
	\end{equation*}
	
	Let us consider first a ball $B=B(x_B,r_B)$ with $r_B\leq \rho(x_B)$, and $x\in B$. Then for $y\in B\cap Q$,
	$$|x-x_0|\leq |x-y|+|y-x_0|\leq 2r_B + \rho(x_0)\leq 2\rho(x_B)+\rho(x_0).$$
	Also, since $B$ is sub-critical, $Q$ is critical and  $B\cap Q\neq \emptyset$ we have that $\rho(x_B)\simeq \rho(y)\simeq \rho(x_0)$. Then,
	\begin{equation}
	|x-x_0|\leq  \tilde{C}\rho(x_0),
	\end{equation} 
	for some $\tilde{C}>0$. Then if $x\notin \tilde{Q}=B(x_0,\tilde{C}\rho(x_0))$ we have
	$$\Mloc_{\phi}(\chi_Q)(x)=\sup_{\substack{B\ni x\\r_B\leq\rho(x_B)}} \|\chi_Q\|_{\phi,B}=0.$$
	
	If $x\in \tilde{Q}$ and $B\cap Q\neq \emptyset$,
	\begin{equation}
	\begin{split}
	\|\chi_Q\|_{\phi,B} & = \inf\left\{\lambda: \frac{|B\cap Q|}{|B|} \phi\left(\frac{1}{\lambda}\right)\leq 1\right\}
	\\ & \leq \inf\left\{\lambda: \phi\left(\frac{1}{\lambda}\right)\leq 1\right\}
	\\ & = 1/\phi^{-1}(1)=1.
	\end{split}
	\end{equation}
	So, taking the supreme over all balls we have that if $x\in \tilde{Q}$,
	\begin{equation}
	\Mloc_{\phi}(\chi_Q)(x)\leq \left(1+\frac{r_B}{\rho(x_B)}\right)^{-\sig},
	\end{equation} 
	for any $\sig>0$.
	
	Next, we consider the operator 
	\begin{equation}
	\begin{split}
	M^{\theta, \text{glob}}_{\phi}(\chi_Q)(x)&=\sup_{\substack{B\ni x\\r_B\geq \rho(x_B)}} \left(1+\frac{r_B}{\rho(x_B)}\right)^{-\theta} \|\chi_Q\|_{\phi,B}
	\end{split}
	\end{equation}
	
	As above, it is enough to consider balls $B$ such that $Q\cap B\neq \emptyset$.
	Let $y\in Q \cap B$, then $\rho(y)\simeq\rho(x_0)$.  Using Lemma~\ref{lem-trucho}
	$$\left(1+ \frac{r_B}{\rho(x_B)}\right)^{-\theta}
	\leq C\left(1+ \frac{r_B}{\rho(y)}\right)^{-\theta/N_0}
	\leq C\left(1+ \frac{r_B}{\rho(x_0)}\right)^{-\theta/N_0} $$
	
	Let $x\in B$. Suppose first that $x\notin 2Q$, then 
	$$|x-x_0|\leq |x-y|+|y-x_0|\leq 2r_B + \rho(x_0) \leq 2r_B + |x-x_0|/2$$
and hence $|x-x_0|\leq 4r_B$. Therefore,
	$$\left(1+ \frac{r_B}{\rho(x_B)}\right)^{-\theta}
	\leq C\left(1+ \frac{|x-x_0|}{\rho(x_0)}\right)^{-\theta/N_0}.$$
	
	As before, we have $\|\chi_Q\|_{\phi, B}\leq 1$. 
	Then, if $x\notin 2{Q}$
	$$M^{\theta, \text{glob}}_{\phi}(\chi_Q)(x)\leq C \left(1+\frac{|x-x_0|}{\rho(x_0)}\right)^{-\sigma},$$
	where $\sigma=\theta/N_0$.
	
	On the other hand, if $x\in 2{Q}$,
	$$M^{\theta, \text{glob}}_{\phi}(\chi_Q)(x)\leq M_{\phi} (\chi_Q)(x)\leq 1.$$
	Then, since $|x-x_0|/\rho(x_0)\leq 2$
	$$M^{\theta, \text{glob}}_{\phi}(\chi_Q)(x)\leq C \left(1+\frac{|x-x_0|}{\rho(x_0)}\right)^{-\sigma}.$$
	
	Using that $M_{\phi}^{\theta} \leq \Mloc_{\phi} + M^{\theta, \text{glob}}$ and collecting last estimates we obtain the right hand side of \eqref{max_phi_up_down}. For the boundedness by below, given $x$ we consider $B_x=B(x,|x-x_0|+\rho(x_0))$. Then $x\in B_x$ and $\|\chi_Q\|_{\phi,B_x}=1$. Therefore,
	
	\begin{equation*}
	\begin{split}
	M_{\phi}^{\theta}(x) & \ge \left(1+\frac{|x-x_0|+\rho(x_0)}{\rho(x_0)}\right)^{-\theta}\|\chi_Q\|_{\phi,B_x}\\
	& \ge 2^\theta\left(1+\frac{|x-x_0|}{\rho(x_0)}\right)^{-\theta}.
	\end{split}
    \end{equation*}

\end{proof}

\begin{rem}
	We observe that in particular Lemma~\ref{lem-acotacion-maximal-caracteristicaB} holds for all maximal operators appearing in Theorem~\ref{teo-prin}, Theorem~\ref{teo-tipo debil}. Hence they satisfy inequality~\eqref{max_phi_up_down} for some constants $c_1$, $c_2$, $\sig_1$ and $\sig_2$ when applied to the function $\chi_B$.
\end{rem}

\begin{prop} \label{prop-condicion-integrabilidadlocal}
	Let $p\geq1$ and $\phi$ a Young function. There exists $\theta\geq 0$ such that for any ball $Q=B(x_0,\rho(x_0))$ 
	\begin{equation}\label{eq-CondIntLoc-1}
	\int |f|^p {M}_{\phi}^{\theta}(\chi_Q)<\infty
	\end{equation}
	if and only if there exists $\sigma>0$ such that
	\begin{equation}\label{eq-cond-f-integrloc-Tf}
	\int \frac{|f|^p}{(1+|x|)^{\sigma}}<\infty.
	\end{equation}
	
\end{prop}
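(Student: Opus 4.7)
The whole proof reduces to the two-sided pointwise bound for $M_\phi^\theta\chi_Q$ proved in Lemma~\ref{lem-acotacion-maximal-caracteristicaB}, plus the elementary observation that, for a fixed critical ball $Q=B(x_0,\rho(x_0))$, the weights $(1+|x-x_0|/\rho(x_0))^{-\alpha}$ and $(1+|x|)^{-\alpha}$ are comparable up to constants depending on $x_0$ and $\rho(x_0)$ (harmless, since the statement is qualitative). Inspection of the proof of Lemma~\ref{lem-acotacion-maximal-caracteristicaB} moreover shows that the upper-bound exponent $\sigma_2$ can be taken essentially equal to $\theta/N_0$, so it becomes as large as we wish by enlarging $\theta$, whereas the lower-bound exponent $\sigma_1$ depends only on $\theta$.

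For the necessity I would pick a single fixed ball, say $Q_0=B(0,\rho(0))$. The hypothesis gives $\int|f|^p M_\phi^\theta\chi_{Q_0}<\infty$, and the lower bound in Lemma~\ref{lem-acotacion-maximal-caracteristicaB} yields
\begin{equation*}
c_1\int \frac{|f(x)|^p}{(1+|x|/\rho(0))^{\sigma_1}}\,dx \;\le\; \int|f|^p M_\phi^\theta\chi_{Q_0}<\infty.
\end{equation*}
Since $\rho(0)$ is a positive constant, $1+|x|/\rho(0)$ is comparable to $1+|x|$, and therefore \eqref{eq-cond-f-integrloc-Tf} holds with $\sigma=\sigma_1$.

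For the sufficiency I would choose once and for all a $\theta\ge 0$ large enough so that the upper-bound exponent satisfies $\sigma_2\ge\sigma$, using the explicit dependence $\sigma_2=\theta/N_0$ observed above. Then, for an arbitrary critical ball $Q=B(x_0,\rho(x_0))$, the upper bound in Lemma~\ref{lem-acotacion-maximal-caracteristicaB} gives
\begin{equation*}
\int|f|^p M_\phi^\theta\chi_Q \;\le\; c_2\int|f|^p\bigl(1+|x-x_0|/\rho(x_0)\bigr)^{-\sigma_2}dx.
\end{equation*}
I would split the integral into the compact region $\{|x|\le 2|x_0|\}$ and its complement. On the compact part, $M_\phi^\theta\chi_Q\le c_2$ and $(1+|x|)^\sigma\le (1+2|x_0|)^\sigma$, so the piece is bounded by $c_2(1+2|x_0|)^\sigma\int|f|^p(1+|x|)^{-\sigma}<\infty$. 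On the complementary region $|x|>2|x_0|$ one has $|x-x_0|\ge|x|/2$, whence $1+|x-x_0|/\rho(x_0)\ge C(x_0,\rho(x_0))(1+|x|)$, and using $\sigma_2\ge\sigma$ the corresponding piece is controlled by a constant multiple of $\int|f|^p(1+|x|)^{-\sigma}<\infty$. The only mild technical point is keeping the $Q$-dependent constants outside of any uniformity statement, but this is automatic since the proposition only asks for finiteness for each $Q$; no quantitative bound uniform in $Q$ is claimed.
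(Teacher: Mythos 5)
Your argument is correct and follows essentially the same route as the paper: invoke the two-sided pointwise bounds of Lemma~\ref{lem-acotacion-maximal-caracteristicaB}, then use the elementary comparison between $\bigl(1+|x-x_0|/\rho(x_0)\bigr)^{-1}$ and $(1+|x|)^{-1}$ (with constants depending on $Q$, which is harmless since the finiteness claim is per-$Q$). The only cosmetic difference is that you carry out this comparison by splitting the integral over $\{|x|\le 2|x_0|\}$ and its complement, whereas the paper simply records the comparability globally as equation~\eqref{eq-eqen0} and appeals to the lemma; both readings correctly treat $\theta$ as chosen after the exponent $\sigma$ in~\eqref{eq-cond-f-integrloc-Tf} is known, which is how the proposition is subsequently used in Theorem~\ref{teo-ChiB}.
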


\begin{proof}
	Let $p\geq 1$ and $\phi$ a Young function. Let $Q=B(x_0,\rho(x_0))$ a critical ball. It is straightforward that there are constants $c$ and $\tilde{c}$ depending on $x_0$ and $\rho$ such that
	\begin{equation}\label{eq-eqen0}
	\frac{c}{1+\frac{|x-x_0|}{\rho(x_0)}}\leq \frac{1}{1+|x|}\leq \frac{\tilde{c}}{1+\frac{|x-x_0|}{\rho(x_0)}}.
	\end{equation}
Then, the equivalence between conditions~\eqref{eq-CondIntLoc-1} and~\eqref{eq-cond-f-integrloc-Tf} follows from equation~\eqref{eq-eqen0} above and Lemma~\ref{lem-acotacion-maximal-caracteristicaB}.
	
\end{proof}

\begin{thm}\label{teo-ChiB}
	Let $1\leq p<\infty$ and $T$ an operator such that for some Young function $\phi$ and for all $\theta$ there exists a constant $C$ such that
	\begin{equation}\label{eq-teo-ChiB}
	\int |Tf|^p w \leq C\int |f|^p M_{\phi}^{\theta}w,
	\end{equation}
	for any weight $w$. Then, if  a function $f$ satisfy~\eqref{eq-cond-f-integrloc-Tf}, $Tf\in L^{p}_{loc}$. In particular $Tf$ is finite almost everywhere.
\end{thm}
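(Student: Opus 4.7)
The plan is to reduce the problem on an arbitrary ball to the equivalence already provided by Proposition~\ref{prop-condicion-integrabilidadlocal}. Given $f$ satisfying~\eqref{eq-cond-f-integrloc-Tf}, I would first apply Proposition~\ref{prop-condicion-integrabilidadlocal} to obtain a value $\theta_0\geq 0$ (depending only on $p$ and $\phi$) for which
$$\int |f|^p M_\phi^{\theta_0}(\chi_Q)<\infty \quad \text{for every critical ball } Q=B(x_0,\rho(x_0)).$$
Then I would invoke the hypothesis~\eqref{eq-teo-ChiB} with this particular $\theta_0$ and with the weight $w=\chi_B$, where $B$ is an arbitrary fixed ball, to obtain
$$\int_B |Tf|^p \leq C\int |f|^p M_\phi^{\theta_0}(\chi_B).$$

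Next I would reduce $\chi_B$ to a finite sum of characteristic functions of critical balls. Since $B$ is bounded and the family $\{Q_n\}_{n\in\NN}$ from Proposition~\ref{prop-cubrimientocritico} covers $\RR^d$ with bounded overlap, only finitely many indices $n_1,\dots,n_N$ satisfy $Q_{n_j}\cap B\neq\emptyset$, and for these we have $B\subset\bigcup_{j=1}^N Q_{n_j}$. Hence $\chi_B\leq \sum_{j=1}^N \chi_{Q_{n_j}}$, and because the Luxemburg functional $\|\cdot\|_{\phi,B}$ is a norm, the maximal operator $M_\phi^{\theta_0}$ is subadditive, so
$$M_\phi^{\theta_0}(\chi_B)\leq \sum_{j=1}^N M_\phi^{\theta_0}(\chi_{Q_{n_j}}).$$

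Integrating against $|f|^p$ and applying the finiteness provided by Proposition~\ref{prop-condicion-integrabilidadlocal} to each of the $N$ terms yields $\int_B |Tf|^p<\infty$. Since $B$ was arbitrary, $Tf\in L^p_{\loc}$, and local $p$-integrability forces $Tf$ to be finite almost everywhere. No step is really an obstacle; the only point that requires a brief justification is the existence of a finite sub-cover of $B$ by critical balls, which is immediate from the boundedness of $B$ together with the locally finite overlap in part $ii)$ of Proposition~\ref{prop-cubrimientocritico}.
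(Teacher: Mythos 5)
Your proof is correct and follows essentially the same route as the paper: invoke Proposition~\ref{prop-condicion-integrabilidadlocal} to fix $\theta$, cover a given ball $B$ by finitely many critical balls from Proposition~\ref{prop-cubrimientocritico}, and reduce to finiteness on each. The only (harmless) difference is the order of operations: the paper first splits $\int_B|Tf|^p\leq\sum_i\int|Tf|^p\chi_{B_i}$ and then applies~\eqref{eq-teo-ChiB} with $w=\chi_{B_i}$ for each $i$, whereas you apply~\eqref{eq-teo-ChiB} once with $w=\chi_B$ and then use monotonicity and subadditivity of $M_\phi^\theta$ (which follow from the Luxemburg norm's triangle inequality) to bound $M_\phi^\theta\chi_B$ by $\sum_j M_\phi^\theta\chi_{Q_{n_j}}$; both yield the same conclusion with the same ingredients.
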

\begin{proof}
	Let $1\leq p < \infty$ and $T$ as stated. Let $f$ be a function satisfying~\eqref{eq-cond-f-integrloc-Tf} for some $\sigma>0$.  Then, applying Proposition~\ref{prop-condicion-integrabilidadlocal}, there exists some $\theta\geq 0$ such that~\eqref{eq-CondIntLoc-1} holds for any  critical ball $Q$. 
	
	Let $B$ be a ball in $\RR^d$. According to Proposition~\ref{prop-cubrimientocritico} we can cover $B$ by a finite number of critical balls $B_1,\dots  B_N$. Using the hypothesis on the operator for such $\theta$, 
	\begin{equation*}
	\begin{split}
	\int_B |Tf|^p & \leq \sum_{i=1}^{N}  \int |Tf|^p \chi_{B_i}
	\\ & \leq  C \sum_{i=1}^{N}  \int |f|^p M_{\phi}^{\theta}\chi_{B_i}<\infty.
	\end{split}
	\end{equation*}
	
\end{proof}

For operators that satisfy a weak type inequality for $p=1$ we obtain an analogous result following the same lines as in the proof of Theorem~\ref{teo-ChiB}.

\begin{thm}\label{teo-ChiBdebil}
	Let  $T$ be an operator such that for some Young function $\phi$ and for all $\theta$ there exists a constant $C$ such that
	\begin{equation*}
	w(\{|Tf|>\lambda\}) \leq C \int |f| M_{\phi}^{\theta}w,\text{   for all   } \lambda>0,
	\end{equation*}
	for all weight $w$. Then, if  a function $f$ satisfy~\eqref{eq-cond-f-integrloc-Tf} with $p=1$, $Tf\in L^{1, \infty}_{loc}$. In particular $Tf$ is finite almost everywhere.
\end{thm}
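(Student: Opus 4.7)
The plan is to mimic the argument of Theorem~\ref{teo-ChiB} with essentially only cosmetic changes forced by replacing strong type $L^p$ control with the weak type $L^{1,\infty}$ norm. Given $f$ satisfying~\eqref{eq-cond-f-integrloc-Tf} with $p=1$ for some $\sigma>0$, Proposition~\ref{prop-condicion-integrabilidadlocal} (applied with $p=1$ and the same Young function $\phi$) furnishes a value $\theta\ge 0$ such that
\begin{equation*}
\int |f|\, M_\phi^\theta(\chi_Q)<\infty
\end{equation*}
for every critical ball $Q=B(x_0,\rho(x_0))$. We fix this $\theta$ throughout.

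Next, I would localize. Given an arbitrary ball $B\subset\RR^d$, Proposition~\ref{prop-cubrimientocritico} allows us to cover $B$ by finitely many critical balls $B_1,\dots,B_N$ (only finitely many since $B$ is bounded and the family of critical balls has bounded overlap). To estimate $\lambda\,|\{x\in B:|Tf(x)|>\lambda\}|$ we use subadditivity of Lebesgue measure together with the weak-type hypothesis applied to the weight $w=\chi_{B_i}$ for each $i$:
\begin{equation*}
|\{x\in B:|Tf(x)|>\lambda\}|
\;\le\;\sum_{i=1}^N \chi_{B_i}\!\left(\{|Tf|>\lambda\}\right)
\;\le\;\frac{C}{\lambda}\sum_{i=1}^N \int|f|\, M_\phi^\theta(\chi_{B_i}).
\end{equation*}
By the choice of $\theta$, each term in the finite sum on the right is finite, so the quantity $\lambda\,|\{x\in B:|Tf|>\lambda\}|$ is bounded by a constant independent of $\lambda>0$. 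Hence $Tf\in L^{1,\infty}_{\mathrm{loc}}$, and in particular $Tf$ is finite almost everywhere (since a function in weak $L^1$ on a ball cannot be infinite on a set of positive measure).

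There is really no substantial obstacle: the mechanism of the argument is identical to Theorem~\ref{teo-ChiB}, and all the analytic work (covering by critical balls, the pointwise estimate $M_\phi^\theta\chi_Q \lesssim (1+|x-x_0|/\rho(x_0))^{-\sigma_2}$, and the equivalence in Proposition~\ref{prop-condicion-integrabilidadlocal}) has already been carried out. The only point worth a brief remark is that one must choose the single value of $\theta$ \emph{before} decomposing $B$, so that the same hypothesis~\eqref{eq-teo-ChiB}--style weak inequality (with $M_\phi^\theta$) applies uniformly to all the finitely many weights $\chi_{B_i}$; this is guaranteed because the hypothesis of the theorem holds for every $\theta\ge 0$.
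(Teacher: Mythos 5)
Your proof is correct and follows exactly the route the paper intends: the paper simply remarks that Theorem~\ref{teo-ChiBdebil} ``follows the same lines as in the proof of Theorem~\ref{teo-ChiB},'' and you have carried out precisely that adaptation, replacing the strong-type bound by Chebyshev/weak-type with $w=\chi_{B_i}$ and using Proposition~\ref{prop-condicion-integrabilidadlocal} to fix $\theta$ in advance. No gaps.
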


The above results can be applied to all operators considered in Section~\ref{sec-aplSchrodinger} since, as it was shown there, theorems of Section~\ref{sec-general} hold in those cases. In particular we point out that for $\mathcal{R}_1$ and $\mathcal{R}_1^{\star}$ we can apply Theorem~\ref{teo-ChiB}, for $1<p<\infty$, and Theorem~\ref{teo-ChiBdebil}, if $V\in RH_q$ with $q>d$. As for the case $d/2<q<d$, the conclusion  holds for $1<p<p_0$ and $p>p_0'$ respectively. On the other hand, Theorem~\ref{teo-ChiB} and Theorem~\ref{teo-ChiBdebil} can be applied to $\mathcal{R}_2$ for $1<p<q$, when $q>d/2$.

Similarly $VL^{-1}$, $V^{1/2}L^{-1}$ and $V^{1/2}L^{-1/2}$ fall under the scope of Theorem~\ref{teo-ChiB} for $1\le p < q$, $1\leq p \leq p_1$ and $1\le p < 2q$, respectively (see Theorem~\ref{teo:Vcongamma} and Theorem~\ref{teo:mixtos}).

In~\cite{shen}, Shen obtained $L^p$-estimates for derivatives of solutions of differential equations related to Schr\"odinger operator as a consequence of $L^p$-continuity of Riesz-Schr\"odinger Transforms (see Corollary~$0.9$ and Corollary~$0.10$). Here, with our results, we can give qualitative information on their local integrability.

\begin{cor}
	Suppose $V\in RH_q$ for some $q>d/2$. Assume that $-\Delta u + Vu =f$ in $\RR^d$, with $f$ satisfying~\eqref{eq-cond-f-integrloc-Tf} for some $\sigma>0$ and some $p\geq 1$. Then,
	\begin{enumerate}
		 \item if $1<p<q$, $\nabla^2 u\in L^p_{loc}$,
		 \item if $1\leq p<q$, $V u\in L^p_{loc}$ ,
		 \item if $1\leq p< p_1$, $V^{1/2}\nabla u\in L^p_{loc}$,
	\end{enumerate}
	with $p_1$ such that $1/p_1 = \left(1/q-1/d\right)^+ + 1/2q$. 
\end{cor}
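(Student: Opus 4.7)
The plan is to reduce each conclusion to an application of Theorem~\ref{teo-ChiB} for an appropriate operator. Since $-\Delta u + Vu = f$ means $Lu = f$, one has (formally, and rigorously under the standing assumptions on $V$) $u = L^{-1}f$, so that
\begin{equation*}
\nabla^2 u = \mathcal{R}_2 f, \qquad V u = (V L^{-1}) f, \qquad V^{1/2}\nabla u = (V^{1/2}\nabla L^{-1}) f.
\end{equation*}
These three operators are precisely those studied in Section~\ref{sec-aplSchrodinger}, corresponding to $\mathcal{R}_2$, to $V^{\gamma}L^{-\gamma}$ with $\gamma=1$, and to $V^{\gamma-1/2}\nabla L^{-\gamma}$ with $\gamma=1$, respectively.

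The idea is then to feed each of these operators into the machinery of Section~\ref{sec-chiB}. For (1), Theorem~\ref{teo-R1} provides, for $1<p<q$, a Fefferman--Stein inequality for $\mathcal{R}_2$ with right-hand side controlled by $M_r^{\theta}$ (with $r=(q/p)'$); this is of the form required in~\eqref{eq-teo-ChiB}. For (2), Theorem~\ref{teo:Vcongamma} applied with $\gamma=1$ gives the analogous inequality for $V L^{-1}$ in the range $1\leq p < q/\gamma = q$, with maximal operator $M_r^{\theta}$, $r=(q/p)'$. For (3), Theorem~\ref{teo:mixtos} applied with $\gamma=1$ yields the inequality for $V^{1/2}\nabla L^{-1}$ in the range $1\leq p < p_\gamma$, where
\begin{equation*}
\frac{1}{p_\gamma} = \left(\frac{1}{q}-\frac{1}{d}\right)^{+} + \frac{2\gamma-1}{2q} = \left(\frac{1}{q}-\frac{1}{d}\right)^{+} + \frac{1}{2q},
\end{equation*}
which is exactly the threshold $p_1$ in the statement. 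In every case the right-hand side is $\int |f|^p M_\phi^\theta w$ for a Young function $\phi$ belonging to the framework of Theorem~\ref{teo-ChiB}.

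To finish, observe that the hypothesis on $f$ is exactly condition~\eqref{eq-cond-f-integrloc-Tf} of Proposition~\ref{prop-condicion-integrabilidadlocal}. Applying Theorem~\ref{teo-ChiB} with $T=\mathcal{R}_2$, $T=VL^{-1}$, or $T=V^{1/2}\nabla L^{-1}$, we conclude that $\mathcal{R}_2 f$, $VL^{-1}f$ and $V^{1/2}\nabla L^{-1}f$ belong to $L^p_{\loc}(\RR^d)$ in the indicated ranges, which gives items (1), (2) and (3). The only point that requires some care (rather than an obstacle) is the identification $u = L^{-1}f$: this is where one must verify that the weak solution of $-\Delta u + Vu = f$ coincides with the operator $L^{-1}$ arising from the Schrödinger semigroup, so that the formal kernel computations of Section~\ref{sec-aplSchrodinger} actually describe the derivatives of $u$; once this is granted, the proof is a direct concatenation of the theorems just cited.
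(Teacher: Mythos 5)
Your proposal is correct and follows essentially the same route as the paper, which simply cites Theorem~\ref{teo-ChiB} applied to $\nabla^2 L^{-1}$, $VL^{-1}$ and $V^{1/2}\nabla L^{-1}$. You spell out the intermediate steps (identifying each operator with $\mathcal{R}_2$, $V^{\gamma}L^{-\gamma}$ at $\gamma=1$, $V^{\gamma-1/2}\nabla L^{-\gamma}$ at $\gamma=1$, and invoking Theorems~\ref{teo-R1}, \ref{teo:Vcongamma}, \ref{teo:mixtos} to get the hypothesis~\eqref{eq-teo-ChiB}), but the argument is the same.
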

\begin{proof}
	We just apply Theorem~\ref{teo-ChiB} to the operators $\nabla^2 L^{-1}$, $V L^{-1}$ and $V^{1/2}\nabla L^{-1}$.
	
\end{proof}

\begin{cor}
	Suppose $V\in RH_q$ for some $q>d/2$ and let $p_0'<p<p_0$, with $p_0$ such that $1/p_0 = \left(1/q-1/d\right)^+$. Assume that $-\Delta u + Vu =\nabla \cdot F$ in $\RR^d$, for a field $F$ with $|F|$ satisfying~\eqref{eq-cond-f-integrloc-Tf} for some $\sigma>0$.		\begin{enumerate}
			\item\label{item-aux1} If $p_0'<p<p_0$, then
			\item\label{item-aux2} If $p_0'<p<2q$, then $V^{1/2} u\in L^p_{loc}$.
		\end{enumerate}
\end{cor}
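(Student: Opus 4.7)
The plan is to apply Theorem~\ref{teo-ChiB} to the two operators that send $F$ to $\nabla u$ and to $V^{1/2}u$, respectively, once we write $u=L^{-1}(\nabla\cdot F)$. Since $L^{-1/2}\nabla\cdot=-\mathcal{R}_1^{\star}$, these operators factor as
$$\nabla u \;=\; -\mathcal{R}_1\,\mathcal{R}_1^{\star} F,\qquad V^{1/2}u \;=\; -\bigl(V^{1/2}L^{-1/2}\bigr)\,\mathcal{R}_1^{\star} F,$$
and neither factor is one of the operators studied directly in Section~\ref{sec-aplSchrodinger}. The first step would therefore be to upgrade the Fefferman--Stein inequalities already available for the factors to a Fefferman--Stein inequality for each composition.

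For the first operator I would apply inequality~\eqref{eq-R1aux1} for $\mathcal{R}_1$ (valid in $1<p<p_0$) to land on $|\mathcal{R}_1^{\star} F|^p$ integrated against the weight $M_r^{\theta}w$, and then apply~\eqref{eq-R1aux2} for $\mathcal{R}_1^{\star}$ (valid in $p_0'<p<\infty$) with $M_r^\theta w$ in place of $w$. Taking the intersection of the admissible ranges gives exactly $p_0'<p<p_0$, together with a composite inequality of the form
$$\int|\nabla u|^p w\ \le\ C\int|F|^p\,(\Mloc_A+M^{\theta_1})\!\left(M_r^{\theta_2}w\right),$$
for any $A\in\mathcal{D}_p$ and suitable $\theta_1,\theta_2$. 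For the second operator I would replace the first step with the inequality for $V^{1/2}L^{-1/2}$ provided by Theorem~\ref{teo:Vcongamma} with $\gamma=1/2$ (valid in $1\le p<2q$), obtaining an analogous composite inequality valid in $p_0'<p<2q$.

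With these composite inequalities in hand, the proof of Theorem~\ref{teo-ChiB} goes through verbatim: cover a given ball $B\subset\RR^d$ by finitely many critical balls $B_j$, take $w=\chi_{B_j}$, and reduce to checking $\int|F|^p\,\mathcal{N}\chi_{B_j}<\infty$, where $\mathcal{N}$ is the iterated maximal operator on the right-hand side. By Lemma~\ref{lem-acotacion-maximal-caracteristicaB} the inner factor $M_r^{\theta_2}\chi_{B_j}$ is pointwise dominated by a function of the form $c(1+|x-x_{B_j}|/\rho(x_{B_j}))^{-\sigma}$. Applying the outer maximal to such a polynomially decaying function again produces polynomial decay, with an exponent we can prescribe by adjusting $\theta_1$. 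Combined with the growth condition~\eqref{eq-cond-f-integrloc-Tf} on $|F|$ (via Proposition~\ref{prop-condicion-integrabilidadlocal}), this makes the integral finite and yields $\nabla u, V^{1/2}u\in L^p_{\loc}$.

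The main technical obstacle is exactly this last point: verifying that the iterated maximal operator applied to $\chi_{B_j}$ still has polynomial decay of a rate that can be matched to the growth hypothesis on $|F|$. This is essentially an extension of Lemma~\ref{lem-acotacion-maximal-caracteristicaB} to products of two maximal operators; the cleanest route is to dominate $M_r^{\theta_2}\chi_{B_j}$ explicitly by a polynomially decaying function and then estimate the outer maximal of that function by the same splitting into sub-critical and global parts as in the proof of that lemma, which gives, after enlarging $\theta_1,\theta_2$ if necessary, an arbitrarily fast polynomial decay.
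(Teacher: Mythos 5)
Your factorization $\nabla u=-\mathcal{R}_1\mathcal{R}_1^{\star}F$ (and similarly with $V^{1/2}L^{-1/2}$ for the second item) and the idea of composing the two Fefferman--Stein inequalities for the factors is exactly the paper's opening move. The divergence occurs when you arrive at a right-hand side of the form $\int|F|^p\,(\Mloc_A+M^{\theta_1})(M_r^{\theta_2}w)$: you propose to leave this iterated maximal operator as is, take $w=\chi_{B_j}$, and verify directly that $(\Mloc_A+M^{\theta_1})(M_r^{\theta_2}\chi_{B_j})$ has polynomial decay by extending Lemma~\ref{lem-acotacion-maximal-caracteristicaB} to compositions. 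That is the hardest and least standard step, and you correctly flag it as the ``main technical obstacle'' --- but you do not actually carry it out; you only describe how you expect it to go.

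The paper avoids this entirely with a cleaner reduction. First note that since $\Mloc_A v\leq C\,\Mloc_\nu v\leq C M_\nu^\theta v$ (on sub-critical balls the factor $(1+r/\rho)^{-\theta}$ is comparable to $1$) and $M^\theta v\leq M_\nu^\theta v$, the bound \eqref{eq-R1aux2} can be weakened to $\int|\mathcal{R}_1^\star g|^p v\lesssim\int|g|^p M_\nu^\theta v$ for any $\nu>1$. Applying this after \eqref{eq-R1aux1} gives $\int|T_jf|^pw\lesssim\int|f|^p M_\nu^\theta\!\left(M_r^\theta w\right)$. The key observation is then that once $\nu>r$ one has $M_\nu^\theta\!\left(M_r^\theta w\right)\lesssim M_\nu^\theta w$, so the composition collapses to a single maximal operator of exactly the type appearing in \eqref{eq-teo-ChiB}. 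At that point Theorem~\ref{teo-ChiB} applies to $T_j$ with no modification, and there is no need to analyze iterated maximals applied to $\chi_{B_j}$ at all. So: your route is essentially parallel and probably salvageable, but it leaves the genuine work to an unproved extension of Lemma~\ref{lem-acotacion-maximal-caracteristicaB}, whereas the paper's one-line collapsing inequality $M_\nu^\theta M_r^\theta\lesssim M_\nu^\theta$ (for $\nu>r$) disposes of the issue and lets Theorem~\ref{teo-ChiB} be cited as a black box.
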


\begin{proof}
	We will show only item~\eqref{item-aux1}. The proof of~\eqref{item-aux2} is similar. Let $u=L^{-1}\nabla\dot F$. Then $\nabla u = \mathcal{R}_1(\mathcal{R}_1^\star\cdot F)$. Then in order to get that $\nabla u\in L^p_{\text{loc}}$ (due to Theorem~\ref{teo-ChiB}) it will be enough to check that the operators $T_j = \mathcal{R}_1 \circ (\mathcal{R}_1^\star)_j$ satisfy inequality \eqref{eq-teo-ChiB}. In fact, if $p_0'<p<p_0$, then
	\begin{equation}
	\begin{split}
	\int |T_jf|^pw
	& \lesssim \int |(\mathcal{R}_1^{\star})_jf|^{p} M_r^{\theta}w
	\\ & \lesssim \int |f|^{p} M_\nu^{\theta}M_r^{\theta}w.
	\end{split}
	\end{equation}
	for any $\nu>1$. Choosing $\nu>r$, it follows easily  $M_\nu^\theta( M_r^{\theta}w) \leq M_\nu^{\theta}w$, and then \eqref{eq-teo-ChiB} holds.
	
\end{proof}

\bibliographystyle{plain}

\bibliography{analysis}

\def\ocirc#1{\ifmmode\setbox0=\hbox{$#1$}\dimen0=\ht0 \advance\dimen0
  by1pt\rlap{\hbox to\wd0{\hss\raise\dimen0
  \hbox{\hskip.2em$\scriptscriptstyle\circ$}\hss}}#1\else {\accent"17 #1}\fi}
  \def\cprime{$'$} \def\cprime{$'$}
\begin{thebibliography}{10}

\bibitem{BHQ2018}
B.~Bongioanni, E.~Harboure, and P.~Quijano.
\newblock Weighted inequalities for schr{\"o}dinger type singular integrals.
\newblock {\em Journal of Fourier Analysis and Applications}, Jul 2018.

\bibitem{BHS-RieszJMAA}
B.~Bongioanni, E.~Harboure, and O.~Salinas.
\newblock Riesz transforms related to {S}chr\"odinger operators acting on
  {$BMO$} type spaces.
\newblock {\em J. Math. Anal. Appl.}, 357(1):115--131, 2009.

\bibitem{BHScommut-w_JFAA}
B.~Bongioanni, E.~Harboure, and O.~Salinas.
\newblock Weighted inequalities for commutators of {S}chr\"odinger-{R}iesz
  transforms.
\newblock {\em J. Math. Anal. Appl.}, 392(1):6--22, 2012.

\bibitem{Cordoba-Fefferman-MR0420115}
A.~Cordoba and C.~Fefferman.
\newblock A weighted norm inequality for singular integrals.
\newblock {\em Studia Math.}, 57(1):97--101, 1976.

\bibitem{Cruz-Uribe_libro_MR2797562}
David~V. Cruz-Uribe, Jos{\'e}~Maria Martell, and Carlos P{\'e}rez.
\newblock {\em Weights, extrapolation and the theory of {R}ubio de {F}rancia},
  volume 215 of {\em Operator Theory: Advances and Applications}.
\newblock Birkh\"auser/Springer Basel AG, Basel, 2011.

\bibitem{DZ-HSH-99}
J.~Dziuba{\'n}ski and J.~Zienkiewicz.
\newblock Hardy spaces {$H\sp 1$} associated to {S}chr\"odinger operators with
  potential satisfying reverse {H}\"older inequality.
\newblock {\em Revista Matem\'atica Iberoamericana}, 15(2):279--296, 1999.

\bibitem{FeffermanSteinMR0284802}
C.~Fefferman and E.~M. Stein.
\newblock Some maximal inequalities.
\newblock {\em Amer. J. Math.}, 93:107--115, 1971.

\bibitem{MR3180934_stinga}
Tao Ma, Pablo~Ra{\'u}l Stinga, Jos{\'e}~L. Torrea, and Chao Zhang.
\newblock Regularity estimates in {H}\"older spaces for {S}chr\"odinger
  operators via a {$T1$} theorem.
\newblock {\em Ann. Mat. Pura Appl. (4)}, 193(2):561--589, 2014.

\bibitem{PerezWeightedNormMR1260114}
C.~P\'erez.
\newblock Weighted norm inequalities for singular integral operators.
\newblock {\em J. London Math. Soc. (2)}, 49(2):296--308, 1994.

\bibitem{shen}
Z.~Shen.
\newblock {$L\sp p$} estimates for {S}chr\"odinger operators with certain
  potentials.
\newblock {\em Ann. Inst. Fourier (Grenoble)}, 45(2):513--546, 1995.

\bibitem{wilson-weightednormMR972707}
J.~Michael Wilson.
\newblock Weighted norm inequalities for the continuous square function.
\newblock {\em Trans. Amer. Math. Soc.}, 314(2):661--692, 1989.

\end{thebibliography}

\end{document}